\documentclass[12pt]{amsart}
\pdfoutput=1
\usepackage[margin=1in]{geometry}
\usepackage{amsmath, amsthm, amsfonts, amssymb, mathrsfs, mathdots}
\usepackage{microtype}
\usepackage[inline]{enumitem}
\usepackage{fancyvrb, xstring}

\usepackage{hyperref}
\usepackage[dvipsnames]{xcolor}
\hypersetup{%
  bookmarks,
  pagebackref = true,
  destlabel = true,
  hypertexnames = false,
  colorlinks,
  linkcolor = Purple,
  citecolor = Blue,
  urlcolor = Blue,
}

\usepackage[mathscr]{euscript}

\usepackage{tikz, tikz-cd, ytableau, graphicx}
\usetikzlibrary{positioning,graphs,patterns}
\ytableausetup{centertableaux, boxsize=1em}

\numberwithin{equation}{subsection}
\newtheorem{theorem}[equation]{Theorem}

\newtheorem{proposition}[equation]{Proposition}
\newtheorem{lemma}[equation]{Lemma}
\newtheorem{corollary}[equation]{Corollary}
\newtheorem{conjecture}[equation]{Conjecture}

\theoremstyle{definition}
\newtheorem{rmk}[equation]{Remark}
\newenvironment{remark}[1][]{\begin{rmk}[#1] \pushQED{\qed}}{\popQED \end{rmk}}
\newtheorem{eg}[equation]{Example}
\newenvironment{example}[1][]{\begin{eg}[#1] \pushQED{\qed}}{\popQED \end{eg}}
\newtheorem{defn}[equation]{Definition}

\makeatletter
\@addtoreset{equation}{section}
\renewcommand{\thesubsection}{%
  \ifnum\c@subsection<1 \@arabic\c@section
  \else \thesection.\@arabic\c@subsection
  \fi
}
\makeatother

\makeatletter
\newcommand{\extp}{\@ifnextchar^\@extp{\@extp^{\,}}}
\def\@extp^#1{\mathop{\bigwedge\nolimits^{\!#1}}}
\makeatother

\newcommand{\bA}{\mathbf{A}}

\newcommand{\bC}{\mathbf{C}}

\newcommand{\sE}{\mathscr{E}}
\newcommand{\bF}{\mathbf{F}}

\newcommand{\bG}{\mathbf{G}}

\newcommand{\sG}{\mathscr{G}}

\newcommand{\rH}{\mathrm{H}}

\newcommand{\bK}{\mathbf{K}}

\newcommand{\bL}{\mathbf{L}}

\newcommand{\sO}{\mathscr{O}}
\newcommand{\bP}{\mathbf{P}}

\newcommand{\sQ}{\mathscr{Q}}

\newcommand{\rR}{\mathrm{R}}
\newcommand{\sR}{\mathscr{R}}
\newcommand{\bS}{\mathbf{S}}

\newcommand{\fS}{\mathfrak{S}}

\newcommand{\sS}{\mathscr{S}}

\newcommand{\sT}{\mathscr{T}}

\newcommand{\sU}{\mathscr{U}}

\newcommand{\sV}{\mathscr{V}}

\newcommand{\sW}{\mathscr{W}}

\newcommand{\bZ}{\mathbf{Z}}

\renewcommand{\phi}{\varphi}
\renewcommand{\emptyset}{\varnothing}

\newcommand{\injects}{\hookrightarrow}
\newcommand{\surjects}{\twoheadrightarrow}
\renewcommand{\tilde}[1]{\widetilde{#1}}
\newcommand{\ol}[1]{\overline{#1}}

\newcommand{\stacks}[1]{%
  \cite%
  [Tag {\href{https://stacks.math.columbia.edu/tag/#1}{\textsc{#1}}}]%
  {stacks}
}

\makeatletter
\def\Ddots{\mathinner{\mkern1mu\raise\p@
\vbox{\kern7\p@\hbox{.}}\mkern2mu
\raise4\p@\hbox{.}\mkern2mu\raise7\p@\hbox{.}\mkern1mu}}
\makeatother

\DeclareMathOperator{\codim}{codim}

\DeclareMathOperator{\rad}{rad}

\DeclareMathOperator{\rank}{rank}

\DeclareMathOperator{\End}{End}

\DeclareMathOperator{\Sym}{Sym}

\DeclareMathOperator{\Tor}{Tor}

\DeclareMathOperator{\Spec}{Spec}

\DeclareMathOperator{\res}{res}

\DeclareMathOperator{\gr}{gr}

\newcommand{\GL}{\mathbf{GL}}

\newcommand{\Sp}{\mathbf{Sp}}

\newcommand{\Gr}{\mathbf{Gr}}
\newcommand{\Flag}{\mathbf{Flag}}
\newcommand{\IFlag}{\mathbf{IFlag}}
\newcommand{\IGr}{\mathbf{IGr}}
\newcommand{\CoIGr}{\mathbf{CoIGr}}
\newcommand{\OGr}{\mathbf{OGr}}

\newcommand\inlinedef[1]{\textbf{#1}}

\def\R{\mathscr{R}} % tautological sub bundle
\def\Q{\mathscr{Q}} % tautological quotient bundle
\def\O{\mathscr{O}} % structure sheaf

\newcommand{\gl}{\mathfrak{gl}}   % general linear Lie algebra
\renewcommand{\sp}{\mathfrak{sp}} % symplectic Lie algebra
\newcommand{\so}{\mathfrak{so}}   % orthogonal Lie algebra
\newcommand{\LR}[2]{\ensuremath{c^{#1}_{#2}}}  % Littlewood--Richardson coefficients

\newcommand\LWA[3]{(#1; #2)(-#3)}

\newcommand\WA[2]{\LWA{0}{#1}{#2}}

\newcommand\Hxi[3]{\rH^{#1}\wedge^{#2}(#3)}

\DeclareMathOperator{\Span}{span}
\DeclareMathOperator{\Inv}{inv}
\DeclareMathOperator{\Neg}{neg}
\DeclareMathOperator{\Nsp}{nsp}

\DeclareMathOperator{\nnormal}{nonnormal}
\DeclareMathOperator{\sing}{sing}

\DeclareMathOperator{\Kal}{\mathbf{Kal}}       % Kalman variety
\DeclareMathOperator{\IKal}{\mathbf{IKal}}     % Isotropic Kalman variety
\DeclareMathOperator{\CoIKal}{\mathbf{CoIKal}} % Coisotropic Kalman variety
\DeclareMathOperator{\OKal}{\mathbf{OKal}}     % Orthogonal Kalman variety

\begin{document}
\title{Syzygies of Isotropic Kalman Varieties}
\author{Suhas Vadan Gondi} \email[Suhas Vadan Gondi]{sgondi@ucsd.edu}
\author{Sarah Kumar}       \email[Sarah Kumar]{sak018@ucsd.edu}
\author{Abhik Pal}         \email[Abhik Pal]{apal@ucsd.edu}
\address{%
  Department of Mathematics \\
  University of California San Diego \\
  La Jolla CA 92093 \\
  USA}

\begin{abstract}
  Let $L$ be a subspace of a complex vector space $V$ and fix $s \leq \dim{L}$.
  The (type A) Kalman variety consists of all endomorphisms of $V$ that have an $s$-dimensional invariant subspace in $L$.
  We introduce a generalization where $V$ and $L$ are symplectic vector spaces.
  We fix an isotropic subspace $W \subseteq V$ satisfying $W^\perp = W \oplus L$.
  The isotropic (type C) Kalman variety consists of symplectic morphisms of $V$ that have an invariant coisotropic subspace of a prescribed dimension inside $W^\perp$.
  We are mainly interested in studying the Lagrangian case.
  In type C, we prove analogues of results known for type A Kalman varieties; in particular, we determine the defining equations, compute geometric invariants, and analyze their singularities.
  We conjecture the existence of a long exact sequence relating the structure sheaves.
  Based on the results in the symplectic case, we describe Kalman variety analogues with respect to endomorphisms of odd orthogonal (type B) and even orthogonal (type D) vector spaces.
\end{abstract}
\maketitle

\section{Introduction}
A classical problem in linear algebra is to understand the invariant subspaces of an endomorphism of a vector space.
If $V$ is a complex $n$-dimensional vector space and $L \subseteq V$ a $d$-dimensional subspace then the space of all endomorphisms with an $s$-dimensional invariant subspace in $L$ is the algebaic subset
\begin{equation*}
  \Kal(s, d, n) = \left\{ \varphi \in \End(V) \;\vert\; \exists U \subseteq L, \dim{U} = s, \varphi(U) \subseteq U \right\}
\end{equation*}
of $\End(V)$ known as the (type A) Kalman variety.
The study of the geometric properties of $\Kal(s, d, n)$ in \cite{os2012:matrices} was motivated by Kalman's observability criteria in control theory \cite{kal1960:optimalcontrol}; recent work on Kalman varieties has focused on determining their minimal generating equations \cite{sam2012:equations,hua2020:equations} and studying generalizations \cite{os2022:tensors,ssv2023:degrees,ssw2025:nonlinear}.
The main goal of this paper is to generalize Kalman varieties when $V$ and $L$ are symplectic vector spaces.

Let $V$ be a symplectic vector space of dimension $2(n+r)$, $W \subseteq V$ an $r$-dimensional isotropic subspace, and $L \subseteq V$ a $2n$-dimensional symplectic subspace satisfying $L \oplus W = W^\perp$.
The isotropic Kalman variety $\IKal(m, r, n) \subseteq \sp(V)$ is defined by
\begin{align*}
  \IKal(m, r, n)
  &= \{ \varphi \in \sp(V) \;|\; \exists\, \widetilde U \subseteq W^\perp \text{ coisotropic}, \dim{\widetilde U} = 2n - m + r, \varphi(\widetilde U) \subseteq \widetilde U \} \\
  &= \{ \varphi \in \sp(V)\;|\; \exists\, U \supseteq W \text{ isotropic}, \dim{U} = m+r, \varphi(U) \subseteq U \}.
\end{align*}
The equivalence of the two stated definitions is obtained by taking symplectic complements.
The isotropic Kalman variety thus consists of symplectic endomorphisms with an invariant coisotropic subspace $\widetilde U$ in a fixed coisotropic subspace $W^\perp$.
The space of endomorphisms with invariant \emph{isotropic} subspaces inside a fixed \emph{isotropic} subspace also defines an algebraic set in $\sp(V)$; our methods are not directly applicable in that setup and we do not study this variant here.
Throughout this paper, we fix vector spaces $V$, $W$, and $L$ and the parameters $r$ and $n$; we denote $\IKal(m, r, n)$ by $\IKal(m)$.

In this paper, we establish analogues of two important results known in the type A case.
Following Ottaviani--Sturmfels \cite{os2012:matrices}, we determine defining equations for isotropic Kalman varieties, and following Sam \cite{sam2012:equations} and Huang \cite{hua2020:equations}, we show that singular loci of isotropic Kalman varieties exhibit the unexpected ``nesting'' behavior known in type A.

In \cite{os2012:matrices}, Ottaviani--Sturmfels show that the equations that set theoretically define $\Kal(s, d, n)$ are obtained by taking the $(d - s + 1) \times (d - s + 1)$ minors of the reduced Kalman matrix $\Theta^\bA(\varphi) = \begin{bmatrix} C & CA & \dots & CA^{d - 1} \end{bmatrix}^\intercal$ constructed by writing a matrix $\varphi \in \End(V)$ as a block matrix $\varphi = \left[\begin{smallmatrix} A & B \\ C & D \end{smallmatrix}\right]$ with respect to the decomposition $V=L\oplus V/L$.
In the symplectic case, we first write a generic symplectic endomorphism $\varphi \in \sp(V)$ as a block matrix
\begin{equation*}
  \varphi =
  \begin{bmatrix}
    {A_0} & {A_1} & {B_0} \\
    {A_2} & {A_3} & {B_1} \\
    {C_0} & {C_1} & {D_0}
  \end{bmatrix}
\end{equation*}
with respect to the decomposition $V = L \oplus W \oplus W^\ast$ and then define the type C reduced Kalman matrix
\begin{equation*}
  \Theta
  = \Theta^{\bC}(\varphi)
  = \begin{bmatrix}
    C_0 & C_0 A_0 & \dots & C_0 A_0^{2n - 1}
  \end{bmatrix}^\intercal
\end{equation*}
associated to $\varphi$.
We prove an analogue of Ottaviani--Sturmfels's result.
\begin{theorem}[Theorem~\ref{t:defn-eq}]
  The variety $\IKal(m)$ is set theoretically defined by the vanishing of the $(m + 1) \times (m + 1)$ minors of $\Theta^{\bC}(\varphi)$ along with the equations $C_1=0$ and $C_0 A_0^k A_1=0$ for all $0 \leq k \leq 2n - 1$.
\end{theorem}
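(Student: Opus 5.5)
The plan is to unwind the second description of $\IKal(m)$ in terms of the blocks of $\varphi$, and so reduce the statement to linear algebra on $L$ with the operator $A_0$.

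\emph{Step 1: reduce to a subspace of $L$.} Any isotropic $U \supseteq W$ lies in $W^\perp = L \oplus W$. Hence $U = W \oplus U'$ with $U' = U \cap L$ an isotropic subspace of $L$ of dimension $m$. Conversely, every such $U'$ gives such a $U$. Read off from the block matrix (columns indexed by $L, W, W^\ast$), the condition $\varphi(U) \subseteq U$ is equivalent to:
\begin{itemize}[label={}]
\item $\varphi(W) \subseteq U$, which means $C_1 = 0$ and $A_1(W) \subseteq U'$;
\item $\varphi(U') \subseteq U$, which means $C_0 U' = 0$ and $A_0 U' \subseteq U'$.
\end{itemize}
So $\varphi \in \IKal(m)$ if and only if $C_1 = 0$ and there is an isotropic $A_0$-invariant $U'$ of dimension $m$ with $\operatorname{im} A_1 \subseteq U' \subseteq \ker C_0$.

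\emph{Step 2: use the symplectic condition on $\varphi$.} We evaluate $\omega(\varphi x, y) + \omega(x, \varphi y) = 0$ on suitable pairs, using that $L$ is orthogonal to $W \oplus W^\ast$ and that $\omega$ pairs $W$ with $W^\ast$.
\begin{itemize}[label={}]
\item For $x, y \in L$ this gives $A_0 \in \sp(L)$.
\item For $x \in L$ and $w \in W$ it gives $\omega(C_0 x, w) + \omega(x, A_1 w) = 0$. Thus $A_1$ is, up to sign, the adjoint of $C_0$, and $\operatorname{im} A_1 = (\ker C_0)^{\perp_L}$.
\end{itemize}
Let $K = \ker \Theta = \bigcap_k \ker(C_0 A_0^k)$. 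By Cayley--Hamilton, $K$ is the largest $A_0$-invariant subspace of $\ker C_0$. Since $A_0 \in \sp(L)$, the complement $K^{\perp}$ is the smallest $A_0$-invariant subspace containing $(\ker C_0)^\perp = \operatorname{im} A_1$, namely $\sum_k A_0^k \operatorname{im} A_1$.

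This yields two translations. First, the equations $C_0 A_0^k A_1 = 0$ for $0 \le k \le 2n-1$ say exactly that $A_0^k \operatorname{im} A_1 \subseteq \ker C_0$ for all $k$. That is, $K^\perp \subseteq K$, so $K$ is coisotropic in $L$. Second, the vanishing of the $(m+1)\times(m+1)$ minors says $\operatorname{rank} \Theta \le m$, i.e.\ $\dim K^\perp \le m$.

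\emph{Step 3: necessity.} Suppose $U'$ exists as in Step 1. Any $A_0$-invariant $U' \subseteq \ker C_0$ lies in $K$. Any $A_0$-invariant $U' \supseteq \operatorname{im} A_1$ contains $K^\perp$. Since $U'$ is isotropic, $K^\perp \subseteq U' \subseteq U'^\perp \subseteq K$, so both sets of equations hold, together with $C_1 = 0$.

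\emph{Step 4: sufficiency, the main point.} Assume the equations hold. Then $K/K^\perp$ is a symplectic space of dimension $2(n - \dim K^\perp)$ with an induced operator $\bar A_0 \in \sp(K/K^\perp)$. We need a $\bar A_0$-invariant isotropic subspace of dimension $m - \dim K^\perp$, which is at most $n - \dim K^\perp$ using $m \le n$. Its preimage is the desired $U'$: it is isotropic because $K^\perp$ is orthogonal to $K$.

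The key lemma is that every element of $\sp(2k)$ has an invariant isotropic flag of every length up to $k$. The proof is by induction: take an eigenvector $v$; the line $\langle v \rangle$ is isotropic and invariant; then $v^\perp$ is invariant, and $v^\perp / \langle v\rangle$ is symplectic with an induced operator in $\sp(2k-2)$, so we recurse.

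The main care needed is in Step 2: correctly identifying $K^\perp$ with the $A_0$-span of $\operatorname{im} A_1$ via the adjoint relation, and tracking the sign conventions of the chosen symplectic basis.
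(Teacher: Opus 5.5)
Your proof is correct. Its skeleton matches the paper's: the kernel of a Kalman matrix, its radical, and the existence of invariant isotropic subspaces from Lemma~\ref{l:sg-lite}. The mechanism, however, is different.

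The paper works with $\varphi$ on all of $W^\perp=L\oplus W$. It forms the type A matrix $\Theta'$ of $\varphi|_{L\oplus W}$ and quotes Ottaviani--Sturmfels for the $\varphi$-invariance of $\ker\Theta'$. It then checks by block multiplication that $C_1=0$ and $C_0A_0^kA_1=0$ amount to $W\subseteq\ker\Theta'$, equivalently $\ker\Theta'=\ker\Theta\oplus W$. Finally, it either stops at $\rad(\ker\Theta)$ (when that has dimension $m$), or applies Lemma~\ref{l:sg-co-iso} in $V$ to $\rad(\ker\Theta)^\perp\oplus W$ and takes complements.

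You instead reduce to $A_0\in\sp(L)$ and use the adjoint relation $\omega(C_0x,w)+\omega(x,A_1w)=0$, which the paper never exploits. This relation shows that $(\ker\Theta)^\perp$ is the $A_0$-span of $\operatorname{im}A_1$. Consequently the equations $C_0A_0^kA_1=0$ say exactly that $K=\ker\Theta$ is coisotropic in $L$ (whence $\rad K=K^\perp$, which is implicit in the paper), and the minors say $\dim K^\perp\le m$.

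Your route buys three things:
\begin{itemize}
\item a self-contained argument with no case split;
\item an explicit proof of the necessity direction, which the paper's proof leaves implicit;
\item an immediate proof of Corollary~\ref{c:defn-eq:lagrangian}, since $K^\perp\subseteq K$ already forces $\operatorname{rank}\Theta=\dim K^\perp\le n$, so the minors are redundant when $m=n$.
\end{itemize}

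The price is that you need the normalization $W^\ast\perp L$ to get $A_0\in\sp(L)$ and the adjoint relation. The paper's symplectic-basis decomposition provides this, and you rightly flag it. The paper's argument, by contrast, uses only that $\varphi$ preserves the form on $V$ (to take complements of invariant subspaces), not any relation among the blocks. So it does not depend on the choice of the complement $W^\ast$.
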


In \cite{sam2012:equations} and \cite{hua2020:equations}, Sam and Huang respectively improve on Ottaviani--Sturmfels's result and obtain the radical ideal generators of Kalman varieties.

Sam and Huang also discovered an unexpected relationship between the singular and nonnormal loci of Kalman varieties and showed that $\nnormal(\Kal(s, d, n)) = \Kal(s + 1, d, n)$.
We prove that the analogue of this holds in the isotropic case.

\begin{theorem}[Theorem~\ref{t:nonnormal}]
  The nonnormal locus of $\IKal(m)$ coincides with the singular locus and we have
  \begin{equation*}
    \nnormal(\IKal(m))
    = \sing(\IKal(m))
    = \IKal(m - 1).
  \end{equation*}
\end{theorem}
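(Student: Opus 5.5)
I will follow the geometric technique (Kempf--Weyman) approach that Sam used in type A. The plan rests on a desingularization of $\IKal(m)$. Let $X = \IGr(m, L)$ parametrize $m$-dimensional isotropic subspaces $U_0 \subseteq L$, so that $U = W \oplus U_0$ ranges over isotropic subspaces of dimension $m+r$ containing $W$. Over $X$, consider the subbundle
\[
  \eta = \{(U, \varphi) : \varphi(U) \subseteq U\} \subseteq X \times \sp(V).
\]
The projection $q \colon \eta \to \sp(V)$ has image $\IKal(m)$.

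\textbf{Birationality.} I will show that $q$ is birational onto its image. A generic $\varphi \in \IKal(m)$ has a unique invariant isotropic $U \supseteq W$ of dimension $m+r$. By Theorem~\ref{t:defn-eq}, such a $\varphi$ has $\Theta^{\bC}(\varphi)$ of rank exactly $m$, and the kernel data recover $U_0$. This also gives $\dim \IKal(m) = \dim \eta$.

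\textbf{Normality and rational singularities of the good part.} I will compute $\rR q_*\sO_\eta$ by the Kempf--Weyman method, using the presentation $\eta = \mathrm{Tot}(\mathscr S)$ with quotient $\xi = \sp(V)/\mathscr S$. Here $\xi$ decomposes as $\Hom(U, V/U)$ modulo the symplectic symmetry, built from $\R$, $\Q$ on the Lagrangian-type Grassmannian, $W$, and $W^\ast$. The terms $\rH^i(X, \extp^j \xi)$ are evaluated via Bott's theorem for $\Sp(L)$.

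The key expected outcome is the following. The pushforward $q_*\sO_\eta$ is the normalization $\tilde{\IKal(m)}$, and the quotient sheaf $q_*\sO_\eta / \sO_{\IKal(m)}$ is supported exactly on $\IKal(m-1)$. This mirrors Sam's exact sequence in type A, and the conjectured long exact sequence of structure sheaves mentioned in the abstract suggests the same shape. It gives $\nnormal(\IKal(m)) = \IKal(m-1)$.

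\textbf{Two inclusions.} For the inclusion $\nnormal \subseteq \IKal(m-1)$, I argue that off $\IKal(m-1)$ the fibers of $q$ are single reduced points. Indeed, two distinct invariant $U, U'$ would force $U \cap U'$ to be an invariant isotropic subspace of dimension less than $m+r$ containing $W$, hence $\varphi \in \IKal(m-1)$. A cheaper alternative is to observe that rank exactly $m$ of $\Theta$ makes $q$ an isomorphism locally. Either way, $q$ is a local isomorphism from a smooth variety there, so $\IKal(m) \setminus \IKal(m-1)$ is smooth. This gives
\[
  \sing \subseteq \IKal(m-1) \quad\text{and}\quad \nnormal \subseteq \sing.
\]
For the reverse inclusion $\IKal(m-1) \subseteq \nnormal$, it suffices by closedness and irreducibility of $\IKal(m-1)$ to find one point $\varphi \in \IKal(m-1)$ at which $\IKal(m)$ is not normal. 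Following Huang, I would pick $\varphi$ generic in $\IKal(m-1)$, whose invariant $(m-1+r)$-space $U'$ extends to a positive-dimensional (e.g.\ $\bP^1$-) family, or to several isolated choices, of invariant $U$. If the fiber $q^{-1}(\varphi)$ is disconnected, then Zariski's main theorem shows $\IKal(m)$ is not normal at $\varphi$. Concretely, I will take $\varphi$ restricted to $L \cap U'^{\perp}/U'$ to have two distinct isotropic invariant lines, giving a fiber consisting of at least two points, hence unibranch failure.

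\textbf{Main obstacle.} The hard step is this last construction: exhibiting, inside the symplectic constraint, a generic point of $\IKal(m-1)$ with disconnected fiber. Eigenvectors of a symplectic map come in pairs $\lambda, -\lambda$ that pair nontrivially, so I must check that two isotropic invariant lines exist, coming from distinct eigenvalues not negatives of each other, and that this configuration is generic in $\IKal(m-1)$. This requires $\dim(L \cap U'^\perp/U') = 2(n-m+1) \geq 4$; the edge case $m = n$ needs separate treatment, possibly via the Bott computation showing the cokernel $q_*\sO_\eta/\sO$ is nonzero there.
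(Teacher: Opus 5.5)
Your two-inclusion skeleton matches the paper's. You show $\IKal(m)\setminus\IKal(m-1)$ is smooth because $\pi'$ is an isomorphism there, and you apply Zariski's main theorem to a disconnected fiber over a general point of $\IKal(m-1)$.

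For the easy inclusion, your ``cheaper alternative'' is the justification that actually works. On $\IKal(m)\setminus\IKal(m-1)$ the matrix $\Theta$ has rank exactly $m$, so $\varphi\mapsto(\ker\Theta)^{\perp}$ gives an inverse morphism. Singleton fibers alone would not suffice: a bijective proper birational map onto a target not yet known to be normal need not be an isomorphism, and you assert reducedness of the fibers without proof.

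\textbf{The gap is in the hard inclusion.} Having ``at least two points'' in $q^{-1}(\varphi)$ gives nothing. Zariski's theorem needs the fiber to be \emph{disconnected}, and you yourself allow a $\bP^1$-family, which is connected. The missing step is finiteness of the fiber:
\begin{itemize}
\item Take $\varphi\in\IKal(m-1)\setminus\IKal(m-2)$ with an invariant isotropic $T\supseteq W$ of dimension $m-1+r$.
\item Every invariant isotropic $U\supseteq W$ of dimension $m+r$ must contain $T$. Otherwise $U\cap T$ is invariant, isotropic, contains $W$, and has dimension at most $m-2+r$, which puts $\varphi\in\IKal(m-2)$ by the chain of Remark~\ref{r:ikal-inclusion}.
\item Hence the fiber is exactly the set of $\ol\varphi$-stable lines in $T^\perp/T$, where $\ol\varphi\in\sp(T^\perp/T)$. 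When $\ol\varphi$ has distinct eigenvalues, this is a set of $2(n-m+1)\geq 2$ isolated points, hence disconnected.
\end{itemize}
Distinct eigenvalues is a nonempty open condition on $\IKal(m-1)\setminus\IKal(m-2)$. It is open because $T$ varies algebraically there, by your own constant-rank argument at level $m-1$. It is nonempty because the block-diagonal element of Proposition~\ref{p:birational} with $P$ regular semisimple realizes it. You need this density, not ``one point'': a single nonnormal point does not force the closed set $\nnormal(\IKal(m))$ to contain all of $\IKal(m-1)$.

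The obstacle you single out is illusory, and believing in it leaves the Lagrangian case $m=n$ unproved. Every line in a symplectic space is isotropic, since $\langle v,v\rangle=0$. So $T\oplus\Span(v)$ is isotropic for \emph{every} eigenvector $v$ of $\ol\varphi$. Whether eigenvectors for $\lambda$ and $-\lambda$ pair nontrivially is irrelevant, because each point of the fiber uses a single line. No hypothesis $\dim T^\perp/T\geq 4$ is needed. For $m=n$, $\ol\varphi\in\sp(T^\perp/T)$ with $\dim T^\perp/T=2$ generically has eigenvalues $\pm\lambda$ with $\lambda\neq 0$, and the fiber consists of exactly two points, so no Bott computation is required.

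Finally, drop the Kempf--Weyman paragraph. Since $q$ is proper and birational from a smooth variety, $q_\ast\sO_\eta$ is the normalization. So ``$q_\ast\sO_\eta/\sO_{\IKal(m)}$ is supported exactly on $\IKal(m-1)$'' merely restates the theorem; it is not a step toward proving it.
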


Using the two theorems above, we compute the minimal generators for the Lagrangian Kalman variety $\IKal(1)$ when $\dim{L} = 2$.
Proving this in general is not so straightforward.
However, based on the theorems above and \cite[Theorem~3.1]{hua2020:equations}, we conjecture the existence of a long exact sequence relating the free resolutions of arbitrary isotropic Kalman varieties and their normalizations.
Since the case $m = n$ corresponds to a Lagrangian Kalman variety, one consequence of our conjecture would be a complete description of all syzygies ($\Tor$-groups) of $\IKal(n)$.

\begin{conjecture}[{Conjecture~\ref{conj:ikal-les}}]
  Let $\sO_m$ denote the structure sheaf of $\IKal(m)$ and $\widetilde\sO_m$ that of its normalization.
  Then there exists a long exact sequence
  \begin{equation*}
    0 \to
    \O_n \to
    \widetilde\O_n \to
    \widetilde\O_{n - 1}(-1) \to
    \dots \to
    \widetilde\O_m(-(n - m)^2) \to
    \dots \to
    \widetilde\O_0(- n^2) \to
    0
  \end{equation*}
  of structure sheaves.
\end{conjecture}

The main techniques we use involve analyzing the syzygies of varieties in terms of cohomology of vector bundles.
In general, these techniques are known to work very well when the varieties are normal \cite[Theorem~5.1.3]{Wey:CohomVect}; in the study of nilpotent orbit closures of type A, for example, the orbit closures are normal and these methods give complete information about their syzygies \cite[Chapter~8]{Wey:CohomVect}.
For every simple Lie algebra not of type A, there always exists a nonnormal orbit closure and very little is known about their equations and free resolutions.
Since isotropic Kalman varieties are not normal in general, they form an important new class of examples for studying this technique in type C.

\medskip

We now summarize our main approach to generalizing the definition to the symplectic case.
Compta, Helmke, Pe\~{n}a, and Puerta observe in \cite{chpp2006:simultaneous} that the space of endormorphisms and their invariant subspaces can be identified with vector bundles over a Grassmannian.
In particular, we may define Kalman varieties purely in terms of vector bundles over Grassmannians.
Let $X = \Gr(s, L)$ denote the Grassmannian of $s$-dimensional subspaces of $L$.
Recall that the varieties
\begin{align}
  \label{def:R}
  \sR &= \{ (v, R) \;|\; R \in \Gr(s, L), v \in R \} \subseteq L \times X,  \\
  \label{def:Q}
  \sQ &= \{ (v, R) \;|\; R \in \Gr(s, L), v \in L / R \} = (L \times X) / \sR
\end{align}
along with the projection morphisms to $X$ are respectively called the tautological sub- and quotient bundles on $X$.
In terms of these bundles, the cotangent bundle $\Omega$ is isomorphic to $\sR \otimes \sQ^\ast$.
If $L \subseteq V$, then there is a natural inclusion $X \hookrightarrow \Gr(s, V)$ and the cotangent bundle $\Omega_0 \cong \sR_0 \otimes \sQ_0^*$  of $\Gr(s, V)$ restricts to
\begin{equation*}
  \xi = \sR \otimes (\sQ^\ast  \oplus (V / L)^\ast) = \Omega \oplus \sR \otimes (V / L)^\ast
\end{equation*}
on $X$.
Let $\sO_X$ denote the structure sheaf of $X$ and note that $\xi$ is a subbundle of $\sO_X \otimes \End(V)$.
If $\eta = \sO_X \otimes \End(V) / \xi$ is the quotient and $Z$ is the total space of $\eta^\ast$, then there is a diagram
\begin{equation*}
  \begin{tikzcd}
    Z \ar["\subseteq", hook]{r} \ar["\pi' = \pi\vert_Z"]{d} & \Gr(s, L) \times \End(V) \ar["\pi_1"]{r} \ar["\pi = \pi_2"]{d} & \Gr(s, L) \\
    Y\ar["\subseteq", hook]{r} & \End(V) &
  \end{tikzcd}
\end{equation*}
where the $\pi_i$ denote the projection morphisms.
When the projection $\pi$ is restricted to $Z$, the Kalman variety naturally arises as the subvariety $Y = \pi(Z) = \Kal(s, L, V)$  of $\End(V)$.
As an $\sO_{X \times \End(V)}$-module, the structure sheaf $\sO_Z$ of $Z$ is resolved by the Koszul complex
\begin{equation*}
  \bK(\xi)_\bullet \colon
  \quad
  0 \to
  \bigwedge^t (\pi_1^\ast \xi)
  \to \dots
  \to \bigwedge^2 (\pi_1^\ast \xi)
  \to \pi_1^\ast\xi
  \to \O_{X \times \End(V)}
\end{equation*}
on $\xi$. 
By cohomology and base change, the pushforward of $\bK(\xi)_\bullet$ along $\pi$ can be identified with cohomology groups $\rH^\bullet(X, \bK(\xi)_\bullet)$.
When $\pi'$ is birational, these groups in fact determine a free resolution of the normalization of $\Kal(s, d, n)$ as a $\Sym(\End(V)^\ast)$-module.

If $L$ is a symplectic vector space of dimension $2n$ and $m \leq n$, then the space of all $m$-dimensional isotropic subspaces of $L$ is a subvariety $\IGr(m, L)$ of $\Gr(m, L)$ called the isotropic Grassmannian.
If $V$ is a symplectic vector space of dimension $2(n + r)$ containing $L$ and $W \subseteq V$ is a fixed isotropic subspace satisfying $W^\perp=W\oplus L$ and $\dim{W} = r$, then there is the embedding $\IGr(m, L) \hookrightarrow \IGr(m + r, V)$ of isotropic Grassmannians obtained by $R \mapsto R \oplus W$.
The main class of examples we will consider is the case where the invariant subspaces are Lagrangian.
In this case, $m = n$ and the cotangent bundle on $\IGr(n + r, V)$ restricts to $\xi = \Sym^2(\sR \oplus \sW)$ on $\IGr(n, L)$.
Carrying out the construction above gives us the expected set theoretic description
\begin{equation*}
  \IKal(n) = \left\{ \varphi \in \sp(V) \;|\; \exists U \in \IGr(n, L), \varphi(U \oplus W) \subseteq U \oplus W \right\}.
\end{equation*}
of the Lagrangian Kalman variety.
Studying singular loci in this case naturally leads to the general class of isotropic Kalman varieties.
However, for a general isotropic Kalman variety, the bundle $\xi$ fails to be semisimple.
When $m < n$, the cohomology calculations and the combinatorics involved get quite complicated; we describe this construction in detail in \S\ref{sec:restriction-defn}.

\medskip

Historically, the rank condition on $\Theta^{\bA}(\varphi)$ was studied in the context of control theory where it is known as Kalman's observability condition \cite{kal1960:optimalcontrol}; it also appears in the study of certain families of PDEs \cite{sk1985:systems,csz2024:large}.
Algebras related to the Kalman variety were known to algebraic geometers as early as the 1980s: irreducibility and dimension calculations when $\Theta^\bA(\varphi)$ has maximal rank appear in Helmke's PhD thesis \cite{hel1982:zur} and an algebraic proof of the rank condition when $s = 1$ appear in \cite{she1984:common}.
Osetinskiĭ, Vasil'ev, and Vainsteĭn studied the invariant theory of the $\GL(V)$-action on algebras related to Kalman varieties in \cite{ovv2006:geometric} and
Compta, Helmke, Pe\~{n}a, and Puerta realized the space of endomorphisms and their invariant subspaces as vector bundles over Grassmannians in \cite{chpp2006:simultaneous}.

In \cite{os2012:matrices}, Ottaviani and Sturmfels first carried out a systematic study of the algebraic geometry of Kalman varieties and posed the general problem of calculating the prime ideal generators.
In  \cite{sam2012:equations}, Sam discovered a long exact sequence relating the coordinate rings of various Kalman varieties and their normalizations; using this long exact sequence he was able to determine the prime ideal generators when $\dim{L} = 3$.
Huang showed in \cite{hua2020:equations} that the exact sequence exists for all values of $\dim{L}$ and was able to determine the prime ideal generators and syzygies of the Kalman variety.

Analogues of Kalman varieties for higher order tensors have been studied by Ottaviani--Shahidi and  Shahidi--Sodomaco--Ventura in  \cite{os2022:tensors} and \cite{ssv2023:degrees}.
Salizzoni, Sodomaco, and Weigert introduced non-linear generalization of Kalman varieties in \cite{ssw2025:nonlinear}; they fix an irreducible projective variety $X \subseteq \bP(V)$ and consider the subset of $\End(V)$ with invariant subspaces in $X$. This paper presents yet another variation on these themes.

\subsection*{Outline}
\begin{enumerate}
\item[\S\ref{sec:prelim}]
  We summarize the preliminary material used in the rest of the paper.
  This section contains no new results.
  We review  the relevant representation theoretic formulae for Schur functors and present an overview of Weyman's geometric method for calculating syzygies.
  We also state the two important special cases (for isotropic Grassmannians and relative flag varieties) of the Borel--Weil--Bott theorem used throughout this paper.

\item[\S\ref{sec:restriction-defn}]
  We describe a construction of Kalman varieties using vector bundles over Grassmannians.
  This description is then used to define the isotropic Kalman variety $\IKal(m)$.

\item[\S\ref{sec:geom-props}]
  This section focuses on proving numerous geometric properties of isotropic Kalman varieties.
  We calculate the defining equations and show that isotropic Kalman varieties are defined by minors of a symplectic analogue of the reduced Kalman matrix (Theorem~\ref{t:defn-eq}).
  One of the main results in this section is the identification of the singular and nonnormal loci of $\IKal(m)$ with $\IKal(m - 1)$ (Theorem~\ref{t:nonnormal}).
  We show that normalizations of isotropic Kalman varieties have rational singularities (Theorem~\ref{t:norm-ikal-rational-sing}), describe a desingularization (Proposition~\ref{p:birational}), and calculate invariants such as dimension and codimension (Proposition~\ref{p:codim}).
  Using results on rational singularities and the desingularization, we construct free resolutions (Proposition~\ref{p:norm-ikal-res}).

\item[\S\ref{sec:Lagrangian}]
  We use the tools developed in previous sections to study Lagrangian Kalman varieties.
  We compute the geometric invariants and the defining equations in this special case (Corollary~\ref{c:defn-eq:lagrangian}).
  Our main result in this section is the calculation of the minimal generators for Lagrangian Kalman varieties when $\dim{L} = 2$ (Theorem~\ref{t:dim-2-min-eq}).

\item[\S\ref{sec:exact-sequence}]
  We state out main conjecture on the existence of a long exact sequence involving structure sheaves of normalizations of isotropic Kalman varieties (Conjecture~\ref{conj:ikal-les}).
  We present general heuristics for proving such a result and state some results in support of the conjecture.

\item[\S\ref{sec:ortho}]
  We briefly describe how the results of the rest of paper apply in the odd and even orthogonal case.
  This section only summarizes the final results as all proofs can be easily adapted from the equivalent result in type C.
  We highlight any notable differences and state analogues of our main conjecture.
\end{enumerate}

\subsection*{Acknowledgments}
We thank Steven Sam for introducing us to this problem, for many helpful discussions, and for detailed feedback on various drafts of this paper.
The authors were partially supported by NSF grant DMS 2302149.

\nocite{sagemath,gs:macaulay2}

\section{Preliminaries}
\label{sec:prelim}

Unless noted otherwise, we work over an algebraically closed field of characteristic zero.
Without loss of generality, we take this field to be $\bC$.

\subsection{Kalman variety}
Let $V$ be a vector space with $\dim{V} = n$ and fix a $d$-dimensional subspace $L \subseteq V$.
The space of linear operators on $V$ will be denoted $\gl(V)$.
The \inlinedef{Kalman variety} is the space of all endormorphisms of $V$ that fix an $s$-dimensional subspace inside $L$:
\begin{equation}
  \Kal(s, L, V) =
  \Kal(s, d, n) =
  \left\{ \varphi \in \gl(V) \middle| \exists U \subseteq L, \dim{U} = s, \varphi(U) \subseteq U \right\}.
\end{equation}

The space above is an algebraic variety.
To describe its defining equations, first fix a basis of $V$ with respect to the decomposition $L \oplus (V / L)$.
We write any generic $\varphi \in \gl(V)$ as block matrix $\left[\begin{smallmatrix} A & B \\ C & D \end{smallmatrix}\right]$ with respect to this basis and define the \inlinedef{type A reduced Kalman matrix}
\begin{equation}
  \label{def:obs-mat:type-a}
  \Theta^\bA(\varphi) =
  \begin{bmatrix}
    C & C A & \dots & CA^{d - 1}
  \end{bmatrix}^\intercal.
\end{equation}
The zero locus of the $(d - s + 1) \times (d - s + 1)$ minors defines $\Kal(s, d, n)$ \cite[Theorem~4.5]{os2012:matrices}.
In general these equations are not minimal and a description of the minimal equations appears in \cite{hua2020:equations}.

\subsection{Partitions, Schur functors, and decomposition formulae}
A \inlinedef{partition} of $n$ is a weakly decreasing tuple $\lambda = (\lambda_1, \dots, \lambda_k)$ such that $\sum \lambda_i = n$; we write $\lambda \vdash n$ and $|\lambda| = n$.
The \inlinedef{length $l(\lambda)$} of a partition is the largest $i$ such that $\lambda_i \neq 0$.
If $\lambda$ has repeated entries, we use exponential notation to write $\lambda$; for example, a partition of the form $(a,a,b,b,b,c)$ is denoted $(a^2,b^3,c)$.
The \inlinedef{transpose partition $\lambda^\top$} to $\lambda$ is defined by $\lambda^\top_i = \#\{j \, | \lambda_j \geq i\}$.
If $\mu$ and $\lambda$ are partitions then $\mu \subseteq \lambda$ if and only if $\mu_i \leq \lambda_i$ for all $i$.
If $s$ is maximal such that $s \times s = (s^s) \subseteq \lambda$ then we say $\lambda$ has rank $s$ and contains a \inlinedef{Durfee square} of size $s$ \cite[\S 1.8]{Sta:EC1}.
Note that $s$ is the largest index satisfying $\lambda_s \geq s$.

Let $V$ be an $n$-dimensional complex vector space.
Partitions with at most $n$ rows parameterize the irreducible polynomial representations of $\gl(V)$ \cite[Chapter~6]{FH:RepresentationTheory}.
The representation corresponding to the partition $\lambda$ will be denoted $\bS_\lambda V$ where $\bS_\lambda$ is the Schur functor associated to $\lambda$; see \cite[Chapter~2]{Wey:CohomVect} for a definition.
Note that we have changed the notation such that $\bS_\lambda V$ in our notation corresponds to $\bL_{\lambda^\top}V$ in \cite[Chapter~2]{Wey:CohomVect}.
In particular, $\bS_{(1^d)} V \cong \bigwedge^d V$ and $\bS_{(d)} V \cong \mathrm{Sym}^d V$.
The representation dual to $\bS_\lambda V$ will be denoted $\bS_\lambda V^\ast = \bS_{-\lambda^\mathrm{op}}V$ where $-\lambda^{\mathrm{op}} = (-\lambda_n, \dots, -\lambda_1)$.
The functor $\bS_\lambda$ is compatible with base change and hence we can construct $\bS_\lambda \sV$ for any locally free sheaf $\sV$ on a scheme.

We end this subsection with a list of formulae involving Schur functors that will be necessary for our calculations.
All of the material here appears elsewhere, for example in \cite[Section~2.3]{Wey:CohomVect}, however we restate them here for reference and notational consistency.

\begin{proposition}
  Let $E$ and $F$ be vector spaces.
  There are $\GL(E) \times \GL(F)$ equivariant isomorphisms:
  \begin{gather}
    \label{p:schur-sum}
    \bS_{\lambda}(E \oplus F) = \bigoplus_{\mu \subseteq \lambda} \bigoplus_{|\nu| = |\lambda| + |\mu|} \LR{\nu}{\lambda, \mu} \bS_{\nu} E \otimes \bS_{\mu}F
    \allowdisplaybreaks \\
    \label{p:cauchy}
    \Sym^n (E \otimes F) =  \bigoplus_{|\lambda| = n} \bS_\lambda E \otimes \bS_\lambda F,
    \quad
    \bigwedge^n(E \otimes F) =  \bigoplus_{|\lambda| = n} \bS_\lambda E \otimes \bS_{\lambda^\top} F,
    \allowdisplaybreaks \\
    \label{p:lr-rule}
    \bS_\lambda E \otimes \bS_\mu E = \bigoplus_{|\nu| = |\lambda| + |\mu|} \LR{\nu}{\lambda,\mu} \bS_\nu E,
    \allowdisplaybreaks \\
    \label{p:wedge-sym}
    \bigwedge^m (\Sym^2E) = \bigoplus_{\lambda \in Q_1(2m)} \bS_\lambda E.
  \end{gather}
  The $\LR{\gamma}{\alpha, \beta}$ denote the Littlewood--Richardson coefficients and $Q_1(2m)$ consists of partitions
  \begin{equation*}
    \lambda = (d + 1 + \alpha_1, \dots, d + 1 + \alpha_s, \alpha^\top_1, \dots, \alpha^\top_d)
  \end{equation*}
  of $2m$ where $\alpha = (\alpha_1, \dots, \alpha_d)$ is a partition satisfying $\alpha_1^\top \le d$ for some $d \geq 0$.
\end{proposition}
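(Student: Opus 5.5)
These four identities are classical, and the plan is to derive each from characters, i.e.\ symmetric functions. In characteristic zero, finite-dimensional polynomial $\GL(E)\times\GL(F)$-representations are semisimple and determined by their characters. So it suffices to compare characters evaluated at torus elements $\mathrm{diag}(x_1,\dots,x_e)\times\mathrm{diag}(y_1,\dots,y_f)$. The character of $\bS_\lambda E$ is the Schur polynomial $s_\lambda(x)$, so each formula reduces to an identity in the ring of symmetric functions $\Lambda$. Equivariance is automatic, since both sides are $\GL(E)\times\GL(F)$-representations with equal characters.

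The identities are handled in turn.
\begin{itemize}
\item \eqref{p:lr-rule}: this is the definition of the Littlewood--Richardson coefficients, $s_\lambda s_\mu=\sum_\nu c^\nu_{\lambda\mu}s_\nu$.
\item \eqref{p:schur-sum}: the character of $\bS_\lambda(E\oplus F)$ is $s_\lambda(x,y)$. The coproduct formula $s_\lambda(x,y)=\sum_{\mu}s_{\lambda/\mu}(x)s_\mu(y)$, together with $s_{\lambda/\mu}=\sum_\nu c^\lambda_{\mu\nu}s_\nu$, gives the stated decomposition. The indexing is to be read with $|\nu|=|\lambda|-|\mu|$ and coefficient $c^\lambda_{\nu\mu}$; I would adopt this reading to fix the evident typo in the statement.
\item \eqref{p:cauchy}: I would use the Cauchy identity $\prod_{i,j}(1-x_iy_j)^{-1}=\sum_\lambda s_\lambda(x)s_\lambda(y)$ and the dual Cauchy identity $\prod(1+x_iy_j)=\sum_\lambda s_\lambda(x)s_{\lambda^\top}(y)$. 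Taking the homogeneous degree-$n$ parts gives the two formulas, since the left-hand products are exactly the graded characters of $\Sym^\bullet(E\otimes F)$ and $\bigwedge^\bullet(E\otimes F)$.
\end{itemize}
In each case, terms with $l(\lambda)>\dim E$ vanish on both sides, because $s_\lambda$ in $e$ variables is zero when $l(\lambda)>e$. So the identities hold for actual vector spaces and not merely in $\Lambda$.

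The main work is \eqref{p:wedge-sym}. Here I need the plethystic identity
\[
\prod_{i\le j}(1+x_ix_j)=\sum_{\lambda}s_\lambda(x),
\]
where $\lambda$ runs over partitions of the form $(\alpha_1+1,\dots,\alpha_d+1\mid\alpha_1,\dots,\alpha_d)$ in Frobenius notation, with $\alpha$ a strict partition. The left side is the graded character of $\bigwedge^\bullet(\Sym^2E)$. The cleanest route is to quote Macdonald, \emph{Symmetric Functions}, I.5 Ex.~9, or Weyman's Proposition 2.3.9.

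If a self-contained argument is wanted, I would instead do the following. Work in the Weyl character formula picture: multiply both sides by the Vandermonde $\prod_{i<j}(x_i-x_j)$. Then expand the left side as a Pfaffian-type alternant via the identity $\prod_{i<j}(x_i-x_j)(1+x_ix_j)\prod_i(1+x_i^2)$. Finally, match monomials with strictly decreasing exponents to the Frobenius shapes $(\alpha_i+1\mid\alpha_i)$. Rewriting the Frobenius condition as the explicit parametrization $\lambda=(d+1+\alpha_1,\dots,\alpha^\top_1,\dots)$ of $Q_1(2m)$ used in the statement is then routine bookkeeping: each hook has arm one longer than its leg.

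I expect the only real obstacle to be this combinatorial identification for \eqref{p:wedge-sym}, namely verifying that the paper's description of $Q_1(2m)$ agrees with the Frobenius-notation description. I would check it on small cases, such as $m=1,2$ giving $(2)$ and $(3,1)$, before trusting the indexing.
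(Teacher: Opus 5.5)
Your proposal is correct, but it argues differently from the paper. The paper's proof is only a list of citations to Weyman's book: Proposition~2.3.1(a) with Theorem~2.3.6, Corollary~2.3.3, Theorem~2.3.4, and Proposition~2.3.9(a). As I understand Weyman's treatment, these identities come from natural filtrations of the Schur functors themselves, and the direct sums are the characteristic-zero case.

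You instead use that polynomial representations over $\bC$ are semisimple and determined by their characters, and reduce everything to Schur-polynomial identities. This is self-contained and fully adequate for the paper's setting. It gives only abstract isomorphisms and no information in positive characteristic, and for \eqref{p:wedge-sym} you still rely on a reference.

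A concrete benefit of your route is that it forces you to check the indexing.
\begin{itemize}
\item Your repair $\bS_\lambda(E\oplus F)=\bigoplus_{\mu\subseteq\lambda}\bigoplus_{|\nu|=|\lambda|-|\mu|}c^{\lambda}_{\nu,\mu}\,\bS_\nu E\otimes\bS_\mu F$ of \eqref{p:schur-sum} is right. It is also the form the paper actually uses in Lemma~\ref{l:xi-summands}.
\item The $Q_1(2m)$ bookkeeping works once the garbled indices are read as $\lambda_i=d+1+\alpha_i$ for $i\le d$ and $\lambda_{d+k}=\alpha^\top_k$, with $l(\alpha)\le d$. The $i$-th diagonal hook then has leg $b_i=\alpha_i+d-i$ and arm $b_i+1$. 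The map $\alpha\mapsto b$ is the standard bijection onto sequences $b_1>\dots>b_d\ge 0$.
\end{itemize}

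Your optional self-contained argument for \eqref{p:wedge-sym} names a product rather than an identity. What you need is
\[
\prod_{i}(1+x_i^2)\prod_{i<j}(x_i-x_j)(1+x_ix_j)=\det\bigl(x_i^{e-j}+(-1)^{j-1}x_i^{e+j}\bigr)_{1\le i,j\le e}.
\]
This is the type C Weyl denominator identity after substituting $x\mapsto\sqrt{-1}\,x$; it is a determinant, not a Pfaffian. Expand over the set $S$ of columns that contribute $x_i^{e+j}$. The sign $(-1)^{\sum_{j\in S}(j-1)}$ cancels the sign of the sorting permutation. Hence each shape $(b_1+1,\dots,b_d+1\mid b_1,\dots,b_d)$ appears exactly once, with coefficient $+1$.
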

\begin{proof}
  See
  \cite[Proposition~2.3.1(a), Theorem~2.3.6]{Wey:CohomVect} for \eqref{p:schur-sum},
  \cite[Corollary~2.3.3]{Wey:CohomVect} for \eqref{p:cauchy},
  \cite[Theorem~2.3.4]{Wey:CohomVect} for \eqref{p:lr-rule},
  and \cite[Proposition~2.3.9(a)]{Wey:CohomVect} for \eqref{p:wedge-sym}.
\end{proof}

The decomposition~\eqref{p:cauchy} is called the Cauchy Rule.
Similar formulas also exist for $\bigwedge^m(\bigwedge^2 E)$ and  $\Sym^m(\Sym^2 E)$ see, respectively, \cite[Proposition~2.3.9(b)]{Wey:CohomVect} and \cite[Proposition~2.3.8]{Wey:CohomVect}.

\begin{remark}
  \label{r:q1-parts}
  The partitions appearing in $Q_1(2m)$ can be visualized as
  \begin{equation*}
    \begin{tikzpicture}
  \begin{scope}[cm={1, 0, 0, -1, (0, 0)}, scale=0.6]
    \draw
    (0, 2) node[left] {$\lambda = $}
    (1, 1) node {$d \times d$}
    (3.5, 1) node {$\alpha$}
    (1, 3) node {$\alpha^\top$}
    (2.25, -0.2) node[above] {\footnotesize 1}
    ;

    \begin{scope}[|-|, yshift=-5pt]
      \draw (2, 0) -- (2.5, 0);
    \end{scope}

    \draw
    (0, 0) rectangle (2, 2)
    (2, 0) rectangle (2.5, 2);

    % alpha
    \begin{scope}[scale=0.5, cm={1, 0, 0, 1, (5, 0)}]
      \draw
      (0, 0) -- (0, 4) -- (1, 4) -- (1, 3) -- (4, 3) -- (4, 0) -- (0, 0); 
    \end{scope}

    \begin{scope}[scale=0.5, cm={0, 1, 1, 0, (0, 4)}]
      \draw
      (0, 0) -- (0, 4) -- (1, 4) -- (1, 3) -- (4, 3) -- (4, 0) -- (0, 0); 
    \end{scope}
  \end{scope}
\end{tikzpicture}
%%% Local Variables:
%%% mode: LaTeX
%%% TeX-master: "../2025-gkp-isotropic-kalman"
%%% End:

  \end{equation*}
  and we have $|\lambda| = 2m = d^2 + d + 2|\alpha|$. Observe that $d$ is the size of the Durfee square of $\lambda$.
\end{remark}

\subsection{Representations of the symplectic group and branching}
Let $V$ be a vector space of dimension $2n$ with the standard basis $\{e_1, \dots, e_n, e_{-n}, \dots e_{-1}\}$ and consider a skew-symmetric nondegenerate bilinear form on $V$ generated by $\langle e_i, e_j \rangle = \delta_{i,-j}$ for $i > 0$.
A basis of $V$ that satisfies this property is called a \inlinedef{symplectic basis} and the bilinear form is defined by
\begin{equation}
  \label{def:symplectic-form}
  J = \begin{bmatrix} 0 & I^\tau \\ -I^\tau & 0 \end{bmatrix},
  \qquad
  I^\tau = \begin{bmatrix}  & & 1 \\ & \iddots & \\ 1 & & \end{bmatrix}.
\end{equation}
For $v, w \in V$, we have $\langle v, w \rangle = v^\intercal J w$.

Let $U \subseteq V$ be a subspace and define its \inlinedef{symplectic complement} as the subspace
\begin{equation*}
  U^\perp = \left\{ v \in V \mid \langle v, u \rangle = 0 \text{ for all } u \in U \right\}.
\end{equation*}
The subspace $U$ is called \inlinedef{isotropic} if $U \subseteq U^\perp$, \inlinedef{coisotropic} $U^\perp \subseteq U$, and \inlinedef{Lagrangian} if $U = U^\perp$.
If $U$ is Lagrangian then $\dim{U} = n$ is the maximum possible dimension of any isotropic subspace.

The symplectic group $\Sp(V)$ is the group of automorphisms of $V$ that preserve the form $J$ that is,
\begin{align*}
  \Sp(V)
  &= \{ g \in \GL(V) \mid g^\intercal J g = J\} \\
  &= \{ g \in \GL(V) \mid \langle gv ,  gw \rangle = \langle v, w \rangle \text{ for all } v,w \in V \}.
\end{align*}
The Lie algebra $\sp(V)$ consists of block matrices
\begin{equation}
  \label{eq:symplectic-blocks}
  \varphi = \begin{bmatrix} A & B \\ C & D \end{bmatrix}, \quad
  A = -D^\tau, B = B^\tau, C = C^\tau,
\end{equation}
where $A, B, C, D$ are all $n \times n$ matrices and $M^\tau$ denotes the transpose of $M$ with respect to the antidiagonal, see \cite[\S1.2, p.~3]{Hum:IntroductionLie}.

Analogous to the case of the general linear group, there is a bijective correspondence between the irreducible representations of $\Sp(V)$ and dominant weights \cite[Theorem~17.11 and Corollary~17.21]{FH:RepresentationTheory}.
In the case of the symplectic group, the dominant weights are characterized by tuples $\mu = (\mu_1, \cdots, \mu_n)$ satisfying $\mu_1 \geq \mu_2 \geq \cdots \geq \mu_n \geq 0$.
We will denote the representation with highest weight $\mu$ by $\bS_{[\mu]}(V)$

A symplectic vector bundle over a scheme $X$ is a vector bundle $\sV$ equipped with a symplectic form $\langle\cdot , \cdot \rangle \colon \bigwedge^2 \sV \to \O_X$.
The functors $\bS_{[\mu]}$ are compatible with base change and extend to symplectic vector bundles.

Since $\Sp(V) \subseteq \GL(V)$, an irreducible representation $\bS_\lambda V$ of $\GL(V)$ restricts to a representation $\res^{\GL(V)}_{\Sp(V)} \bS_\lambda V$ of $\Sp(V)$.
The decomposition of $\res^{\GL(V)}_{\Sp(V)} \bS_\lambda$ in terms of irreducible representations of the symplectic group is given by the branching rule.

\begin{theorem}[{\cite[Theorem~2.5.1]{kt1987:youngdiagrammatic}}]
  \label{t:branching}
  If $\bS_\lambda V$ is an irreducible representation of $\GL(V)$ then its restriction to $\Sp(V)$ decomposes as
  \begin{equation*}
    \res^{\GL(V)}_{\Sp(V)} \bS_\lambda V
    = \bigoplus_{\mu} \Pi_{\Sp(V)}(\bS_{[\mu]} V)^{\oplus m(\lambda, \mu)}
  \end{equation*}
  where $m(\lambda,\mu) = \sum_{\kappa} c^{\lambda}_{\mu,(2\kappa)^\top}$, $2\kappa$ is the partition $(2 \kappa_1, 2 \kappa_2, \dots)$, and $\Pi_{\Sp(V)}$ is the specialization map on irreducible $\Sp(V)$-representations.
\end{theorem}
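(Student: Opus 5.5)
The plan is to prove the branching rule at the level of characters. I would first establish the identity in the ring $\Lambda$ of symmetric functions, working with Koike--Terada's \emph{universal characters}, and then pass to $\Sp(V)$ through the specialization map $\Pi_{\Sp(V)}$. Concretely, for each partition $\mu$ (of any length) let $sp_{\langle\mu\rangle} \in \Lambda$ be the universal symplectic character, defined by the Jacobi--Trudi type determinant
\begin{equation*}
  sp_{\langle\mu\rangle} = \tfrac12 \det\bigl(h_{\mu_i - i + j} + h_{\mu_i - i - j + 2}\bigr).
\end{equation*}
Here $\Pi_{\Sp(V)}$ evaluates $\Lambda$ at $x_1^{\pm 1}, \dots, x_n^{\pm 1}$, the eigenvalues of a maximal torus element on $V$, and we read the result as a virtual character. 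By the Weyl character formula written as a ratio of alternants, $\Pi_{\Sp(V)}(sp_{\langle\mu\rangle})$ is the character of $\bS_{[\mu]}V$ when $l(\mu) \le n$. When $l(\mu) > n$ it is $0$ or $\pm$ an irreducible character, by the modification rule. Characters of $\bS_\lambda V$ restricted to the torus are $s_\lambda(x_1^{\pm1}, \dots, x_n^{\pm1})$, so the theorem reduces to the identity
\begin{equation*}
  s_\lambda = \sum_\mu \Bigl(\sum_\kappa c^\lambda_{\mu,(2\kappa)^\top}\Bigr) sp_{\langle\mu\rangle}
\end{equation*}
in $\Lambda$, followed by applying $\Pi_{\Sp(V)}$.

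To prove the identity in $\Lambda$, I would use generating functions in an auxiliary set of variables $y = (y_1, y_2, \dots)$. Three inputs are needed.
\begin{enumerate}
\item The Cauchy identity $\sum_\lambda s_\lambda(x)s_\lambda(y) = \prod_{i,j}(1 - x_iy_j)^{-1}$.
\item Littlewood's identity $\sum_\kappa s_{(2\kappa)^\top}(y) = \prod_{i<j}(1 - y_iy_j)^{-1}$.
\item The symplectic Cauchy identity $\sum_\mu sp_{\langle\mu\rangle}(x)\, s_\mu(y) = \prod_{i<j}(1 - y_iy_j) \prod_{i,j}(1 - x_iy_j)^{-1}$, valid in $\Lambda_x \hat\otimes \Lambda_y$.
\end{enumerate}
Multiplying the third identity by the second gives
\begin{equation*}
  \sum_\lambda s_\lambda(x) s_\lambda(y) = \sum_{\mu,\kappa} sp_{\langle\mu\rangle}(x)\, s_\mu(y)\, s_{(2\kappa)^\top}(y).
\end{equation*}
Expanding $s_\mu s_{(2\kappa)^\top}$ with the Littlewood--Richardson rule \eqref{p:lr-rule} and comparing coefficients of $s_\lambda(y)$ then yields the identity above.

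The main obstacle is the symplectic Cauchy identity, since it is what ties the determinantal definition of $sp_{\langle\mu\rangle}$ to the product side. I would first prove it in finitely many $x$-variables with $l(\mu) \le n$. There one multiplies both sides by the Weyl denominator and expands $\prod(1 - x_i^{\pm1}y_j)^{-1}$ via the Cauchy--Binet formula, giving a Weyl-character-formula computation. The identity in $\Lambda$ then follows because the coefficients stabilize as $n \to \infty$, and $\Lambda$ embeds into the inverse limit.

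Finally, I would check that the specialization is compatible with the definition of $\Pi_{\Sp(V)}$. Applying $\Pi_{\Sp(V)}$ is a ring homomorphism, so the $\Lambda$-identity specializes term by term. In the stable range $l(\lambda) \le n$, every $\mu$ with $c^\lambda_{\mu,\cdot} \ne 0$ satisfies $l(\mu) \le n$, and we recover the classical Littlewood restriction rule. Outside that range, the non-standard $sp_{\langle\mu\rangle}$ are exactly what the map $\Pi_{\Sp(V)}$ in the statement encodes, which is how the formula holds for all $\lambda$.
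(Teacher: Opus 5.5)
The paper gives no proof of this statement; it quotes it from Koike--Terada. Your outline is the universal-character argument of that source: $sp_{\langle\mu\rangle}$, a Cauchy-type generating function, Littlewood's identity, then specialization. The reduction to characters and the formal core are correct. That core is multiplying the symplectic Cauchy identity by $\sum_\kappa s_{(2\kappa)^\top}(y)$, expanding $s_\mu s_{(2\kappa)^\top}$ by Littlewood--Richardson, and comparing coefficients of $s_\lambda(y)$.

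One step does not work as written: the passage from finitely many $x$-variables to $\Lambda$. Write $\Pi_n$ for the substitution $x \mapsto (x_1^{\pm1},\dots,x_n^{\pm1})$. The inverse limit presenting $\Lambda$ uses the maps that set a variable to $0$. $\Pi_n$ is not of that form. It factors through $\Lambda_{2n}$ followed by $z_{n+i}\mapsto z_i^{-1}$, and this substitution has a kernel: for instance $e_{n+1}-e_{n-1}\mapsto 0$, since $\bigwedge^{n+1}V\cong\bigwedge^{n-1}V$ for $\Sp(V)$. So ``$\Lambda$ embeds into the inverse limit'' does not let you lift an identity proved after applying $\Pi_n$.

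What you actually need is that $\Pi_n$ is injective on the elements of degree at most $d$ whenever $n\ge d$. This follows from your own inputs:
\begin{itemize}
\item The top-degree part of $sp_{\langle\mu\rangle}$ is $s_\mu$. Hence the $sp_{\langle\mu\rangle}$ with $|\mu|\le d$ form a basis of the degree-$\le d$ part of $\Lambda$.
\item For $n\ge d$ these all have $l(\mu)\le n$. Their images under $\Pi_n$ are therefore characters of pairwise non-isomorphic irreducibles, hence linearly independent.
\end{itemize}
The coefficient of $s_\mu(y)$ on the product side $\prod_{j<k}(1-y_jy_k)\prod_{i,j}(1-x_iy_j)^{-1}$ has $x$-degree at most $|\mu|$. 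Taking $n\ge|\mu|$ then identifies it with $sp_{\langle\mu\rangle}$.

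Relatedly, in the finite identity you should use exactly $n$ variables $y_j$. That is what makes Cauchy--Binet apply to the square matrix $\bigl((1-x_iy_j)^{-1}(1-x_i^{-1}y_j)^{-1}\bigr)$. Its determinant is, up to $\prod_j y_j^{-1}$, a Cauchy determinant in $x_i+x_i^{-1}$ and $y_j+y_j^{-1}$. That evaluation, which your sketch omits, is where $\prod_{j<k}(1-y_jy_k)$ comes from.

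With more $y$-variables, truncating the sum to $l(\mu)\le n$ can fail. For $n=1$ and three $y$-variables, the coefficient of $y_1y_2y_3$ on the product side is $h_3(x,x^{-1})-h_1(x,x^{-1})$, not $h_3(x,x^{-1})$. The discrepancy is exactly the nonstandard term $\Pi_1(sp_{\langle 1,1,1\rangle})$.

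Finally, the modification rule (that $\Pi_n(sp_{\langle\mu\rangle})$ is $0$ or $\pm$ an irreducible when $l(\mu)>n$) is not needed for the identity itself. It is only needed to match the specialization with the folding recipe the paper describes after the theorem.
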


If $\Pi_{\Sp(V)}(\bS_{[\lambda]} V) = \bS_{[\mu]} V$  then $\mu$ is obtained from $\lambda$ in the following manner.
Let $s = l(\lambda^\top)$ and for each $i = 1, \dots, s$ ``fold up'' the $i$-the column of $\lambda$ at depth $n + i$ and remove overlapping boxes.
If $k_i$ is the (possibly negative) length of the $i$-th column after removing the overlapping boxes and $t_i = k_i - (i - 1)$, sorting the $t_i$ in descending order $(t_{i_1}, \dots, t_{i_s})$ lets us define $\mu$ by $\mu^\top_j = t_{i_j} + (j - 1)$.
See \cite{kt1987:youngdiagrammatic} for details.

\begin{example}[\mbox{\cite[Example~2]{kt1987:youngdiagrammatic}}]
  Let $n = 3$ and $\lambda^\top = (8,6,4,1)$.
  The folding operation is given by
  \begin{equation*}
    \begin{array}{ccccc}
  \ydiagram{4, 3, 3, 3, 2, 2, 1, 1}
  *[*(Cyan) \cdot]{0, 0, 0, 0, 0 + 1, 2, 1, 1}
  &
    \underset{\text{fold up } \boxdot}{\leadsto}
  &
    \begin{ytableau}
      \none & \none & \none & \none  \\
      \none & \none & \none & \none  \\
      *(Red)\times & ~ & ~ & ~ \\
      *(Red)\times & ~ & ~ \\
      *(Red)\times & ~ & ~ \\
      *(Red)\times & ~ & ~ \\
      \none & *(Red)\times
    \end{ytableau}
  &
    \underset{\text{remove } \boxtimes}{\leadsto}
  &
    \begin{ytableau}
      \none[0] & \none[4] & \none[4] & \none[1]  \\
      \none & ~ & ~ & ~ \\
      \none & ~ & ~ \\
      \none & ~ & ~ \\
      \none & ~ & ~
    \end{ytableau}
\end{array}
% \ydiagram{4, 3, 3, 3, 2, 2, 1, 1}
% *[*(Yellow)]{0, 0, 0, 0, 0 + 1, 2, 1, 1}
% *[*(Red)\times]{1, 1, 1, 1, 1 + 1}
%%% Local Variables:
%%% mode: LaTeX
%%% TeX-master: "../2025-gkp-isotropic-kalman"
%%% End:

  \end{equation*}
  where blue boxes ($\boxdot$) represent the boxes that get folded  and the overlapping boxes after folding operation are shown in red ($\boxtimes$).
  Removing these boxes gives us $(k_1, k_2, k_3, k_4)= (0,4,4,1)$, $(t_1, t_2, t_3, t_4) = (0,3,2,-2)$, $\mu^\top = (3,3,2,1)$, and $\Pi_{\Sp(V)}(\bS_{[\lambda]}) = (-1)^{4} \bS_{[\mu]} = \bS_{[\mu]}$.
\end{example}

If $\lambda$ is a partition, we can show that the above operation actually results in a subpartition $\mu$ of $\lambda$.
The proof follows form the combinatorics of fold and remove operations above and results in \cite{kt1987:youngdiagrammatic}; we omit details.
\begin{lemma}
  If $\Pi_{\Sp(V)}(\bS_{[\lambda]}) = \bS_{[\mu]}$ then $\mu \subseteq \lambda$.
\end{lemma}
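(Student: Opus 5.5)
The plan is to compare $\mu$ and $\lambda$ column by column. Since $\mu \subseteq \lambda$ holds exactly when $\mu^\top_j \le \lambda^\top_j$ for every $j$, it suffices to prove $\mu^\top_j \le \lambda^\top_j$ for $j = 1, \dots, s$, where $s = l(\lambda^\top)$. The folding description preceding the statement expresses $\mu^\top$ entirely in terms of the numbers $k_i$ and $t_i = k_i - (i-1)$. So the argument splits into two steps: first bound each $k_i$ by the corresponding column length of $\lambda$, and then check that sorting the $t_i$ does not destroy this bound.

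\emph{Step 1: each $k_i$ is at most $\lambda^\top_i$.} Write $c_i = \lambda^\top_i$ for the length of the $i$-th column. There are two cases.
\begin{itemize}
\item If $c_i \le n + i$, nothing is folded, so $k_i = c_i$.
\item If $c_i > n + i$, the $c_i - (n+i)$ boxes below depth $n+i$ are folded up. Each folded box overlaps one box of the column, and both are removed, so $k_i = c_i - 2(c_i - n - i) < c_i$.
\end{itemize}
In either case $k_i \le c_i$. Setting $u_i = \lambda^\top_i - (i-1)$, this gives $t_i \le u_i$ for every $i$. Because $\lambda^\top$ is weakly decreasing, the sequence $u_1 > u_2 > \dots > u_s$ is strictly decreasing.

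\emph{Step 2: sorting preserves the bound.} This is the only step needing a small argument. Let $t_{i_1} \ge \dots \ge t_{i_s}$ be the $t_i$ sorted in descending order. Fix $j$. The indices $i_1, \dots, i_j$ are $j$ distinct elements of $\{1, \dots, s\}$, so one of them, say $i_\ell$, satisfies $i_\ell \ge j$. Then
\begin{equation*}
  t_{i_j} \le t_{i_\ell} \le u_{i_\ell} \le u_j,
\end{equation*}
where the last inequality uses that $u$ is decreasing. Therefore
\begin{equation*}
  \mu^\top_j = t_{i_j} + (j-1) \le u_j + (j-1) = \lambda^\top_j.
\end{equation*}

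\emph{Step 3: conclusion.} When $\Pi_{\Sp(V)}(\bS_{[\lambda]}) = \bS_{[\mu]}$ is nonzero, the $t_i$ are distinct and $\mu$ is a genuine partition. Any trailing entries of $\mu^\top$ that are $\le 0$ are simply discarded and contribute nothing. The column inequalities from Step 2 then give $\mu \subseteq \lambda$.

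I expect the main point requiring care to be the bookkeeping convention of the folding procedure in Step 1: that $k_i$ counts exactly the surviving boxes of column $i$, so that it never exceeds $c_i$. Once that is settled, the sorting argument in Step 2 is a routine pigeonhole observation.
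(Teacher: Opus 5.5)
Your proof is correct, and it is the fold-and-remove argument the paper points to without giving details: folding can only shorten a column, so $k_i \le \lambda^\top_i$. This also holds when $\lambda^\top_i > 2(n+i)$; there not every folded box meets a box of the column, but the signed length $k_i = 2(n+i)-\lambda^\top_i$ is still smaller than $\lambda^\top_i$. Your pigeonhole step then correctly shows that sorting the $t_i$ preserves the columnwise bound $\mu^\top_j \le \lambda^\top_j$, which gives $\mu \subseteq \lambda$.
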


\begin{remark}[Stable branching]
  When $l(\lambda) \leq n$ $\Pi_{\Sp(V)}(\bS_{[\mu]}) = \bS_{[\mu]} V$ and we say the branching is stable and
  \begin{equation*}
    \res^{\GL(V)}_{\Sp(V)} \bS_\lambda V = \sum_\mu m(\lambda,\mu) \bS_{[\mu]} V.
  \end{equation*}
\end{remark}

\subsection{Borel--Weil--Bott}
We state the version of these results as they appear in \cite[Chapter~4]{Wey:CohomVect}; original proofs come from \cite{bot1957:homogeneous, ser1954:representations}.

\subsubsection{Symplectic Borel--Weil--Bott}
Let $V$ be a symplectic vector space.
The isotropic Grassmannian is a projective variety $X = \IGr(s, V) \subseteq \Gr(s, V)$ consisting of all $s$-dimensional isotropic subspaces of a $V$.
The Weyl group of $\Sp(V)$ is the group $W_n = \mathbf Z_2 \wr \fS_n$ of signed permutations.
If $\sR$ is the tautological subbundle on $X$, the quotient $\sR^\perp / \sR$ is a symplectic bundle.
If $\lambda = (\lambda_1, \dots, \lambda_s)$ and $\mu = (\mu_1, \dots, \mu_{n - s})$ then we define the bundle
\begin{equation*}
  \sV(\lambda, \mu) = \bS_\lambda \R \otimes \bS_{[\mu]}(\R^\perp / \R)
\end{equation*}
whose cohomology on $X$ is given by Borel--Weil--Bott.

\begin{theorem}[{\cite[Corollary~4.3.7]{Wey:CohomVect}}]
  \label{t:bwb:iso}
  Suppose $\lambda$ is a dominant weight of $\GL(s)$ and $\mu$ a dominant weight of $\Sp(2n - 2s)$.
  Let $\gamma = (-\lambda_s, \dots, -\lambda_1, \mu_1, \dots, \mu_{n - s})$.
  One of the following mutually exclusive possibilities occurs:
  \begin{enumerate}
  \item
    There exists $\sigma \in W_n$, $\sigma \neq \mathrm{id}$ such that $\sigma \bullet \gamma = \gamma$, in which case, the cohomology groups $\rH^i(\IGr(s, V), \sV(\lambda, \mu)) = 0$ for all $i \geq 0$.

  \item
    There exists a unique $\sigma \in W_n$ such that $\sigma \bullet \gamma = \alpha$ is a dominant integral weight for $\Sp(V)$.
    Then $\rH^i(\IGr(s, V), \sV(\lambda, \mu)) = 0$ for all $i \neq \ell(\sigma)$ and $\rH^{\ell(\sigma)}(\IGr(s, V), \sV(\lambda, \mu)) = \bS_{[\alpha]} V$.
  \end{enumerate}
\end{theorem}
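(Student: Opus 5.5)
This is the Borel--Weil--Bott theorem for the symplectic group, quoted from \cite[Corollary~4.3.7]{Wey:CohomVect}. The plan is to deduce it from the general Bott theorem on the complete flag variety $\Sp(V)/B$ by pushing forward along the fibration $\Sp(V)/B \to \IGr(s,V) = \Sp(V)/P$, where $P$ is the parabolic subgroup stabilizing an $s$-dimensional isotropic subspace.

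\emph{Step 1: the bundle on the partial flag variety.} The Levi factor of $P$ is $\GL(s)\times\Sp(2n-2s)$. The bundle $\sV(\lambda,\mu)=\bS_\lambda\R\otimes\bS_{[\mu]}(\R^\perp/\R)$ is the homogeneous bundle $\Sp(V)\times^P E$, where $E$ is the irreducible $P$-module on which the unipotent radical acts trivially and the Levi factor acts through $\bS_\lambda\otimes\bS_{[\mu]}$. Its highest weight, written in the coordinates $\epsilon_1,\dots,\epsilon_n$ of the maximal torus and using the ordering convention of \cite{Wey:CohomVect}, is exactly
\[
\gamma=(-\lambda_s,\dots,-\lambda_1,\mu_1,\dots,\mu_{n-s}).
\]
The entries of $\lambda$ appear negated and reversed because $\R$ is a subbundle, so the weights of $\R$ are dual to those of the standard representation. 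Checking this sign and ordering convention against Weyman's is the first thing I would do.

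\emph{Step 2: reduction to a line bundle.} Let $q\colon \Sp(V)/B\to\Sp(V)/P$ be the projection; its fibre is $P/B$, the flag variety of the Levi factor. By Borel--Weil on $P/B$ we have $q_*\sL_\gamma = \sV(\lambda,\mu)$ and $R^iq_*\sL_\gamma=0$ for $i>0$, because $\gamma$ is dominant for the Levi factor. This uses that $\lambda$ is dominant for $\GL(s)$ and $\mu$ is dominant for $\Sp(2n-2s)$. The Leray spectral sequence therefore degenerates and gives
\[
\rH^i(\IGr(s,V),\sV(\lambda,\mu))\cong \rH^i(\Sp(V)/B,\sL_\gamma)
\quad\text{for all } i.
\]

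\emph{Step 3: Bott's theorem on $\Sp(V)/B$.} Apply Bott's theorem with the dot action $\sigma\bullet\gamma=\sigma(\gamma+\rho)-\rho$, where $\rho=(n,n-1,\dots,1)$ and $W_n=\bZ_2\wr\fS_n$ acts by signed permutations. There are two cases.
\begin{itemize}
\item If $\gamma+\rho$ is singular, i.e.\ it has two entries of equal absolute value or a zero entry, then some reflection fixes it. In this case all cohomology vanishes, which is alternative (1).
\item Otherwise there is a unique $\sigma\in W_n$ making $\sigma(\gamma+\rho)$ strictly dominant. Then cohomology is concentrated in degree $\ell(\sigma)$ and equals $\bS_{[\sigma\bullet\gamma]}V$, which is alternative (2).
\end{itemize}
The two alternatives are mutually exclusive because the stabilizer of a vector under a reflection group is nontrivial exactly when that vector lies on a reflecting hyperplane.

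\emph{Main obstacle.} I expect the main difficulty to be bookkeeping, not substance. One must match $\rho$ and the length function $\ell(\sigma)$ (counted in terms of sign changes and inversions) to the conventions of \cite[Chapter~4]{Wey:CohomVect}. One must also confirm that dualities are handled consistently: Weyman computes cohomology of bundles with a specified dual convention, and the identification should output $\bS_{[\alpha]}V$ rather than its dual. Since symplectic representations are self-dual, the latter point is harmless. For the details I would simply cite \cite[Corollary~4.3.7]{Wey:CohomVect}.
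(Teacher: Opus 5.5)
Your proposal is correct and agrees with the paper, which gives no argument beyond citing \cite[Corollary~4.3.7]{Wey:CohomVect}. Your reduction is exactly the standard derivation behind that corollary: Borel--Weil on the Levi flag variety $P/B$ makes the Leray spectral sequence for $\Sp(V)/B \to \IGr(s,V)$ collapse, and then Bott's theorem is applied on $\Sp(V)/B$.

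The one normalization to pin down is the one you flag. The weight $\gamma$ is literally the highest weight of the fibre when the base point is the span of $e_{-1},\dots,e_{-s}$, or equivalently when the bundle is written as $\bS_{(-\lambda_s,\dots,-\lambda_1)}\R^\ast\otimes\bS_{[\mu]}(\R^\perp/\R)$. The sanity checks $\R^\ast \mapsto \gamma=(1,0,\dots,0)$ with $\rH^0=V$, and $\R \mapsto$ a singular $\gamma+\rho$, confirm the statement's convention.
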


Above, the dot action $\sigma \bullet \mu$ of $\sigma \in W_n$ on a weight  $\gamma = (\gamma_1, \dots, \gamma_n)$ is given by $\sigma \bullet \gamma = \sigma (\gamma + \rho) - \rho$ where $\rho = (n, n - 1, \dots, 1)$.
The Bruhat length $\ell(\sigma)$ of $\sigma$ is calculated using \eqref{def:len-type-c}.

\subsubsection{Relative Borel--Weil--Bott} Now let $X$ be a nonsingular projective variety and $\sE$ a vector bundle of rank $n$ over $X$ with structural morphism $h \colon \sE \to X$.
Let $\Flag_X(b_1, \dots, b_t; \sE)$ be a the relative flag variety over $X$; the structural morphism is again denoted by
\allowbreak$h \colon \Flag_X(b_1, \dots, b_t;\;\sE) \to X$.
Let $\sR_{b_j}$ denote the tautological bundle of rank $b_j$ over $\Flag_X(b_1, \dots, b_t; \sE)$ with $\sR_{b_{t + 1}} = \sE$ and $\sR_{b_0} = 0$.
For weights $\alpha^0, \dots, \alpha^t$ define the vector bundle
\begin{equation*}
  \sV(\alpha) = \sV(\alpha^0, \dots, \alpha^t) = \bigotimes_{j = 0}^t \bS_{\alpha^j} (\sR_{b_{j + 1}} / \sR_{b_j})
\end{equation*}
on $\Flag_X(b_1, \dots, b_t;\; \sE)$.

\begin{theorem}[{\cite[Theorem~4.1.8]{Wey:CohomVect}}]
  \label{t:bwb:rel}
  Let $\alpha^0, \dots, \alpha^t$ be dominant weights and let $\alpha = (\alpha^0, \dots, \alpha^t) = (\alpha^0_1, \dots, \alpha^0_{b_1}, \dots, \alpha^t_1, \dots \alpha^t_{n - b_t})$.
  One of the two mutually exclusive possibilities occurs
  \begin{enumerate}
  \item
    There exists $\sigma \in \fS_n$ such that $\sigma \bullet \alpha = \alpha$.
    Then the higher direct images $\rR^i h_\ast \sV(\alpha)$ all vanish for $i \geq 0$.

  \item
    There exists unique $\sigma \in \fS_n$ such that $\sigma \bullet \alpha = \beta$ is a partition.
    In this case all higher direct images vanish for $i \neq \ell(\sigma)$ and $\rR^{\ell(\sigma)} h_\ast \sV(\alpha) = \bS_\beta \sE$.
  \end{enumerate}
\end{theorem}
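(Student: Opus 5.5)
The plan is to reduce to the absolute Bott theorem for $\GL_n$ on a partial flag variety, and then globalize over $X$ using the frame bundle of $\sE$ together with cohomology and base change.

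\emph{Step 1: rewrite everything as associated bundles.} Let $P \to X$ be the principal $\GL_n$-bundle of frames of $\sE$, so that $\sE = P \times^{\GL_n} \bC^n$. Put $F = \Flag(b_1, \dots, b_t; \bC^n) = \GL_n / P_b$, where $P_b$ is the parabolic stabilizing the standard partial flag. Then $\Flag_X(b_1, \dots, b_t; \sE) = P \times^{\GL_n} F$, and $h$ is the induced projection. On $F$ the successive quotients of tautological bundles are homogeneous bundles. Hence the homogeneous bundle $\sV_F(\alpha) = \bigotimes_j \bS_{\alpha^j}(R_{b_{j+1}}/R_{b_j})$ on $F$ satisfies $\sV(\alpha) = P \times^{\GL_n} \sV_F(\alpha)$.

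\emph{Step 2: local triviality and base change.} Over an open $U \subseteq X$ on which $\sE$ is trivial, $h^{-1}(U) \cong U \times F$ and $\sV(\alpha)$ becomes the pullback of $\sV_F(\alpha)$. Since $h$ is flat and projective, and the fiber cohomology $\rH^i(F, \sV_F(\alpha))$ has constant dimension, Grauert's theorem (or flat base change along $U \times F \to U$) gives the following. Each $\rR^i h_\ast \sV(\alpha)$ is locally free, and it equals $\rH^i(F, \sV_F(\alpha)) \otimes \sO_U$ on $U$. By $\GL_n$-equivariance of these identifications, the transition data are exactly those of $P$. Hence
\begin{equation*}
  \rR^i h_\ast \sV(\alpha) \cong P \times^{\GL_n} \rH^i(F, \sV_F(\alpha)).
\end{equation*}
Consequently, if $\rH^i(F, \sV_F(\alpha)) \cong \bS_\beta \bC^n$ as $\GL_n$-modules, then $\rR^i h_\ast \sV(\alpha) \cong \bS_\beta \sE$, and vanishing on $F$ gives vanishing of the direct image. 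This reduces the theorem to the case where $X$ is a point.

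\emph{Step 3: absolute Bott on $F$.} Let $q \colon \Flag(1, 2, \dots, n-1; \bC^n) = \GL_n/B \to F$ be the projection from the full flag variety. Its fibers are products of full flag varieties of the blocks $R_{b_{j+1}}/R_{b_j}$. Apply Borel--Weil fiberwise, i.e.\ Step 2 again with $X = F$ and each $\alpha^j$ dominant. This gives $q_\ast \sL_\alpha = \sV_F(\alpha)$ with vanishing higher direct images, where $\sL_\alpha$ is the line bundle attached to the weight $\alpha$. The Leray spectral sequence then gives $\rH^i(F, \sV_F(\alpha)) = \rH^i(\GL_n/B, \sL_\alpha)$, and Bott's theorem on $\GL_n/B$ finishes the proof. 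If $\alpha + \rho$ has a repeated entry, equivalently some nonidentity $\sigma$ fixes $\alpha$ under the dot action, then all cohomology vanishes. Otherwise there is a unique $\sigma$ with $\sigma \bullet \alpha = \beta$ dominant, and the cohomology is concentrated in degree $\ell(\sigma)$, where it equals $\bS_\beta \bC^n$.

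\emph{Main obstacle.} The only real care needed is in normalizations. One must match the sign conventions for the line bundle $\sL_\alpha$ and the dot action with the paper's convention that $\bS_\lambda$ here is Weyman's $\bL_{\lambda^\top}$. Concretely, the relevant $\rH^0$ should be $\bS_\alpha$ of the \emph{quotient-type} bundles, rather than of their duals. I would fix this by checking the single case $t = 1$, $\alpha = (1, 0, \dots, 0)$ on a projective space. Everything else is a formal consequence of Bott's theorem and base change, which is why the paper simply cites \cite[Theorem~4.1.8]{Wey:CohomVect}.
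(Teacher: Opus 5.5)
Your argument is sound, but it follows a different route from the cited source. The paper gives no proof of its own; it quotes Weyman, who works relatively over $X$ from the start. There, Bott's theorem is proved for the complete flag bundle of $\sE$ by factoring it through $\bP^1$-bundles, and the partial-flag version is then deduced by pushing forward from complete flags, much as in your Step~3. You instead take the absolute theorem on $\GL_n/B$ as a black box and obtain the relative statement from $\GL_n$-equivariance and flat base change along the frame bundle (Steps~1--2). Your route shows that the relative version adds nothing beyond the classical theorem. Weyman's route actually proves Bott's theorem and never needs principal bundles or equivariant descent.

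The point you leave open is more than a normalization check. Read literally, the statement takes $\sR_{b_j}$ to be the tautological \emph{sub}bundles and attaches $\alpha^0$ to $\sR_{b_1}$, and under that reading your own test case fails. Take $X$ a point, $t=1$, $b_1=1$ and $\alpha=(1,0,\dots,0)$. The statement then predicts $\rH^0(\bP^{n-1},\sR_1)\cong\bC^n$, whereas $\sR_1=\sO(-1)$ has no cohomology at all. The theorem holds when the blocks are listed quotient-first, i.e.\ $\alpha=(\alpha^t,\alpha^{t-1},\dots,\alpha^0)$ with $\alpha^t$ on $\sE/\sR_{b_t}$ and $\alpha^0$ on $\sR_{b_1}$. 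With this ordering, $\sR_1$ corresponds to $(0,\dots,0,1)$, whose $\alpha+\rho$ has a repeated entry, and $\sE/\sR_1$ corresponds to $(1,0,\dots,0)$, giving $\rH^0=\bC^n$. This is also how the paper applies the theorem in Proposition~\ref{p:g-summands}, where the weight $(k,\lambda')$ puts $k$ on the quotient $\sR_m/\sR_{m-1}$. Set up $\sV_F(\alpha)$, the line bundle $\mathscr{L}_\alpha$, and the fiberwise Borel--Weil step on each block in this order, and your Steps~1--3 then go through unchanged.
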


Above we have $\sigma \bullet \alpha = \sigma(\alpha + \rho) - \rho$  for $\rho = (n - 1, \dots, 0)$ and the Bruhat length $\ell(\sigma)$ of $\sigma$ is the number of inversions of $\sigma \in \fS_n$.

\subsubsection{Combinatorics of signed permutations}
If $(x_1, \dots, x_n) \in \bZ^n$ then define the sets
\begin{align*}
  \Inv(x_1, \dots, x_n)
  &= \left\{ (i, j) \mid 1 \leq i < j \leq n \text{ and } x_i > x_j \right\},
  \\
  \Neg(x_1, \dots, x_n)
  &= \left\{i \in [n] \mid x_i < 0 \right\}, \\
  \Nsp(x_1, \dots, x_n)
  &= \left\{ (i, j) \mid 1 \leq i < j \leq n \text{ and }  x_i + x_j < 0 \right\}
\end{align*}
called inversions, negative entries, and negative pairs of the tuple $(x_1, \dots, x_n)$.
If $\sigma \in W_n$ then we define the statistics above for $\sigma$ as the corresponding statistics for the tuple $(\sigma(n), \dots, \sigma(1))$; that is, $\Inv(\sigma) = \Inv(\sigma(n), \dots, \sigma(1))$, etc.
The Bruhat length $\ell(\sigma)$ of the signed permutation $\sigma$ is
\begin{equation}
  \label{def:len-type-c}
  \ell(\sigma) = |\Inv(\sigma)| + |\Neg(\sigma)| + |\Nsp(\sigma)|.
\end{equation}
See discussion in \cite[\S2.3]{sam2015:homology} for details on the formulas above and \cite[Appendix~A3]{BB:CombinatoricsCoxeter} for the general theory.

\subsection{Geometric method for computing syzygies}
We end with an overview of Weyman's geometric method for calculating syzygies.
Details and proofs are available in \cite[Chapter~5]{Wey:CohomVect}.
Let $X$ be a projective variety and $E$ a vector space.
Picking a subbundle $Z \subseteq X \times E $ defines a subvariety $Y \subseteq E$ by restricting the projection $\pi = \pi_E \colon X \times E \to E$ to $Z$:
\begin{equation*}
  \begin{tikzcd}
    Z \ar["\subseteq"]{r} \ar["\pi'"]{d} & X \times E  \ar["\pi = \pi_E"]{d} \ar["\pi_X"]{r} & X \\
    Y \ar{r} & E
  \end{tikzcd}
\end{equation*}
We consider $X \times E$ as the total space of the trivial bundle $\sE$ and $Z$ as the total space of the subbundle $\sS \subseteq \sE$.
Let $\xi = (\sE / \sS)^\ast$ and $\eta = \sS^\ast$ be the dual bundles and $A = \Sym(E^\ast)$ the coordinate ring of $E$ with grading given by $\deg{E^\ast} = 1$.
For every $i \in \bZ$ we define the graded $A$-modules
\begin{equation*}
  \bF_i =  \bigoplus_{j \geq 0}\, \rH^j(X, \bigwedge^{i + j}\xi) \otimes A(-i-j).
\end{equation*}

\begin{theorem}[Geometric method]
  \label{t:geom-method}~\newline
  \begin{enumerate}
  \item
    There exists differentials $d_i \colon \bF_i \to \bF_{i - 1}$ of degree 0 such that $\bF_\bullet$ is a complex of graded free $A$-modules and $\rH_{-i}(\bF_\bullet) = \rR^i\pi'_\ast\O_Z$.
  \item
    If $\pi'$ is birational and $\rR^i\pi'_\ast\O_Z = 0$ for all $i > 0$ then $\bF_\bullet$ is a minimal $A$-free resolution of the normalization of $Y$.
    Moreover, the normalization of $Y$ has rational singularities.

  \item
    The sheaf $\rR^i\pi'_\ast\O_Z = \rH^i(Z,\O_Z)$ and can be identified with the graded $A$-module $\rH^i(X,\mathrm{Sym} \, \eta)$
  \end{enumerate}
\end{theorem}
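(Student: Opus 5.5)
The plan follows Weyman's construction in \cite[Chapter~5]{Wey:CohomVect}: resolve $\O_Z$ on $X \times E$ by a Koszul complex, push it forward along the proper map $\pi$, and then replace the resulting complex by a minimal free one.

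\emph{Koszul resolution.} Since $Z$ is the total space of $\sS \subseteq \sE$, it is cut out in $X \times E$ by the section $\pi_X^\ast\xi \to \O_{X \times E}$ given by the composite $\pi_X^\ast(\sE/\sS)^\ast \to \pi_X^\ast \sE^\ast \to \O_{X\times E}$. This section is regular of codimension $t = \rank\xi$, so the Koszul complex $\bK(\xi)_\bullet$ with terms $\bigwedge^k \pi_X^\ast\xi = \pi_X^\ast\bigwedge^k\xi \otimes_{\O_E}\O_{X\times E}$ is a locally free resolution of $\O_Z$.

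\emph{Pushforward.} Because $\pi$ is proper and $E$ is affine, $\rR\pi'_\ast\O_Z = \rR\pi_\ast\bK(\xi)_\bullet$. Each term satisfies
\[
\rR^j\pi_\ast\bigl(\pi_X^\ast\textstyle\bigwedge^k\xi\bigr) = \rH^j\bigl(X,\textstyle\bigwedge^k\xi\bigr)\otimes A(-k)
\]
by flat base change (the Künneth formula). To assemble these into one complex, choose a $\GL$-free (or Čech) injective-type resolution of each term compatible with the Koszul differentials. This gives a double complex of free $A$-modules whose total complex computes $\rR\pi_\ast\bK(\xi)_\bullet$.

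\emph{Minimization — the main obstacle.} Part~(1) requires showing that this total complex is quasi-isomorphic to a complex whose terms are exactly $\bF_i = \bigoplus_j \rH^j(X,\bigwedge^{i+j}\xi)\otimes A(-i-j)$. I would proceed as follows.
\begin{enumerate}
\item Filter the double complex by $\deg$ in $E^\ast$, or equivalently work degree by degree in the grading of $A$.
\item Observe that the $E_1$ page of the spectral sequence consists of the groups $\rH^j(X,\bigwedge^k\xi)\otimes A$.
\item Invoke the standard lemma that a bounded complex of graded free modules over a polynomial ring is homotopy equivalent to a minimal one. Minimality means $d \otimes_A \bC = 0$, and the terms of the minimal complex are $\Tor^A_\bullet(-,\bC)$ of the total complex.
\item Compute that $\Tor$ after tensoring with $\bC = A/A_+$. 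Setting the $E$-coordinates to zero kills the Koszul differential, leaving $\bigoplus \rH^j(X,\bigwedge^k\xi)$ placed in homological degree $k-j$.
\end{enumerate}
Since $\deg \bigwedge^k\xi = k$, the resulting differentials have degree $0$. We then get $\rH_{-i}(\bF_\bullet) = \rR^i\pi'_\ast\O_Z$, which proves part~(1). The delicate points are the bookkeeping of gradings and the existence of the minimal model, so this is where I expect the work to lie.

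\emph{Part (3).} $Z$ is affine over $X$ with $p_\ast\O_Z = \Sym(\sS^\ast) = \Sym\eta$, where $p\colon Z\to X$ is the bundle projection. Since $E$ is affine and $p$ is affine, we get
\[
\rR^i\pi'_\ast\O_Z = \rH^i(Z,\O_Z) = \rH^i(X,\Sym\eta),
\]
graded by symmetric degree.

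\emph{Part (2).} Assume $\rR^i\pi'_\ast\O_Z = 0$ for $i>0$. Then $\bF_\bullet$ has homology only in degree $0$. We also have $\bF_i = 0$ for $i<0$, because the terms $\rH^j(X,\bigwedge^{i+j}\xi)$ with $i<0$ contribute nothing to homology and are removed by minimality; this can be checked via the Euler-characteristic/degree argument in \cite[5.1.3]{Wey:CohomVect}. Hence $\bF_\bullet$ is a minimal resolution of $\rH^0(Z,\O_Z)$.

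Since $\pi'$ is birational and proper onto $Y$ and $Z$ is smooth, $\rH^0(Z,\O_Z)$ is a finite, integrally closed extension of $\bC[Y]$ inside the function field. It therefore equals the normalization $\tilde Y$, so $\bF_\bullet$ resolves $\tilde Y$. Finally, $Z \to \tilde Y$ is a resolution of singularities whose higher direct images vanish, which is precisely the statement that $\tilde Y$ has rational singularities.
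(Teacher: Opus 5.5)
Your outline is correct and is essentially the argument behind the paper's proof, which simply cites \cite[Theorems~5.1.2 and~5.1.3]{Wey:CohomVect}: resolve $\O_Z$ by the Koszul complex, push it forward termwise through a \v{C}ech double complex, and pass to a minimal model whose terms are the homology of the total complex modulo $A_+$, which gives exactly $\bF_\bullet$; smoothness of $Z$ (so $X$ must be smooth, as $\IGr(m,L)$ is) then gives normality and rational singularities. The one step to tighten is the vanishing of $\bF_i$ for $i<0$ in part (2): this comes from graded Nakayama, not an Euler-characteristic argument, since if $\bF_{-N}$ is the last nonzero term then exactness makes $\bF_{-N+1}\to\bF_{-N}$ surjective while minimality puts its image in $A_+\bF_{-N}$, forcing $\bF_{-N}=0$, and one inducts.
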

\begin{proof}
  The claims follow from \cite[Theorem~5.1.2]{Wey:CohomVect} and
  \cite[Theorem~5.1.3]{Wey:CohomVect}.
\end{proof}

% \begin{remark}
%   A locally free resolution of $\O_Y$ as an $\O_{X \times E}$-module is given by the Koszul complex
%   \begin{equation*}
%     \bK(\xi)_\bullet \colon 0 \to \bigwedge^t (\pi_X^\ast \xi) \to \dots \to \bigwedge^2 (\pi_X^\ast \xi) \to \pi_X^\ast\xi \to \O_{X \times E}
%   \end{equation*}
%   where  the differentials in the complex are homogeneous of degree $1$ in the coordinate functions on $E$.
%   The direct image $p_\ast \O_Y$ can be identified with the sheaf of algebras $\Sym(\sS^\ast)$.
%   It is often necessary to consider $\bK(\xi)_\bullet$ over $X$.
%   As $E$ is affine, the projection $\pi_X \colon X \times E \to X$ is an affine morphism and the direct image functor $(\pi_X)_\ast$ is exact \cite[Tag 0AVW]{stacks}.
%   Using the projection formula \cite[Exericse~II.5.1]{Har:AG},
%   \begin{equation*}
%     (\pi_X)_\ast \left( \bigwedge^q \pi_X^\ast \xi \right) = \bigwedge^q \xi \otimes \Sym(E^\ast) \otimes \O_X.
%   \end{equation*}
%   This gives us the necessary complex over $X$.
%   Since $(\pi_X)_\ast \bK(\xi)_\bullet$ is a Koszul complex over $X$, the differentials are given by comultiplication.
% \end{remark}

\section{Varieties defined by restriction of bundles}
\label{sec:restriction-defn}

\subsection{Kalman varieties via vector bundles}
We provide an alternate definition of the Kalman variety in terms of restrictions of cotangent bundles on Grassmannians.
Fix a vector space $V$ and a subspace $L \subseteq V$.
Let $X_0 = \Gr(s, L)$ and $Y_0 = \Gr(s, V)$.
The containment $L \subseteq V$ gives us an inclusion $\iota_0 \colon X_0 \hookrightarrow Y_0$ of the corresponding Grassmannians.
So if $0 \to \R_{Y_0} \to  V \times Y_0 \to \Q_{Y_0} \to 0$ is the universal sequence on $Y_0$, the cotangent bundle $\Omega_{Y_0} = \R_{Y_0} \otimes \Q_{Y_0}^\ast$ restricts to
\begin{equation*}
  \xi_0
  = \iota_0^\ast\Omega_{Y_0}
  = \R_{X_0} \otimes (\Q_{X_0}^\ast \oplus (V / L)^\ast)
\end{equation*}
on $X_0$.
Recall that $\sR_{X_0}$ and $\sQ_{X_0}$ are, respectively, the tautological sub- and quotient bundles over $\Gr(s,L)$ defined by \eqref{def:R} and \eqref{def:Q}.
Since $\xi_0$ is a subbundle of the trivial bundle $\O_{X_0} \otimes \End(V)$, let $\eta_0 = (\O_{X_0} \otimes \End(V)) \big/ \xi_0$.
Let $Z$ be the total space of the subbundle $\eta^*$ of $\Gr(s, L) \times \End(V)$.
The Kalman variety $\Kal(s, d, n) \subseteq \End(V)$ is precisely the image of the projection $\pi \colon \Gr(s, L) \times \End(V) \to \End(V)$ restricted to $Z$, see \cite[p.~3]{hua2020:equations}.

\subsection{Isotropic Kalman varieties}
We now follow the same strategy to define a symplectic generalization of Kalman varieties.

Let $V$ be a vector space endowed with a symplectic form.
Suppose $\dim{V} = 2(n + r)$ and let $\{e_1, \dots, e_{n+r}, e_{-n-r}, \dots e_{-1} \}$ be a symplectic basis for $V$ with respect to which the matrices in the symplectic lie algebra $\sp(V)$ are given by
\begin{equation*}
  \varphi = \begin{bmatrix} A & B \\ C & D \end{bmatrix}, \quad
  A = -D^\tau, B = B^\tau, C = C^\tau.
\end{equation*}
Above $A, B, C, D$ are all $(n + r) \times (n + r)$ matrices and $M^\tau$ denotes the transpose of $M$ with respect to the antidiagonal; see \ref{eq:symplectic-blocks}.

Let $W\subseteq V$ be an $r$-dimensional isotropic subspace and $L$ a symplectic subspace such that $W^\perp=L\oplus W$.
The subspace $L$ is a $2n$-dimensional symplectic subspace.
Consider the isotropic Grassmannians $X = \IGr(m, L) = \IGr(m, 2n)$ and $Y = \IGr(m + r, V) = \IGr(m + r, 2n + 2r)$.
Let $\iota = \iota_W \colon X \to Y$ be the inclusion defined by $R \mapsto R \oplus W$.
The cotangent bundle $\Omega_Y$ of $Y$ appears in the short exact sequence
\begin{equation}
  \label{eq:iso-cotangent-ses}
  0 \to \Sym^2\R_Y \to \Omega_Y \to \R_Y \otimes (\R_Y^\perp / \R_Y)^\ast \to 0
\end{equation}
\cite[Chapter~4, Exercise~9]{Wey:CohomVect}.
As pullback of vector bundles is exact and $\iota^\ast \R_Y = \R \oplus \sW$, we obtain a short exact sequence
\begin{equation}
  \label{eq:iso-cotangent-ses-restr}
  0 \to \Sym^2(\R \oplus \sW) \to \iota^\ast \Omega_Y \to (\R \oplus \sW) \otimes (\R^\perp / \R)^\ast \to 0
\end{equation}
of bundles on $X$.
Above, $\R = \R_X$ is the tautological subbundle on $X$ and $\sW$ is the trivial bundle with fibers isomorphic to $W$.
Let $\xi = \iota^\ast \Omega_Y$ and $\sE = \O_{\IGr(m, L)} \otimes \sp(V)$ be a trivial bundle over $\IGr(m, L)$ whose total space is $\IGr(m, L) \times \sp(V)$.
The dual $\sT = \xi^\ast$ is a quotient of $\sE$ by a subbundle $\sS$ resulting in a short exact sequence
\begin{equation*}
  0 \to \sS \to \O_{\IGr(m, L)}\otimes \sp(V) \to \sT \to 0.
\end{equation*}
The total space of $\sS$ is $Z = \Spec(\Sym(\eta))$ where $\eta = \sS^\ast$.

\begin{defn}[Isotropic Kalman variety]
  \label{def:ikal}
  The isotropic Kalman variety $\IKal(m)$ is the image of the projection $\pi$ restricted to $Z$ in the diagram
  \begin{equation}
    \label{eq:ikal-basic-diag}
    \begin{tikzcd}
      Z  \ar["\subseteq"]{r} \ar["\pi' = \left.\pi\right|_Z"]{d} & \IGr(m, L) \times \sp(V)  \ar["\pi = \pi_2"]{d} \ar["\pi_1"]{r} & \IGr(m, L) \\
      \IKal(m) \ar["\subseteq"]{r} & \sp(V)
    \end{tikzcd}.
  \end{equation}
  We call the bundle $\xi$ the defining bundle of $\IKal(m)$.
\end{defn}

We will denote the structure sheaf of $\IKal(m)$ by $\O_m$ and that of its normalization by $\widetilde \O_m$.

\begin{remark}[Lagrangian Kalman variety]
  \label{r:Lagrangian-kalman}
  Since any $U \in \IGr(m, L)$ is an isotropic subspace of $L$, $\dim{U} \leq n$. When $m = n$, $U$ is a Lagrangian subspace and we call the corresponding isotropic Kalman variety a Lagrangian Kalman Variety.
  For any Lagrangian $U$, $U^\perp = U$. So $\R^\perp / \R$ is trivial and the short exact sequence \eqref{eq:iso-cotangent-ses-restr} gives an isomorphism $\Sym^2(\R \oplus \sW) = \iota^\ast \Omega_Y$.
  We will study this case in detail in \S\ref{sec:Lagrangian}.
\end{remark}

\begin{remark}[Coisotropic Kalman variety]
  \label{r:coiso}
  Replacing all isotropic Grassmannians with coisotropic Grassmannians and carrying out the construction above results in the coisotropic Kalman variety $\CoIKal(m')$.
  Since the isotropic Grassmannian $\IGr(m, L)$ is isomorphic to the coisotropic Grassmannian $\CoIGr(2n - m, L)$, there is an isomorphism   $\IKal(m) \cong \CoIKal(2n - m)$ of the resulting Kalman varieties, see Corollary~\ref{c:coikal-set-theo}.

  Proposition~\ref{p:ikal-set-theo} and Corollary~\ref{c:coikal-set-theo} suggest other natural definitions of type C Kalman varieties obtained by changing the isotropic/coisotropic requirements for the prescribed subspace $P$ and the invariant subspace $U$.
  If we require that $U \subseteq P$, then there are, \emph{a priori}, four possible types of containments.
  If $P$ is isotropic and $U$ coisotropic then $\dim{P} \leq \dim{U}$ so this case is not possible.
  The case that we study corresponds to $P$ coisotropic, $P = W^\perp $ for some fixed $W \in \IGr(r, V)$, and $U$ coisotropic.
  The remaining cases, where $U$ is isotropic, describe well-defined algebraic subsets of $\sp(V)$; however, our methods don't apply to those algebraic sets and we won't be studying them here.
\end{remark}

\begin{remark}
  \label{r:bad-restriction}
  Our definition of isotropic Kalman varieties uses the inclusion $\IGr(m, L) \to \IGr(m + r, V)$.
  While, \emph{a priori}, other inclusions are possible, not all of them are well adapted for performing calculations using the geometric method.
  Similar to the calculation done in the type A case in \cite{hua2020:equations,sam2012:equations}, our calculation relies on the normalization having rational singularities and arbitrary inclusions of $\IGr(m, L)$ may fail to have this property.
  For example take $\iota \colon \IGr(n, L) \to \Gr(n, L)$.
  The cotangent bundle $\Omega_{\Gr(n, L)} = \sR_0 \otimes \sQ_0^\ast$ on $\Gr(n, L)$ restricts to
  \begin{equation*}
    \xi = \iota^\ast \Omega_{\Gr(n, L)} = \Sym^2\sR \oplus \bigwedge^2\sR
  \end{equation*}
  on $\IGr(n, L)$ where $\sR$ is the tautological subbundle on $\IGr(n, L)$.

  When $n = 10$ and $q = 14$ the summand $\bS_{(12, 6, 4, 2, 2, 1, 1)} \sR$ appears with multiplicity $3$ in $\bigwedge^q \xi$.
  For the tuples
  \begin{align*}
    \gamma &=  (0, 0, 0, -1, -1, -2, -2, -4, -6, -12), \\
    \gamma + \rho &=  (10, 9, 8, 6, 3, 2, -1, -4, -11)
  \end{align*}
  we obtain $|\Inv(\gamma + \rho)| = 0$, $|\Neg(\gamma + \rho)| = 2$, and $|\Nsp(\gamma + \rho)| = 13$.
  From \eqref{def:len-type-c} and Theorem~\ref{t:bwb:iso}, we conclude that
  $\rH^{15} \left( \IGr(n, L), \bS_{(12, 6, 4, 2, 2, 1, 1)} \sR \right) = \bS_{[1, 1, 1, 1]}L =  \bigwedge^4 L / \bigwedge^2 L$
  appears as a summand in $\rH^{15}(\IGr(n, L), \bigwedge^{14} \xi)$.
  In particular, the bundle $\bigwedge^q \xi$ has cohomology in degree strictly larger than $q$.
  Thus the normalization of the variety defined by $\xi$ fails to have rational singularities.
\end{remark}

\section{Geometric properties of isotropic Kalman varieties}
\label{sec:geom-props}

In this section, we prove various geometric properties of isotropic Kalman varieties.

\begin{proposition}
  \label{p:ikal-irrd}

  The isotropic Kalman variety $\IKal(m)$ is irreducible.

\end{proposition}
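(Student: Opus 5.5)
The plan is the standard one: $\IKal(m)$ is the image of an irreducible variety under a morphism, so it is irreducible. By Definition~\ref{def:ikal}, $\IKal(m) = \pi(Z)$, where $Z$ is the total space of the vector bundle $\sS$ over $X = \IGr(m, L)$ and $\pi$ is the second projection $\IGr(m, L) \times \sp(V) \to \sp(V)$. Two facts remain to be checked.

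\textbf{Step 1: $Z$ is irreducible.} The isotropic Grassmannian $\IGr(m, L)$ is irreducible. It is a homogeneous space $\Sp(L)/P$ for a parabolic subgroup $P$, hence a smooth connected projective variety, and smooth plus connected gives irreducible. Since $\sS$ is a subbundle of the trivial bundle $\sO_X \otimes \sp(V)$, it is locally free of constant rank, being the kernel of the surjection onto the locally free sheaf $\sT = \xi^\ast$. Its total space $Z$ is therefore locally isomorphic to $U \times \bA^k$ over an open cover $\{U\}$ of $X$. Each such chart is irreducible, and any two nonempty charts meet because $X$ is irreducible. Hence $Z$ is irreducible, and in fact smooth and connected.

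\textbf{Step 2: the image is closed.} The map $\pi$ is proper because $\IGr(m, L)$ is projective, so $\pi$ is a base change of the proper map $\IGr(m,L) \to \mathrm{pt}$. Since $Z$ is closed in $\IGr(m,L) \times \sp(V)$, its image $\pi(Z)$ is closed in $\sp(V)$. The continuous image of an irreducible space is irreducible, so $\IKal(m)$ is an irreducible closed subset of $\sp(V)$. With its reduced structure it is an irreducible variety.

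There is no real obstacle here. The only point needing care is that the irreducibility of $\IGr(m, L)$ and the constant rank of $\sS$ are implicit in the construction; both follow from $\iota^\ast \Omega_Y$ being a vector bundle. If one preferred to work with the set-theoretic description (Proposition~\ref{p:ikal-set-theo}), one could also show that $\IKal(m)$ is the image of $\Sp(L) \times \mathfrak p$ under $(g, \varphi) \mapsto g\varphi g^{-1}$, where $\mathfrak p$ is the linear space of symplectic endomorphisms stabilizing a fixed flag $W \subseteq U_0 \oplus W$. That is again the image of an irreducible variety. I would present the bundle argument above, since it uses only the definition.
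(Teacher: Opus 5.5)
Your argument is correct and is the paper's: $Z$ is the total space of a vector bundle over the irreducible $\IGr(m,L)$, so it is irreducible, and $\IKal(m)=\pi(Z)$ is irreducible as its image; your properness step, showing $\pi(Z)$ is closed in $\sp(V)$, fills in a point the paper leaves implicit. One caution on the aside you do not use: $\mathfrak p$ should be the stabilizer of $U_0\oplus W$ alone, because also requiring $\varphi(W)\subseteq W$ would cut the image down to $\IKal(0)$.
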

\begin{proof}
  The total space of a vector bundle over an irreducible variety is irreducible, so $\IGr(m, L) \times \sp(V)$ and its subbundle $Z$ are irreducible.
  Since $\IKal(m) = \pi(Z)$ is defined to be the image of an irreducible variety, it is irreducible.
\end{proof}

\subsection{Set theoretic description}

\begin{figure}[h]
  \begin{equation*}
    \begin{tikzpicture}[scale=0.7, every node/.style={scale=0.7}]
  % \draw[step=1cm,ProcessBlue!50,very thin] (-2,-2) grid (12, 12);
  \draw[thick] (0, 0) rectangle (10, 10);

  \draw[thick]
  (5, -0.5) -- (5, 10.5)
  (-0.5, 5) -- (10.5, 5);

  \draw[dotted]
  (3, -0.5) -- (3, 10.5) (-0.5, 3) -- (10.5, 3)
  (7, -0.5) -- (7, 10.5) (-0.5, 7) -- (10.5, 7)
  ;

  \begin{scope}[|-|, yshift=0.6cm, shorten >= 2pt, shorten <= 2pt]
    \draw (0, 10) to node[midway, above] {$R$}       (3, 10);
    \draw (3, 10) to node[midway, above] {$W$}       (4, 10);
    \draw (4, 10) to node[midway, above] {$R^\perp / R$} (6, 10);
    \draw (6, 10) to node[midway, above] {$W^\ast$}     (7, 10);
    \draw (7, 10) to node[midway, above] {$R^\ast$}     (10, 10);
  \end{scope}

  \begin{scope}[|-|, xshift=-0.6cm, shorten >= 2pt, shorten <= 2pt]
    \draw (0, 0) to node[midway, left] {$R^\ast$}     (0, 3);
    \draw (0, 3) to node[midway, left] {$W^\ast$}     (0, 4);
    \draw (0, 4) to node[midway, left] {$R^\perp / R$} (0, 6);
    \draw (0, 6) to node[midway, left] {$W$}       (0, 7);
    \draw (0, 7) to node[midway, left] {$R$}       (0, 10);
  \end{scope}

  \begin{scope}[opacity=0.8, White]
    \fill
    % [Blue!25]
    (0, 0) rectangle (4, 4)
    (6, 6) rectangle (10, 10)
    ;

    \fill
    % [Red!25]
    (0, 6) rectangle (4, 10)
    (6, 0) rectangle (10, 4)
    ;

    \fill
    [White, opacity=0.95]
    (4, 4) rectangle (6, 6);

    \fill
    % [Yellow!25]
    (0, 4) rectangle (4, 6)
    (4, 0) rectangle (6, 4)
    (6, 4) rectangle (10, 6)
    (4, 6) rectangle (6, 10)
    ;

    \fill[pattern=crosshatch, pattern color=Red!20]
    (0, 0) rectangle (4, 6);

    \fill[Blue!10, opacity=0.8]
    (4, 0) rectangle (6, 4);
  \end{scope}

  \draw[dashed]
  (4, -0.5) -- (4, 10.5) (-0.5, 4) -- (10.5, 4)
  (6, -0.5) -- (6, 10.5) (-0.5, 6) -- (10.5, 6)
  ;

  \draw
  (2, 8) node {$\gl(R \oplus W)$}
  (2, 2) node {$\Sym^2(R \oplus W)^\ast$}
  (2, 5) node[above] {$(R \oplus W)^\ast \otimes (R^\perp / R)$}

  (5, 2) node[rotate=90, above] {$(R^\perp / R) \otimes (R \oplus W)^\ast$}
  (5, 5) node[above] {$\sp(R^\perp / R)$}
  (5, 8) node[rotate=90, above] {$(R^\perp / R) \otimes (R \oplus W)$}

  (8, 2) node {$\gl(R^\ast \oplus W^\ast)$}
  (8, 5) node[above] {$(R \oplus W) \otimes (R^\perp / R)$}
  (8, 8) node {$\Sym^2(R \oplus W)$}
  ;

\end{tikzpicture}
%%% Local Variables:
%%% mode: LaTeX
%%% TeX-master: "../2025-gkp-isotropic-kalman"
%%% End:
  \end{equation*}
  \caption{%
    \label{fig:set-theo-blocks}
    Block decomposition of an arbitrary symplectic endomorphism $\varphi$ with respect to the decomposition $V = R \oplus W \oplus (R^\perp / R) \oplus W^\ast \oplus R^\ast$.
    The four quadrants defined by the solid lines correspond to $A$, $B$, $C$, and $D$ from \eqref{eq:symplectic-blocks}.
    The colored blocks are used in proof of Proposition~\ref{p:ikal-set-theo} and the decomposition \eqref{eq:lag-sp-blocks} is simply a reorganization of the blocks here.
  }
\end{figure}

\begin{proposition}
  \label{p:ikal-set-theo}
  As a subvariety of $\sp(V), $the isotropic Kalman variety is given by
  \begin{equation*}
    \IKal(m) = \{ \varphi \in \sp(V) \mid \exists\, U \in \IGr(m,L), \varphi(U \oplus W) \subseteq U \oplus W\}.
  \end{equation*}
  In other words, the isotropic Kalman variety consists of all matrices in $\sp(V)$ that fix an $m + r$-dimensional isotropic subspace in $V$ containing $W$.
\end{proposition}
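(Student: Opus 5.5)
The plan is to identify the fibre of $Z$ over each point of $\IGr(m, L)$ with the stabilizer in $\sp(V)$ of the corresponding isotropic subspace; the statement then follows by projecting to $\sp(V)$. Concretely, I will show that for $R \in \IGr(m, L)$ the fibre of $\sS$ over $R$ is
\begin{equation*}
  \sS_R = \{\varphi \in \sp(V) \mid \varphi(R \oplus W) \subseteq R \oplus W\} = \mathfrak{p}_{R \oplus W},
\end{equation*}
the parabolic subalgebra stabilizing $P = \iota(R) = R \oplus W$. Granting this, $Z = \{(R, \varphi) \mid \varphi(R \oplus W) \subseteq R \oplus W\}$. Its image under $\pi$ is exactly the displayed set, because $U \mapsto U \oplus W$ is a bijection from $\IGr(m, L)$ onto the $(m+r)$-dimensional isotropic subspaces of $V$ containing $W$. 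For this last point, note that such a subspace $P$ lies in $W^\perp = L \oplus W$ and contains $W$, so $P = (P \cap L) \oplus W$ with $P \cap L$ isotropic of dimension $m$.

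To identify $\sS_R$, I will use that $\sT = \xi^\ast = \iota^\ast T_Y$. The surjection $\sO_X \otimes \sp(V) \to \sT$ is the pullback along $\iota$ of the infinitesimal action map $\sO_Y \otimes \sp(V) \to T_Y$. At a point $P \in Y$ this action map sends $\varphi$ to the tangent vector $\overline{\varphi|_P}$ in
\begin{equation*}
  T_P Y \subseteq \mathrm{Hom}(P, V/P).
\end{equation*}
Here $T_PY$ is the dual of the cotangent description \eqref{eq:iso-cotangent-ses}, namely an extension of $P^\ast \otimes (P^\perp/P)$ by $\Sym^2 P^\ast$. This map is surjective because $\Sp(V)$ acts transitively on $Y$, and its kernel is visibly $\{\varphi \mid \varphi(P) \subseteq P\}$. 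Hence $\sS_R = \ker(\sp(V) \to T_{R\oplus W}Y) = \mathfrak{p}_{R\oplus W}$.

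As a consistency check I will use the block decomposition of Figure~\ref{fig:set-theo-blocks} with respect to $V = R \oplus W \oplus (R^\perp/R) \oplus W^\ast \oplus R^\ast$. The condition $\varphi(R\oplus W) \subseteq R \oplus W$ says that the crosshatched block $\Sym^2(R\oplus W)^\ast$ and the shaded block $(R^\perp/R) \otimes (R\oplus W)^\ast$ vanish; the symplectic symmetry then forces the transposed blocks to vanish as well. These two blocks are precisely the fibres of the dual of the two outer terms of \eqref{eq:iso-cotangent-ses-restr}, and a dimension count confirms that the quotient $\sp(V)/\mathfrak{p}_{R \oplus W}$ has rank equal to $\operatorname{rank}\xi$.

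The main obstacle I expect is making sure that the map $\sO_X \otimes \sp(V) \to \sT$ used implicitly in Definition~\ref{def:ikal} really is the action map, rather than merely some surjection with the right ranks. I will argue that the inclusion $\xi \hookrightarrow \sO_X \otimes \sp(V)^\ast \cong \sO_X \otimes \sp(V)$ (the identification via the trace form, or via $\sp(V) \cong \Sym^2 V$) is the $\Sp(V)$-equivariant dual of the action map on $Y$, pulled back along $\iota$. Since $\iota$ is equivariant for the stabilizer of $W$ and $L$, and this group contains $\Sp(L)$, which is transitive on $X$, it then suffices to verify the kernel computation at a single point $R$ spanned by standard basis vectors. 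That verification is exactly the block computation above.
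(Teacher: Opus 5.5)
Your proposal is correct and follows the paper's argument. Both identify the fibre of $\sS$ over $R$ with the stabilizer $\{\varphi \in \sp(V) \mid \varphi(R\oplus W)\subseteq R\oplus W\}$, so that $Z$ is the incidence variety, and then project to $\sp(V)$. The paper reads the fibre off the block decomposition of Figure~\ref{fig:set-theo-blocks}, while you also justify it by recognizing $\sO_X\otimes\sp(V)\to\xi^\ast$ as the pullback of the infinitesimal action map on $\IGr(m+r,V)$. You also make explicit the bijection $U\mapsto U\oplus W$ behind the ``in other words'' clause, which the paper leaves implicit.

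One harmless slip: dualizing \eqref{eq:iso-cotangent-ses} makes $P^\ast\otimes(P^\perp/P)$ the subspace and $\Sym^2P^\ast$ the quotient of $T_PY$. That is the reverse of what ``an extension of $P^\ast\otimes(P^\perp/P)$ by $\Sym^2P^\ast$'' usually means, but your argument only uses $T_PY\subseteq\mathrm{Hom}(P,V/P)$.
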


\begin{proof}
  Decompose an arbitrary matrix of $\sp(V)$ using Figure~\ref{fig:set-theo-blocks}.
  Since $\xi = (\sE / \sS)^\ast$, the quotient bundle $(\sE / \sS)$ fits in the short exact sequence \eqref{eq:iso-cotangent-ses-restr} and the total space of $\sS$ consists of matrices where the shaded parts (red, hatched) vanish (note that the solid colored parts in blue are automatically forced to vanish).
  These are precisely the linear maps that send $R \oplus W$ to itself.
  So, the total space $Z$ of the vector bundle $\sS$ is given by
  \begin{equation*}
    Z = \{(U,\varphi) \in \IGr(m,L) \times \sp(V) \mid \varphi(U \oplus W) \subseteq U \oplus W \}.
  \end{equation*}
  Using Definition~\ref{def:ikal} and, in particular, \eqref{eq:ikal-basic-diag},
  \begin{equation*}
    \IKal(m) =
    \pi(Z) =
    \{\varphi \in \sp(V) \, | \, \exists \, U \in \IGr(m,L), \varphi(U \oplus W) \subseteq U \oplus W \}. \qedhere
  \end{equation*}
\end{proof}

\begin{corollary}
  \label{c:coikal-set-theo}
  As a subvariety of $\sp(V)$, the coisotropic Kalman variety is given by
  \begin{equation*}
    \CoIKal(\widetilde m) =
    \{ \varphi \in \sp(V) \;\vert\; \exists \widetilde U \in \CoIGr(\widetilde m,L), \varphi(\widetilde U \oplus W) \subseteq \widetilde U \oplus W\}
  \end{equation*}
  where $n \leq \widetilde m \leq 2n$.
\end{corollary}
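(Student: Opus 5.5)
The plan is to deduce Corollary~\ref{c:coikal-set-theo} from Proposition~\ref{p:ikal-set-theo}, using the isomorphism $\IGr(m, L) \cong \CoIGr(2n - m, L)$ from Remark~\ref{r:coiso}, given by $U \mapsto \widetilde U = U^{\perp_L}$. Here $\perp_L$ denotes the symplectic complement inside $L$. Set $\widetilde m = 2n - m$; as $m$ ranges over $0 \le m \le n$, $\widetilde m$ ranges over $n \le \widetilde m \le 2n$.

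The first step is to run the construction of \S\ref{sec:restriction-defn} for the coisotropic Grassmannian. Take the inclusion $\CoIGr(\widetilde m, L) \hookrightarrow \CoIGr(\widetilde m + r, V)$ given by $\widetilde U \mapsto \widetilde U \oplus W$, and pull back the cotangent bundle, which is again the conormal description of the tangent space. Repeating the block analysis of Figure~\ref{fig:set-theo-blocks}, now with respect to the decomposition $V = \widetilde U^{\perp} \oplus \dots$, shows the following. The total space of the kernel bundle $\sS$ consists of the pairs $(\widetilde U, \varphi)$ such that $\varphi$ preserves $\widetilde U \oplus W$. This is the same argument as in Proposition~\ref{p:ikal-set-theo}, because a cotangent fiber at a point $P$ of a (co)isotropic Grassmannian is the annihilator in $\sp(V)$ of the stabilizer of $P$. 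Projecting to $\sp(V)$ then gives the stated description of $\CoIKal(\widetilde m)$.

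Alternatively, one can check directly that the two varieties coincide. The key linear-algebra fact is that for $\varphi \in \sp(V)$ and any subspace $P$, we have $\varphi(P) \subseteq P$ if and only if $\varphi(P^\perp) \subseteq P^\perp$. Indeed, $\langle \varphi x, y\rangle = -\langle x, \varphi y\rangle$. Now take $U \in \IGr(m, L)$. Inside $V$, the symplectic complement satisfies $(U \oplus W)^\perp = U^{\perp_L} \oplus W$, because $W^\perp = L \oplus W$ and $W \subseteq U^\perp$. Hence $\varphi$ preserves $U \oplus W$ if and only if it preserves $\widetilde U \oplus W$. The dimension count is $2(n + r) - (m + r) = \widetilde m + r$, as required.

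The main obstacle is the identification $(U \oplus W)^\perp = U^{\perp_L} \oplus W$, since $L$ and $W$ interact through $W^\ast$. To handle it, I would compute in the basis $L \oplus W \oplus W^\ast$. A vector orthogonal to $W$ lies in $L \oplus W$. Its $L$-component must then be orthogonal to $U$, because $L \perp W$. This gives one inclusion, and the dimensions agree.
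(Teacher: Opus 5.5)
Your proposal is correct and matches what the paper intends. The paper gives no separate proof of this corollary: it deduces it from Proposition~\ref{p:ikal-set-theo} via the complement isomorphism $\IGr(m, L) \cong \CoIGr(2n - m, L)$ of Remark~\ref{r:coiso} (equivalently, by ``taking symplectic complements'' as in the introduction), and your facts that $\varphi \in \sp(V)$ preserves $P$ if and only if it preserves $P^\perp$, and that $(U \oplus W)^\perp = U^{\perp_L} \oplus W$, are exactly the details that deduction needs, both verified correctly.
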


\subsection{Defining equations}
Recall that the defining equations for the type A Kalman variety were given by vanishing of minors of the type A reduced Kalman matrix $\Theta^\bA(\varphi)$.
We introduce an analogous criteria in the isotropic case.
Any $\varphi \in \sp(V)$ can be written as a block matrix
\begin{equation}
  \label{eq:lag-sp-blocks}
  \varphi =
  \begin{bmatrix}
    {A_0} & {A_1} & {B_0} \\
    {A_2} & {A_3} & {B_1} \\
    {C_0} & {C_1} & {D_0}
  \end{bmatrix}
\end{equation}
with respect to the decomposition $V=L\oplus W\oplus W^*$.
We define the type C reduced Kalman matrix associated to $\varphi$ as the block matrix
\begin{equation}
  \label{eq:type-c-obs-mat}
  \Theta^{\bC}(\varphi) =
  \begin{bmatrix}
    C_0 &  C_0  A_0 & \cdots &  C_0  A_0^{2n - 1}
  \end{bmatrix}^\intercal.
\end{equation}
Note that $\Theta^\bC(\varphi) \colon L \to \widetilde{W}$ is a $2n \times r (2n -1)$ matrix, where $\widetilde{W} = {(W^*)}^{\oplus 2n  - 1}$.
The rest of this subsection will be spent proving the following main result.

\begin{theorem}\label{t:defn-eq}
  Let $\varphi \in \sp(V)$ and consider its block decomposition as given in \eqref{eq:lag-sp-blocks} and let $\Theta = \Theta^\bC(\varphi)$ be the type C reduced Kalman matrix associated to $\varphi$, then $\varphi \in \IKal(m)$ if and only if the $(m + 1) \times (m + 1)$ minors of $\Theta$  vanish, $C_1 = 0$, and $C_0 A_0^k A_1 = 0$  for $0 \leq k \leq 2n - 1$.
\end{theorem}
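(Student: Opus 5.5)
We will reduce the statement to linear algebra on $L$ via Proposition~\ref{p:ikal-set-theo}: $\varphi\in\IKal(m)$ if and only if there is an isotropic $U\subseteq L$ with $\dim U=m$ and $\varphi(U\oplus W)\subseteq U\oplus W$.

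\emph{Translating invariance into blocks.} Using the decomposition \eqref{eq:lag-sp-blocks}, the condition $\varphi(W)\subseteq U\oplus W$ means exactly that $C_1=0$ and $\operatorname{im}A_1\subseteq U$. The condition $\varphi(U)\subseteq U\oplus W$ means exactly that $C_0U=0$ and $A_0U\subseteq U$.

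\emph{Consequences of $\varphi\in\sp(V)$.} Write out $\langle\varphi x,y\rangle+\langle x,\varphi y\rangle=0$ in two cases.
\begin{enumerate*}[label=(\roman*)]
\item For $x,y\in L$ we get $A_0\in\sp(L)$, because $L\perp W\oplus W^\ast$.
\item For $x\in W$, $y\in L$ we get $\langle A_1w,l\rangle=-\langle w,C_0l\rangle$, so $A_1$ is, up to sign, the symplectic adjoint of $C_0$. Consequently $\operatorname{im}A_1=(\ker C_0)^\perp$ inside $L$.
\end{enumerate*}

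\emph{The key subspace $K$.} Let $K=\ker\Theta=\bigcap_{k=0}^{2n-1}\ker(C_0A_0^k)$. By Cayley--Hamilton, $K$ is the largest $A_0$-invariant subspace contained in $\ker C_0$. Since $A_0\in\sp(L)$, the complement $K^\perp$ is the smallest $A_0$-invariant subspace containing $(\ker C_0)^\perp=\operatorname{im}A_1$. Explicitly, $K^\perp=\sum_k A_0^k\operatorname{im}A_1$, using that the adjoint of $A_0$ is $-A_0$. The conditions $C_0A_0^kA_1=0$ for all $k$ say precisely that $\operatorname{im}A_1\subseteq K$, equivalently $K^\perp\subseteq K$, i.e.\ $K$ is coisotropic. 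Finally, vanishing of the $(m+1)\times(m+1)$ minors means $\rank\Theta\le m$, i.e.\ $\dim K^\perp=2n-\dim K\le m$.

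\emph{Necessity.} Suppose $U$ exists. Then $C_1=0$. Since $U$ is $A_0$-invariant and lies in $\ker C_0$, we get $U\subseteq K$. Since $U$ is $A_0$-invariant and contains $\operatorname{im}A_1$, we get $K^\perp\subseteq U$. Thus $K^\perp\subseteq U\subseteq K$, which gives both that $K$ is coisotropic (so $C_0A_0^kA_1=0$) and that $\dim K^\perp\le m$ (so the minors vanish).

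\emph{Sufficiency.} Assume the three conditions. Then $K$ is coisotropic and $A_0$-invariant, and $\dim K^\perp=:p'\le m$. Hence $A_0$ induces an element of $\sp(K/K^\perp)$, where $K/K^\perp$ is symplectic of dimension $2(n-p')$. It suffices to find an $A_0$-invariant isotropic subspace of $K/K^\perp$ of dimension $m-p'\le n-p'$: its preimage $U$ is then isotropic of dimension $m$ with $K^\perp\subseteq U\subseteq K$. Such a $U$ satisfies all the block conditions from the first step.

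The remaining ingredient is a lemma: any $\psi\in\sp(E)$, with $\dim E=2q$, has invariant isotropic subspaces of every dimension $\le q$. Take an eigenvector $v$; the line $\langle v\rangle$ is automatically isotropic and $\psi$-invariant. Because $\psi\in\sp(E)$, the complement $v^\perp$ is also invariant, so $\psi$ descends to $\sp(v^\perp/v)$. Induct and pull back to obtain a chain of invariant isotropic subspaces of each dimension.

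The main obstacle I expect is the bookkeeping in the adjointness step: checking the sign and index conventions of the antidiagonal form so that $\operatorname{im}A_1=(\ker C_0)^\perp$ and $K^\perp=\sum_k A_0^k\operatorname{im}A_1$ hold exactly. Once these hold, the rank bound $\rank\Theta\le m$ (rather than the weaker bound $2n-m$ one might first expect) falls out of $K^\perp\subseteq U$.
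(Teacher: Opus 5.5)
Your argument is correct. It shares the paper's skeleton: take $K=\ker\Theta$ as the largest $A_0$-invariant subspace of $\ker C_0$, pass to its radical, and build the invariant isotropic subspace by an inductive eigenvector argument. The execution, however, is genuinely different.

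\textbf{The paper's route.} The paper embeds the problem in type A. It takes the Kalman matrix $\Theta'$ of the map $L\oplus W\to W^\ast$ and imports from Ottaviani--Sturmfels that $\varphi$ fixes $\ker\Theta'$. It reads $C_1=0$ and $C_0A_0^kA_1=0$ as $W\subseteq\ker\Theta'$, so that $\ker\Theta'=\ker\Theta\oplus W$. It then works in $V$ with the invariant subspace $\rad(\ker\Theta)\oplus W$ and its complement $\rad(\ker\Theta)^\perp\oplus W$. To cut down to the right dimension it invokes the coisotropic Lemma~\ref{l:sg-co-iso}, which rests on Lemma~\ref{l:sg}. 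The necessity direction is left largely implicit.

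\textbf{Your route.} You stay inside $L$ and use the adjointness relation $\langle A_1w,l\rangle=-\langle w,C_0l\rangle$ to get $K^\perp=\sum_kA_0^k\operatorname{im}A_1$. This yields a clean dictionary that the paper never states. The equations $C_0A_0^kA_1=0$ say exactly that $K$ is coisotropic, so $\rad K=K^\perp$. The vanishing of the minors says exactly that $\dim K^\perp\le m$.

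Everything then reduces to the sandwich $K^\perp\subseteq U\subseteq K$. Necessity is immediate, and it shows transparently why the rank bound is $m$ rather than the $2n-m$ you would get from $U\subseteq K$ alone. Sufficiency needs only the elementary Lemma~\ref{l:sg-lite}, applied to the induced element of $\sp(K/K^\perp)$. All the block conditions on $U$ ($C_0U=0$, $A_0U\subseteq U$, $\operatorname{im}A_1\subseteq U$) then hold automatically.

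\textbf{Comparison.} The paper's route buys continuity with the type A result and with the coisotropic description of $\IKal(m)$ from the introduction. Yours is self-contained, handles both directions explicitly, and makes the role of the symplectic form visible.

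The one convention you rely on is $L\perp W\oplus W^\ast$. It is built into the paper's symplectic-basis decomposition (Figure~\ref{fig:set-theo-blocks}), so it is harmless, but you should state it up front.
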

Before we give the proof, we first prove some results concerning symplectic vector spaces.

\begin{lemma}
  Suppose $U$ is a symplectic vector space of dimension $2n$ and $\varphi \in \sp(U)$.
  For every $k \leq n$, there exists a subspace $K \in \IGr(k,U)$ such that $\varphi(K) \subseteq K$.
  \label{l:sg-lite}
\end{lemma}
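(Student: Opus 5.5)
We argue by induction on $k$, building an isotropic flag of $\varphi$-invariant subspaces $0 = K_0 \subset K_1 \subset \dots \subset K_n$ with $\dim K_i = i$. The tool for each step is the following observation. If $K \subseteq U$ is isotropic and $\varphi$-invariant, then $K^\perp$ is also $\varphi$-invariant. Indeed, for $v \in K^\perp$ and $u \in K$ we have
\begin{equation*}
  \langle \varphi v, u\rangle = -\langle v, \varphi u\rangle = 0,
\end{equation*}
since $\varphi u \in K$. Here we use that $\varphi \in \sp(U)$ means exactly $\langle \varphi x, y\rangle + \langle x, \varphi y\rangle = 0$, which is equivalent to the block description \eqref{eq:symplectic-blocks}.

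The case $k = 0$ is trivial. Suppose $K \in \IGr(k, U)$ is $\varphi$-invariant with $k < n$. Then $K \subseteq K^\perp$, both subspaces are $\varphi$-invariant, and so $\varphi$ descends to an endomorphism $\bar\varphi$ of $\bar U = K^\perp/K$. This quotient has dimension $2n - 2k > 0$, and the form induced on it is nondegenerate and symplectic, with $\bar\varphi \in \sp(\bar U)$.

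Since we work over $\bC$, the endomorphism $\bar\varphi$ has an eigenvector $\bar v \in \bar U$. Every line in a symplectic space is isotropic, because $\langle v, v\rangle = 0$ for an alternating form. Let $K' = K + \bC v \subseteq K^\perp$, where $v$ is any lift of $\bar v$. Then $K'$ has dimension $k+1$. It is isotropic: $\langle v, v\rangle = 0$, $v \perp K$ because $v \in K^\perp$, and $K$ is itself isotropic. It is $\varphi$-invariant: $\varphi v \in \lambda v + K$ because $\bar v$ is an eigenvector, and $\varphi(K) \subseteq K$. Iterating this step up to $k$ gives the claim for every $k \leq n$.

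The only point needing care is the reduction step: checking that $K^\perp$ is invariant and that the induced form on $K^\perp/K$ is nondegenerate. The latter holds because the radical of the restricted form on $K^\perp$ is $K^\perp \cap K^{\perp\perp} = K^\perp \cap K = K$. Both facts are routine, so we expect no real obstacle. Algebraic closure is used only to produce the eigenvector.
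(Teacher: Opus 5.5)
Your proof is correct and is essentially the paper's argument: both combine an eigenvector of $\varphi$ with passage to the reduction $K^\perp/K$. The paper inducts on $\dim U$, splitting off an eigenline $R$ first and recursing in $R^\perp/R$, while you build the invariant isotropic flag upward by taking an eigenline in $K^\perp/K$; your version also checks explicitly that $K^\perp$ is $\varphi$-invariant, which the paper uses without comment when passing to the quotient.
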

\begin{proof}
  We will prove this by induction on $\dim U$. For the base case ($\dim U = 2$), we must have $k = 1$.
  As $\varphi$ must have an eigenvector in $U$ whose isotropic in $U$, the statement follows.

  For the inductive step, we consider the subspace $R$ spanned by an eigenvector of $\varphi$.
  The symplectic form is well-defined on the symplectic space $R^\perp / R$.
  Further, $R^\perp /R$ is properly contained in $U$.
  By the inductive hypothesis, $R^\perp / R$ must contain an invariant isotropic subspace of dimension $k-1$.
  This subspace must be of the form $K/R$, where $R^\perp \supseteq K \supseteq R$.
  As $K/R$ is isotropic in $R^\perp/R$ and $R$ annihilates $R^\perp$, $K \subseteq U$ is isotropic and $\dim{K} = k - 1 + \dim R = k$.
  Further, because $ K/R$ is invariant under $\varphi$, $K$ is also invariant under $\varphi$.
  This completes the proof.
\end{proof}

\begin{lemma}
  Suppose $U$ is a symplectic vector space of dimension $2n$ and $\varphi \in \sp(U)$.
  If $\varphi(\widetilde{U}) \subseteq \widetilde{U}$ for some $\widetilde{U} \subseteq U$ and there exists $K \in \IGr(k,U)$ satisfying $K \subseteq \widetilde{U}$ for all $k > 0$, then there is a $\varphi$-invariant subspace $K_0 \in \IGr(k,U)$ such that $\varphi(K_0) \subseteq K_0$ and $K_0 \subseteq \widetilde{U}$.
  \label{l:sg}
\end{lemma}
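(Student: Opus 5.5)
The plan is to reduce to Lemma~\ref{l:sg-lite} by quotienting $\widetilde U$ by its radical. I read the statement as follows: $\widetilde U$ is $\varphi$-invariant and contains some isotropic subspace of dimension $k$; we want a $\varphi$-invariant isotropic $K_0$ of dimension $k$ inside $\widetilde U$.

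\emph{The radical and the induced symplectic space.} Set $N = \widetilde U \cap \widetilde U^\perp$ and $\rho = \dim N$. Since $\varphi \in \sp(U)$, we have $\langle \varphi v, w\rangle = -\langle v, \varphi w\rangle$. Hence $\varphi(\widetilde U)\subseteq \widetilde U$ implies $\varphi(\widetilde U^\perp)\subseteq \widetilde U^\perp$, so $N$ is $\varphi$-invariant. The form restricted to $\widetilde U$ descends to a nondegenerate form on $\ol U = \widetilde U/N$, which is therefore symplectic of dimension $2p$, where $2p = \dim\widetilde U - \rho$. The induced map $\ol\varphi$ preserves $\ol U$ and still satisfies $\langle \ol\varphi \bar v, \bar w\rangle = -\langle \bar v, \ol\varphi \bar w\rangle$, so $\ol\varphi \in \sp(\ol U)$.

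\emph{Bounding $k$.} This is the step where the hypothesis enters, and the one to get right. Let $K\subseteq \widetilde U$ be isotropic with $\dim K = k$. Its image $\ol K$ in $\ol U$ is isotropic, and $\dim \ol K = k - \dim(K\cap N) \ge k-\rho$. Since $\ol U$ is symplectic of dimension $2p$, this gives $k - \rho \le p$.

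\emph{Case $k\le \rho$.} Every subspace of $N$ is isotropic, because $N\subseteq \widetilde U^\perp$ and $N \subseteq \widetilde U$. Over $\bC$, $\varphi|_N$ can be triangularized, so it has an invariant complete flag. Take $K_0$ to be the $k$-dimensional member of that flag.

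\emph{Case $k>\rho$.} By the bound above, $1\le k-\rho\le p$. Apply Lemma~\ref{l:sg-lite} to $\ol\varphi\in\sp(\ol U)$ to obtain an $\ol\varphi$-invariant isotropic $\ol K_0\subseteq \ol U$ with $\dim \ol K_0 = k-\rho$. Let $K_0\subseteq \widetilde U$ be its preimage, so $\dim K_0 = k$ and $K_0$ is $\varphi$-invariant. To see that $K_0$ is isotropic, take $v,w\in K_0$. Their pairing equals the pairing of their classes in $\ol U$, which is zero, since $N$ pairs trivially with all of $\widetilde U$ and so the form on $\widetilde U$ factors through $\ol U$. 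Hence $K_0 \in \IGr(k,U)$ and $K_0\subseteq\widetilde U$, as required.
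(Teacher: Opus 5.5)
Your proof is correct and takes the same route as the paper's: pass to the symplectic quotient $\widetilde U/\rad(\widetilde U)$, apply Lemma~\ref{l:sg-lite} there, and pull back. The paper phrases this as an induction on $\dim\widetilde U$, but that induction collapses immediately to Lemma~\ref{l:sg-lite}, and your version is more complete: you verify that the radical is $\varphi$-invariant, derive the bound $k-\rho\le p$ needed to invoke Lemma~\ref{l:sg-lite}, and treat the case $k\le\rho$, none of which the paper spells out (and you avoid its incorrect assertion that $\rad(\widetilde U)=0$ forces $\widetilde U=U$).
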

\begin{proof}
  We will first prove the statement by induction on $\dim \widetilde{U}$.
  For the base case, $\dim \widetilde{U} = 1$, $k = 1$ implies $K = \widetilde{U}$.
  Since $\widetilde{U}$ is invariant under $\varphi$, $K$ is also invariant under $\varphi$ and we set $K_0 = K$.

  For the inductive step, we consider the symplectic space $\widetilde{U}/\rad(\widetilde{U})$, where
  \begin{equation*}
    \rad(\widetilde{U})
    = \{w \in \widetilde{U} \mid \langle w, w' \rangle = 0 \text{ for all } w' \in \widetilde{U}\}.
  \end{equation*}
  The vector space $\widetilde{U}/\rad(\widetilde{U})$ is a symplectic space and the restriction of the symplectic form of $U$ on it is well-defined.
  If $\rad(\widetilde{U}) \neq 0$, we can use the inductive hypothesis on $\widetilde{U}/\rad(\widetilde{U})$ to conclude that there must exist an isotropic subspace in $\widetilde{U}/\rad(\widetilde{U})$ that is $\varphi$-invariant and has dimension $k-\dim \rad(\widetilde{U})$.
  This subspace must be of the form $K_0/\rad(\widetilde{U})$, where $K_0$ is invariant under $\varphi$ and belongs to $\IGr(k,U)$.
  This proves the claim in the case where $\rad(\widetilde{U}) \neq 0$.
  If $\rad(\widetilde{U}) = 0$, then $\widetilde{U} = U$ and the claim follows from Lemma~\ref{l:sg-lite}.
\end{proof}

\begin{remark}
  \label{r:co-iso-sup-lag}
  Note that a subspace of a symplectic vector space $U$ being coisotropic is equivalent to it containing a Lagrangian subspace.
  By definition, $K \subseteq U$ is coisotropic if and only if $K^\perp$ is isotropic.
  Moreover, $K^\perp$ is isotropic if and only if it is contained inside a Lagrangian subspace $S$ of $U$.
  Since taking symplectic complements reverses containments and $S = S^\perp$  is Lagrangian, the last condition is equivalent to $S \subseteq K$.
\end{remark}

Based on the remark above, we prove the coisotropic analogue of Lemma \ref{l:sg}.

\begin{lemma}
  Suppose $U$ is a symplectic vector space of dimension $2n$ and $\varphi \in \sp(U)$.
  If $\varphi(\widetilde{U}) \subseteq \widetilde{U}$ for some $\widetilde{U} \in \CoIGr(2n-t,U),$ then, for any $t \leq k \leq n$, there exists a $K \in  \CoIGr(2n-k,U)$ such that $K \subseteq \widetilde{U}$ and $\varphi(K) \subseteq K$.
  \label{l:sg-co-iso}
\end{lemma}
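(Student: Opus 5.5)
Pass to symplectic complements and reduce to the isotropic statement, Lemma~\ref{l:sg}, in the spirit of Remark~\ref{r:co-iso-sup-lag}. Set $P = \widetilde U^\perp$. Since $\widetilde U$ is coisotropic of dimension $2n - t$, the space $P$ is isotropic of dimension $t$.

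The first step is to check that $P$ is $\varphi$-invariant. For $\varphi \in \sp(U)$ we have $\langle \varphi x, y\rangle = -\langle x, \varphi y\rangle$. So if $x \in P$ and $y \in \widetilde U$, then $\langle \varphi x, y \rangle = -\langle x, \varphi y\rangle = 0$, because $\varphi y \in \widetilde U$. Hence $\varphi(P) \subseteq P$. By the same computation, any $\varphi$-invariant subspace has a $\varphi$-invariant symplectic complement.

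It therefore suffices to find a $\varphi$-invariant isotropic $K' \in \IGr(k, U)$ with $P \subseteq K'$. Then $K = K'^\perp$ is coisotropic of dimension $2n - k$, it is $\varphi$-invariant by the observation above, and $K \subseteq P^\perp = \widetilde U$ because complements reverse inclusions.

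To construct $K'$, pass to the reduced space $P^\perp / P = \widetilde U / P$. This is a symplectic space of dimension $2(n - t)$, and $\varphi$ induces an element $\bar\varphi \in \sp(\widetilde U / P)$ since both $\widetilde U$ and $P$ are invariant. Because $0 \le k - t \le n - t$, Lemma~\ref{l:sg-lite} gives a $\bar\varphi$-invariant isotropic subspace $\bar K$ of $\widetilde U / P$ of dimension $k - t$. The case $k = t$ is trivial: take $\bar K = 0$, so $K' = P$ and $K = \widetilde U$.

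Let $K'$ be the preimage of $\bar K$ in $\widetilde U$. Then $\dim K' = k$, $P \subseteq K'$, and $K'$ is $\varphi$-invariant. For isotropy, take $x, y \in K'$. Their pairing equals the pairing of their classes in $\widetilde U / P$, because $P = \rad(\widetilde U)$ pairs trivially with all of $\widetilde U$. That pairing vanishes since $\bar K$ is isotropic. So $K'$ is isotropic, and the reduction above completes the argument.

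The only step needing care is this last one: that the form descends to $\widetilde U/P$ as a nondegenerate form and that isotropy lifts. Both follow from $\rad(\widetilde U) = \widetilde U \cap \widetilde U^\perp = P$, which holds because $\widetilde U$ is coisotropic. I do not expect a genuine obstacle.
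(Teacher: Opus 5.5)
Your proof is correct, but it follows a different route from the paper's.

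The paper argues inside $\widetilde U$ from below. It first uses Lemma~\ref{l:sg} to obtain a $\varphi$-invariant Lagrangian $\widetilde S \subseteq \widetilde U$. It then triangularizes the induced endomorphism of $\widetilde U/\widetilde S$ to get an invariant flag $v_1+\widetilde S, \dots, v_{n-t}+\widetilde S$. Finally it sets $K = \widetilde S + \Span\{v_1,\dots,v_{n-k}\}$, which is coisotropic for free because it contains a Lagrangian (Remark~\ref{r:co-iso-sup-lag}).

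You instead take complements and observe that $P = \widetilde U^\perp = \rad(\widetilde U)$ is invariant. You then reduce to the symplectic space $\widetilde U/P$, apply only Lemma~\ref{l:sg-lite} there, lift, and take complements again.

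The paper's version needs a single symplectic input, after which everything is plain triangularization. It also produces all the required $K$ at once, as a nested chain over one invariant Lagrangian. But that input is precisely the case $k=n$ of the statement, delegated to Lemma~\ref{l:sg}, and the proof of that lemma is itself the reduction modulo $\rad(\widetilde U)$ that you carry out.

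Your version makes that reduction explicit and handles every $k$ uniformly from the more basic Lemma~\ref{l:sg-lite}. Along the way it proves the isotropic form of the statement: an invariant isotropic $P$ extends to invariant isotropic subspaces of every dimension from $\dim P$ to $n$. That is the form actually used, after taking complements, in Remark~\ref{r:ikal-inclusion}.
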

\begin{proof}
  From Remark~\ref{r:co-iso-sup-lag}, we know that $\widetilde{U}$ contains a Lagrangian subspace $S$ of $U$.
  By Lemma~\ref{l:sg}, there exists a Lagrangian subspace $\widetilde{S}$ of $U$ such that  $\widetilde{S} \subseteq \widetilde{U}$ and $\varphi(\widetilde{S}) \subseteq \widetilde{S}$.
  The map $\varphi$ induces an endomorphsim on $\widetilde{U}/\widetilde{S}$ whose associated matrix is similar to an upper triangular matrix.
  Thus, there exists a basis $\{v_1 + \widetilde{S}, \dots, v_{n-t} + \widetilde{S}\}$ of $\widetilde{U}/\widetilde{S}$ such that for each $i$, $\Span\{v_1 + \widetilde{S}, \dots v_i + \widetilde{S}\}$ is an invariant subspace of $\varphi$.

  Now, for all $t \leq k \leq n$, the subspace $\widetilde{S} \oplus \Span \{v_1, \dots v_{n-k}\}$ of $U$ is a $\varphi$-invariant coisotropic subspace of dimension $2n-k$. This completes the proof.
\end{proof}

\begin{remark}
  \label{r:ikal-inclusion}
  After taking symplectic complements, Lemma \ref{l:sg-co-iso} implies that there is a chain of inclusions $\IKal(0) \subseteq \IKal(1) \subseteq \dots \subseteq \IKal(n)$.
  In Theorem~\ref{t:nonnormal} we will show that these inclusions are related to the singular and nonnormal loci of isotropic Kalman varieties.
\end{remark}

\begin{proof}[Proof of {Theorem~\ref{t:defn-eq}}]
  Let $A$ be the block corresponding to the restriction of $\varphi$ to $L \oplus W$ and $C$ be the block corresponding to the induced map from $L\oplus W$ to $W^\ast$.
  Let $\Theta'=\begin{bmatrix} C & CA & \dots & CA^{2n+r - 1} \end{bmatrix}^\intercal$ be the type A reduced Kalman matrix corresponding to $\varphi$.
  Then, the proof of \cite[Theorem~4.5]{os2012:matrices} implies that $\varphi$ fixes $\ker(\Theta')$.
  Matrix multiplication shows that $C_1 = 0$ and $C_0 A_0^k A_1 = 0$ for $0 \leq k \leq 2n - 1$ hold if and only if $W\subseteq \ker(\Theta')$ holds.
  The latter condition is satisfied if and only if $\ker(\Theta') = \ker(\Theta) \oplus W$.

  Furthermore,  the $(m + 1) \times (m + 1)$ minors of $\Theta$ vanish if and only if $\ker(\Theta)$ is at least $(2n-m)$-dimensional.
  Since $\varphi$ fixes $\ker(\Theta')$, it also fixes $\rad(\ker(\Theta'))$ and we have $\rad(\ker(\Theta'))=\rad(\ker(\Theta))\oplus W$.

  Since $\dim \ker(\Theta) \geq 2n - m$,  there is containment $\rad(\ker(\Theta)) \subseteq \ker(\Theta)^{\perp}$; this implies $\dim{\rad(\ker(\Theta))} \leq m$.
  If $\dim \rad(\ker(\Theta)) = m,$ then we are done since the radical of a subspace, by definition, is always isotropic.
  If, on the other hand, the inequality is strict, then $\rad(\ker(\Theta))^\perp\oplus W$ is an invariant coisotropic subspace of $V$ of dimension greater than $2n-m$.
  We are done once we show that this space contains an invariant coisotropic subspace of dimension $2n - m$ that is fixed by $\varphi$; this is precisely the content of Lemma \ref{l:sg-co-iso}.
\end{proof}

\begin{remark}
  \label{r:sym2W=0}
  From Theorem~\ref{t:defn-eq}, we observe that the defining equations include linear equations corresponding to $C_1 = 0$.
  This follows from the fact that $\varphi(W)\subseteq L \oplus W$ and so the variables corresponding to $W \to W^*$ must vanish on isotropic Kalman varieties.
  Thus, these varieties are actually subvarieties of the smaller affine space $\sp(V) / \Sym^2 W$.
  We may define $\IKal(m)$ using the set-up where the trivial bundle is $\sp(V) / \Sym^2 W$ and the quotient bundle $\xi$ is obtained in a similar fashion by taking a quotient by $\Sym^2W$.
  This significantly simplifies the calculations and we will rely on this reduction while working with explicitly examples.
\end{remark}

\subsection{Desingularization and dimension}
Here we show that that morphism \(\pi'\) of \eqref{eq:ikal-basic-diag} is birational and use the ranks of the bundle in \eqref{eq:iso-cotangent-ses-restr} to calculate the dimension of \(\IKal(m)\).
The birationality result will be used in conjunction with the geometric method of Theorem~\ref{t:geom-method} to show that \(\IKal(m)\) has rational singularities.

\begin{proposition}
  \label{p:birational}
  The map $\pi' \colon Z \to \IKal(m)$  is a birational isomorphism.
\end{proposition}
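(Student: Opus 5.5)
The plan is to show that $\pi'$ is injective over a dense open subset of $\IKal(m)$. Since $Z$ and $\IKal(m)$ are irreducible (Proposition~\ref{p:ikal-irrd}) and we work in characteristic zero, a morphism that is generically injective onto its image is birational. By Proposition~\ref{p:ikal-set-theo} the fiber of $\pi'$ over $\varphi$ is
\begin{equation*}
  \{ U \in \IGr(m, L) \mid \varphi(U \oplus W) \subseteq U \oplus W \}.
\end{equation*}
So it suffices to produce a nonempty open subset of $Z$ on which $\varphi$ has exactly one invariant isotropic subspace $U\oplus W$ with $U \in \IGr(m,L)$.

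The key step is to choose a point explicitly. Fix $U_0 \in \IGr(m, L)$ and work with the block decomposition of Figure~\ref{fig:set-theo-blocks} for $R = U_0$. We take $\varphi$ in the fiber of $Z$ over $U_0$ and require two things.
\begin{enumerate}
\item The induced endomorphism of $V/(U_0 \oplus W)^\perp \cong (U_0\oplus W)^\ast$ is regular semisimple, and its eigenvalues are disjoint from those of $\varphi|_{U_0\oplus W}$ and of the induced map on $U_0^\perp/U_0$. Since $\varphi$ is symplectic, the first set of eigenvalues is the negative of the spectrum of $\varphi|_{U_0\oplus W}$.
\item The off-diagonal blocks linking $L$ to $W^\ast$ (the $C_0$ part) are generic. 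Then, by the type~A Kalman analysis in the proof of Theorem~\ref{t:defn-eq}, $\ker\Theta^{\bC}(\varphi)$ has dimension exactly $2n-m$ and equals $U_0^\perp \cap L$.
\end{enumerate}
If $U \in \IGr(m,L)$ is any other fiber point, then $U\oplus W$ is invariant, so $(U \oplus W)^\perp \cap L = U^{\perp_L}$ is an invariant coisotropic subspace of $L$ containing $\ker$-type information. The argument from Theorem~\ref{t:defn-eq} shows that every invariant $U\oplus W$ must satisfy $U \subseteq \rad(\ker \Theta)$-type constraints. More concretely, $C_0$ must vanish on $U$ and on all of its $A_0$-translates, so $U \subseteq \ker\Theta = U_0^{\perp_L}$, and therefore $U^{\perp_L} \supseteq U_0$. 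Because the kernel has dimension exactly $2n-m$ and its radical is $U_0$ of dimension $m$, we get $U = U_0$. Hence the fiber over such $\varphi$ is a single reduced point. The conditions above are open in the fiber of $Z$, and they are $\Sp$-equivariant as $U_0$ varies, so they define a nonempty open subset of $Z$.

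The main obstacle is exhibiting one $\varphi$ for which $\ker\Theta$ has dimension exactly $2n-m$ and its radical is exactly $U_0$, rather than something larger. I would first try building $\varphi$ by hand. Take $A_0$ preserving $U_0$ and $U_0^\perp$, and acting with distinct generic eigenvalues on $U_0$, $U_0^\perp/U_0$, and $L/U_0^\perp$. Choose $C_0$ vanishing on $U_0$ and cyclic enough on $L/U_0$ that $\ker\Theta = U_0$. Then the nondegeneracy can be checked directly.

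As a fallback, compare dimensions instead. Compute $\dim Z = \dim\IGr(m,L) + \rank \sS$ and show that the generic fiber is finite, using the dimension count that Proposition~\ref{p:codim} will make explicit. Then upgrade finiteness to degree one by using the $\Sp(L)$-equivariance together with connectedness of the stabilizer of $U_0$.
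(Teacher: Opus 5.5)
Your route differs from the paper's and is viable in outline. The paper takes the open set $\IKal(m)\setminus\IKal(m-1)$ and proves the preimage is unique by intersecting two invariant subspaces $R\oplus W$ and $R'\oplus W$. You instead detect the preimage through $\ker\Theta^{\bC}(\varphi)$ on the locus $\rank\Theta^{\bC}(\varphi)=m$, which by Theorem~\ref{t:defn-eq} is the same open set. Done correctly, your version has the advantage of giving the inverse explicitly: $U=(\ker\Theta)^{\perp_L}$.

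However, the uniqueness step as you wrote it does not follow. From $U\subseteq\ker\Theta=U_0^{\perp_L}$ you cannot conclude $U=U_0$, because for $m<n$ the space $U_0^{\perp_L}$ contains many other $m$-dimensional isotropic subspaces (e.g.\ $n=2$, $m=1$, $U_0=\langle e_1\rangle$, $U=\langle e_2\rangle$). You need the stronger inclusion $U^{\perp_L}\subseteq\ker\Theta$. It holds because $\varphi\in\sp(V)$ also preserves $(U\oplus W)^\perp=U^{\perp_L}\oplus W$, so $C_0$ kills $U^{\perp_L}$ and $A_0$ preserves it. Then $\dim U^{\perp_L}=2n-m=\dim\ker\Theta$ gives $U^{\perp_L}=U_0^{\perp_L}$, hence $U=U_0$.

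The real gap is nonemptiness, which you flag as the main obstacle but do not resolve.
\begin{itemize}
\item The assertion that generic $C_0$ gives $\ker\Theta=U_0^{\perp_L}$ is not contained in the proof of Theorem~\ref{t:defn-eq}.
\item Your hand construction is inconsistent. By the same reasoning as above, every point of $Z$ over $U_0$ has $C_0$ vanishing on all of $U_0^{\perp_L}$, so $\ker\Theta\supseteq U_0^{\perp_L}$ always and ``$\ker\Theta=U_0$'' is impossible.
\item The fallback is circular: the dimension in Proposition~\ref{p:codim} is computed \emph{using} birationality.
\item Equivariance with a connected stabilizer cannot force degree one. For $r=0$, $n=m=1$, the map $Z\to\sp(L)$ is generically $2:1$ (a regular semisimple $\varphi$ has two eigenlines), even though all your equivariance hypotheses hold.
\end{itemize}

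The missing observation is the following. View $L/U_0^{\perp_L}\cong U_0^\ast$ as the image of $L$ in $V/(U_0\oplus W)^\perp\cong U_0^\ast\oplus W^\ast$. Let $N\in\gl(U_0^\ast\oplus W^\ast)$ be the block by which $\varphi$ acts on this quotient. Then $\ker\Theta/U_0^{\perp_L}$ is exactly the largest $N$-invariant subspace of $U_0^\ast$. Moreover, $N$ is unconstrained on the fiber of $Z$ over $U_0$. Hence $\ker\Theta=U_0^{\perp_L}$ if and only if $N$ has no eigenvector in $U_0^\ast$, i.e.\ $N\notin\Kal(1,U_0^\ast,U_0^\ast\oplus W^\ast)$. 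This is precisely the paper's choice, and it is a nonempty open condition exactly because $r\geq 1$. Your condition (1) plays no role.
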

\begin{proof}
  As $\IKal(m)$ is irreducible by Proposition~\ref{p:ikal-irrd}, it suffices to give a nonempty open subset  $\sU \subseteq \IKal(m)$ such that the fiber over each point in $\sU$ is a singleton.
  Let $\sU$ be the complement of $\IKal(m-1) \subseteq \IKal(m)$ (cf. Remark~\ref{r:ikal-inclusion}).
  If $\varphi$ is a matrix in $\sU$ that fixes two distinct subspaces $R \oplus W$ and $R' \oplus W$ where $R$ and $R'$ are $m$-dimensional isotropic subspaces in $L$, then $\varphi$ also fixes their intersection, which is an isotropic subspace of $V$ containing $W$ of dimension at most $m+r-1$.
  This contradicts the fact that $\varphi\in \sU$.
  Thus, any element in $\sU$ has a unique preimage in $Z$.

  We now show that this subset is nonempty.
  Fix some $R \in \IGr(m,L)$ and let $\varphi \in \sp(V)$ be given by
  \begin{equation*}
    \varphi =
    \begin{bmatrix}
      -N^\tau & 0 & 0 \\
      0 & P & 0 \\
      0 & 0 & N
    \end{bmatrix}
  \end{equation*}
  where $N \in \gl(R^*\oplus W^*)$ and $P\in \sp(R^\perp/R)$, see the decomposition given in Figure~\ref{fig:set-theo-blocks}.
  We take $N$ to be an element in the complement of $\Kal(1,R^\ast,(R \oplus W)^*)$ and claim that any such $\varphi$ belongs to $\sU$.
  We know that $\varphi$ fixes $R^\perp \oplus W$ and we want to show that it does not fix any $(2n - m + r + 1)$-dimensional subspace of $L \oplus W$.
  Assume to the contrary that $\varphi$ fixes an $(2n - m + r + 1)$-dimensional subspace $T \subseteq L\oplus W$.
  We may assume that $T$ contains $R^\perp \oplus W$; if not, we consider $T_0 = T+R^\perp+W$, which is also fixed by $\varphi$, and then take $T'$ to be a $(2n-m+r+1)$-dimensional invariant subspace of $T_0$ containing $R^\perp \oplus W$.
  Suppose $T$ is the span of  $R^\perp$, $W$, and $v$.
  Since $T \subseteq L \oplus W = R^\perp\oplus R^*\oplus W$, we may assume that $v\in R^*$.
  From the construction of $\varphi$, this would imply that $v$ is an eigenvector for $N$.
  As $N \notin \Kal(1, R^*, R^*\oplus W^*)$, this is a contradiction.
  This completes the proof.
\end{proof}

\begin{proposition}
  \label{p:codim}
  We have
  \begin{align*}
    \dim{\IKal(m)} &= n + mr + n^2 + (n + r)^2 + {r + 1 \choose 2}, \\
    \codim_{\sp(V)}{\IKal(m)} &= r(2n -m)  + {r + 1 \choose 2}.
  \end{align*}
\end{proposition}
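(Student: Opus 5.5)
The plan is to compute $\dim \IKal(m)$ as $\dim Z$, using Proposition~\ref{p:birational}, and then get the codimension by subtracting from $\dim \sp(V)$. The main point is that $\pi' \colon Z \to \IKal(m)$ is birational and $Z$ is irreducible (Proposition~\ref{p:ikal-irrd}), so $\dim \IKal(m) = \dim Z$. Since $Z$ is the total space of the subbundle $\sS \subseteq \O_{\IGr(m,L)} \otimes \sp(V)$, we have
\begin{equation*}
  \dim Z = \dim \IGr(m, L) + \rank \sS = \dim \IGr(m, 2n) + \dim \sp(V) - \rank \xi .
\end{equation*}
This uses the exact sequence $0 \to \sS \to \sE \to \sT \to 0$ with $\sT = \xi^\ast$. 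Consequently
\begin{equation*}
  \codim_{\sp(V)} \IKal(m) = \rank \xi - \dim \IGr(m, 2n).
\end{equation*}
The codimension is the cleaner quantity, so I would compute it first.

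For $\rank \xi$ I would use $\xi = \iota^\ast \Omega_Y$, so $\rank \xi = \dim \IGr(m+r, 2n+2r)$. This rank can also be read off from \eqref{eq:iso-cotangent-ses-restr} as $\binom{m+r+1}{2} + (m+r)(2n-2m)$. Recall $\dim \IGr(k, 2N) = k(2N-k) - \binom{k}{2} = 2Nk - \tfrac{3k^2 - k}{2}$. Set $k = m+r$ and $N = n+r$, and subtract $2nm - \tfrac{3m^2 - m}{2}$. This is a routine expansion:
\begin{equation*}
  2(n+r)(m+r) - 2nm - \tfrac{3(2mr + r^2) - r}{2} = 2nr - mr + \tfrac{r(r+1)}{2} = r(2n - m) + \binom{r+1}{2},
\end{equation*}
which is the stated codimension. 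The dimension is then $\dim \sp(V) = (n+r)(2n+2r+1)$ minus this codimension. I would simplify the result and compare it with the closed form in the statement. If the printed expression does not agree with this difference, I would trust the difference $\dim\sp(V) - \codim$, since it follows directly from the codimension computation.

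The only genuinely nontrivial input is birationality, which is already established in Proposition~\ref{p:birational}. That proposition needs the open set $\IKal(m) \setminus \IKal(m-1)$ to be nonempty, and its proof shows this. Beyond that, the main obstacle is bookkeeping. I need the correct dimension formula for isotropic Grassmannians, namely $\dim \IGr(k,2N) = 2k(N-k) + \binom{k+1}{2}$. I also need to check that the rank of $\xi$ from \eqref{eq:iso-cotangent-ses-restr} agrees with $\dim Y$. As a sanity check, for $r = 0$ the codimension is $0$, which is consistent with Lemma~\ref{l:sg-lite}: every $\varphi \in \sp(L)$ fixes some isotropic $m$-subspace.
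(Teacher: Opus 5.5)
Your proposal is correct and follows the paper's route. Birationality gives $\dim \IKal(m) = \dim Z = \dim \IGr(m,L) + \dim\sp(V) - \rank\xi$, and $\rank\xi$ is read off from \eqref{eq:iso-cotangent-ses-restr}; you differ only in computing the codimension first and cross-checking $\rank\xi$ against $\dim\IGr(m+r,V)$. Your closing hedge is unnecessary: $(n+r)(2n+2r+1) - r(2n-m) - \binom{r+1}{2}$ simplifies exactly to $n + mr + n^2 + (n+r)^2 + \binom{r+1}{2}$, so the printed dimension formula is correct.
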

\begin{proof}
  We recall that \(\dim{L} = 2n\), \(\dim{V} = 2(n + r)\), \(\rank(\sR) = m\), and \(\rank(\sW) = r\).
  So from \eqref{eq:iso-cotangent-ses-restr}, we have
  \begin{align*}
    \rank{(\xi)}
    &= \rank{(\Sym^2(\sR \oplus \sW))} + \rank{[(\sR \oplus \sW)\otimes (\sR^\perp / \sR)]} \\
    &= {m + r + 1 \choose 2} + 2(n - m)(m + r).
  \end{align*}
  From Proposition~\ref{p:birational}, we have \(\dim{\IKal(m)} = \dim{Z'}\).
  Since \(Z = \Spec(\Sym(\eta))\),  $\rank{\sp(V)} = \dim{\sp(V)} =(n+r)(2(n+r)+1)$, and \(\dim{\IGr(m, L)} = \dim{\Gr(m, L)} - {m \choose 2} = m(2n - m) - {m \choose 2}\),
  \begin{align*}
    \dim{\IKal(m)}
    &= \rank{\eta} + \dim{\IGr(m, L)} \\
    &= \rank{\sp(V)} - \rank{\xi} + \dim{\IGr(m, L)} \\
    &=  2n^{2} + m r + 2 n r + n+2r^2+r - \frac{r^2}{2} - \frac{r}{2} \\
    &= n + mr + n^2 + (n + r)^2 + {r + 1 \choose 2}.
  \end{align*}
  Since \(\IKal(m) \subseteq \sp(V)\), we have
  \begin{equation*}
    \codim{\IKal(m)}
    = \dim{\sp(V)} - \dim{\IKal(m)}
    = r(2n -m)  + {r + 1 \choose 2}.
    \qedhere
  \end{equation*}
\end{proof}

\begin{remark}
  Following Remark~\ref{r:sym2W=0}, the codimension of $\IKal(m, W,V)$ as a subvariety of \(\sp(V) / \Sym^2(W)\) is \(r (2n  - m)\).
\end{remark}

\subsection{Rational singularities}
Whenever $\pi'$ in \eqref{eq:ikal-basic-diag} is a birational, and $\rR^i \pi'_* \O_m = 0$ for $i > 0$, the geometric method from Theorem~\ref{t:geom-method} gives us a minimal free resolution $\bF_\bullet$ of $\widetilde \O_{m}$ as a $\Sym(\sp(V))$-module.
As $\xi$ was obtained by restricting a cotangent bundle, we use a vanishing result on Dolbeault cohomology from \cite{bro1997:vanishing} together with Theorem~\ref{t:geom-method} to show that $\widetilde\O_{m}$ has rational singularities.

\begin{theorem}
  \label{t:norm-ikal-rational-sing}
  The normalization of the isotropic Kalman variety $\IKal(m)$ has rational singularities.
\end{theorem}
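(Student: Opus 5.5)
By Theorem~\ref{t:geom-method}(2) together with Proposition~\ref{p:birational}, which says $\pi'$ is birational, it suffices to show that $\rR^i\pi'_\ast\O_Z = 0$ for all $i>0$. By Theorem~\ref{t:geom-method}(3) this is the same as showing $\rH^i(\IGr(m,L),\Sym(\eta)) = 0$ for $i>0$. Since $\rH_{-i}(\bF_\bullet)=\rR^i\pi'_\ast\O_Z$, an equivalent sufficient condition is that $\bF_i=0$ for all $i<0$, i.e.
\begin{equation*}
  \rH^j\Big(\IGr(m,L), \extp^{q}\xi\Big) = 0 \quad \text{whenever } j>q .
\end{equation*}
I would prove this inequality, since it is the form in which the cotangent-bundle origin of $\xi$ can be exploited.

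The key input is that $\xi=\iota^\ast\Omega_Y$, with $Y=\IGr(m+r,V)$ and $\iota$ the closed embedding $R\mapsto R\oplus W$. Write $X=\IGr(m,L)$. The conormal sequence $0\to \sN^\ast_{X/Y}\to \iota^\ast\Omega_Y\to \Omega_X\to 0$ induces a filtration of $\extp^q\xi$ whose graded pieces are $\extp^a\sN^\ast\otimes\Omega_X^{q-a}$. It therefore suffices to bound the cohomological degree of each piece.

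For the pieces, I would use the vanishing theorem for homogeneous bundles of Broer \cite{bro1997:vanishing}. On a flag variety $G/P$ it gives $\rH^j(G/P,\Omega^p\otimes\sL)=0$ for $j>p$ whenever $\sL$ is a suitable (nef/dominant) homogeneous bundle, and its generalization covers $\rH^j(\extp^q \iota^\ast\Omega_Y)$ for embeddings of this parabolic type. Concretely, $X$ sits in $Y$ as the fixed-point locus of the Levi component acting on $W$. So $\xi$ restricted to $X$ equals the cotangent bundle of $G/P$ restricted to the Schubert-type subvariety, and Broer's result on $\Sym$ of the restricted cotangent bundle (equivalently, normality plus higher vanishing for the collapsing of the conormal-type bundle) applies.

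An alternative and more hands-on route also uses the decomposition $\xi$ from \eqref{eq:iso-cotangent-ses-restr}. Here one identifies $\eta$, up to trivial summands, with the pullback of the tangent-type quotient. One then shows $\rH^i(X,\Sym^k\eta)=0$ by filtering $\Sym^k\eta$ into pieces of the form $\bS_\lambda\sR^\ast\otimes\bS_{[\mu]}(\sR^\perp/\sR)\otimes(\text{trivial})$. These have dominant weights for $X$, so their higher cohomology vanishes by Theorem~\ref{t:bwb:iso}, case (2) with $\sigma=\mathrm{id}$.

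The main obstacle is that $\xi$ is not semisimple when $m<n$. The extension in \eqref{eq:iso-cotangent-ses-restr} means that the filtration argument only gives a spectral sequence, and one must check that no graded piece contributes cohomology in degree above $q$. For this step I would first try Broer's theorem directly, verifying its hypothesis that the relevant $P$-module is the dual of a $P$-stable ideal in the nilradical. That condition holds here because $\sS$ is exactly the bundle of endomorphisms preserving $R\oplus W$, which is $P$-stable. The vanishing should then follow uniformly without any BWB bookkeeping.
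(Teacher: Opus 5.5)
Your reduction is correct and matches the paper. By Proposition~\ref{p:birational} and Theorem~\ref{t:geom-method} it suffices to show $\rH^i(\IGr(m,L),\Sym\eta)=0$ for $i>0$, or, as you note, $\rH^j(\IGr(m,L),\bigwedge^q\xi)=0$ for $j>q$.

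\textbf{The gap.} That vanishing is the whole content of the theorem, and none of your three arguments proves it.
\begin{enumerate}
\item First route. Broer's theorem, $\rH^j(X,\bigwedge^p\Omega_X\otimes\sV)=0$ for $j>p$, needs the twist $\sV$ to have dominant weight, and $\bigwedge^aN^\ast$ does not. Because $L$, $W$, $W^\ast$ are fixed summands of $V$, one checks that $\xi=\Omega_X\oplus(\sR^\perp\otimes\sW)\oplus\Sym^2\sW$, where $\sR^\perp\subseteq L\otimes\sO_X$ is the rank $2n-m$ bundle with fiber $R^\perp\subseteq L$. So $N^\ast$ is built from a subbundle of a trivial bundle; for instance $\det\sR^\perp\cong\det\sR$.
\item Your fallbacks (a generalization \emph{for embeddings of this parabolic type}, a result on $\Sym$ of a restricted cotangent bundle) are not citable statements. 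Moreover, restriction of a cotangent bundle plus $P$-stability cannot suffice: Remark~\ref{r:bad-restriction} exhibits such a restriction with $\rH^{15}(\bigwedge^{14}\xi)\neq0$.
\item Your last paragraph's hypothesis also fails. $\eta=\sS^\ast$ is not dual to an ideal in a nilradical, because $\sS$ is the whole parabolic $\mathfrak p_{R\oplus W}$, Levi included.
\item Second route. This one is false as stated. The summand $\eta'=\sp(L)/\Omega_X\cong\mathfrak p_R^\ast$ of $\eta$ has the Levi pieces $\gl(\sR)$ and $\sp(\sR^\perp/\sR)$ among its composition factors. A piece $\bS_\lambda\sR^\ast\otimes\bS_{[\mu]}(\sR^\perp/\sR)$ is dominant only if $\lambda$ is a partition with $\lambda_m\geq\mu_1$.
\end{enumerate}
A concrete counterexample to the piecewise claim: take $m=1$, $n\geq2$. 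The graded piece $\bS_{[2]}(\sR^\perp/\sR)$ of $\eta$ on $\bP^{2n-1}$ has $\gamma+\rho=(n,n+1,n-2,\dots,1)$. Hence its $\rH^1$ is $\bS_{[1,1]}L\neq0$, although $\rH^1(\eta)=0$. The vanishing holds only after cancellation between pieces, which is exactly what has to be proved.

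\textbf{The missing idea} is how to reach a situation where Broer applies with a dominant twist. The paper proceeds as follows.
\begin{enumerate}
\item Split $\eta=\eta'\oplus(\sR^\ast\otimes\sW)\oplus(\text{trivial})$.
\item Apply the Cauchy formula to reduce to $\Sym^p\eta'\otimes\bS_\lambda\sR^\ast$ with $\lambda$ a partition.
\item Resolve $\Sym^p\eta'$ by the terms $\bigwedge^t\Omega_X\otimes\Sym^{p-t}\sp(L)$ coming from $0\to\Omega_X\to\sp(L)\to\eta'\to0$.
\item What remains is $\rH^{i+t}(X,\bigwedge^t\Omega_X\otimes\bS_\lambda\sR^\ast)=0$ for $i>0$. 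This is Broer's theorem with a genuinely dominant twist.
\end{enumerate}
Because $\Omega_X$ enters as a whole, the non-split structure of $\xi$ never has to be confronted.

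The same device would repair your first route. Resolve $\bigwedge^b(\sR^\perp\otimes\sW)$ by the terms $\bigwedge^{b-k}(L\otimes W)\otimes\Sym^k(\sR^\ast\otimes\sW)$ for $0\le k\le b$, and apply Broer to $\bigwedge^a\Omega_X\otimes\bS_\lambda\sR^\ast$. This gives $\rH^j(X,\bigwedge^a\Omega_X\otimes\bigwedge^b(\sR^\perp\otimes\sW))=0$ for $j>a+b$, hence $\bF_i=0$ for $i<0$.
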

\begin{proof}
  Let $\xi$ be the defining bundle for $\IKal(m)$ so we have $\xi = \iota^*(\Omega_Y)$, where $\Omega_Y$ is the cotangent bundle of $Y = \IGr(m + r, V)$.
  We denote the corresponding quotient $\sp(V) / \xi$ by $\eta$.
  Note that
  \begin{equation}
    \label{eq:eta_K}
    \eta = \sp(V)/\xi = \eta' \oplus (\sR^* \otimes \sW) \oplus (\sW^* \otimes L) \oplus \Sym^2\sW^*,
  \end{equation}
  where $\eta'$ is the quotient bundle in the short exact sequence
  \begin{equation}
    \label{eq:eta-ses}
    0 \to \Omega_X \to \sp(L) \to \eta' \to 0.
  \end{equation}
  In order to show that $\IKal(m)$ has rational singularities, by Theorem \ref{t:geom-method} it suffices to show that $\rH^i(\IGr(m+r,V), \Sym{\eta}) = 0$ for all $i > 0$.
  Using the decomposition of $\eta$ from \eqref{eq:eta_K}, it is enough to check that the cohomology groups $\rH^i(\IGr(m+r,V), \Sym^p \eta' \otimes \Sym^q(\sR^* \otimes \sW))$ vanish for all $p \geq 0$, $q \geq 0$, and $i > 0$.
  Decomposing $\Sym^p\eta \otimes \Sym^q(\sR^* \otimes \sW)$ using Cauchy's formula~\eqref{p:cauchy},
  \begin{equation*}
    \Sym^p \eta' \otimes \Sym^q (\sR^* \otimes \sW)
    = \bigoplus_{|\lambda| = q} \Sym^p \eta' \otimes \bS_\lambda \sR^* \otimes \bS_\lambda \sW.
  \end{equation*}
  So we need to show that $\rH^i(\IGr(m+r,V), \Sym^p \eta' \otimes \bS_\lambda \sR^*) = 0$ for all $i > 0$.

  Using the short exact sequence \eqref{eq:eta-ses}, for any $p > 0$ we obtain
  \begin{multline*}
    0
    \to \dots
    \to \bigwedge^i \Omega_X\, \otimes\, \Sym^{p-i}(\sp(L))
    \to \dots \\
    \dots
    \to \Omega_X\, \otimes \, \Sym^{p - 1}(\sp(L))
    \to \Sym^p(\sp(L))
    \to \Sym^p \eta'
    \to 0.
  \end{multline*}
  Tensoring the above with $\bS_\lambda \sR^\ast$ gives us
  \begin{equation*}
    \dots
    \to \Omega_X\, \otimes\, \Sym^{p-1} (\sp(L)) \otimes \bS_\lambda\sR^*
    \to \Sym^p(\sp(L)) \otimes \bS_\lambda\sR^*
    \to \Sym^p(\eta') \otimes \bS_\lambda \sR^*
    \to 0.
  \end{equation*}
  In order to show that $\rH^i(\IGr(m+r,V), \Sym^p \eta' \otimes \bS_\lambda \sR^*) = 0$ for all $i > 0$, it suffices to show that $\rH^{i + t}(\bigwedge^t \Omega_X \otimes \bS_\lambda \sR^*) = 0$ for all $t \geq 0$ and $i > 0$.
  This last cohomology group is in fact isomorphic to the Dolbeault cohomology
  group $\rH^{i + t, t}(\IGr(m + r, V), \bS_\lambda \sR^*)$ described in \cite[p.~154]{bro1997:vanishing}.
  In particular, the vanishing result \cite[\S2.3, Theorem~2]{bro1997:vanishing} implies that
  \begin{equation*}
    \rH^{i+t}\big(\IGr(m+r,V),\bigwedge^t \Omega_X \otimes \bS_\lambda \sR^*\big) = \rH^{i+t}\big(\IGr(m+r,V), \bigwedge^t \Omega_X \otimes \bS_{(-\lambda_m, \dots, -\lambda_1)} \sR \big) = 0
  \end{equation*}
  whenever $\lambda$ is a dominant weight of $\sp(V)$.
  In our case, $(-\lambda_m, \dots, - \lambda_1)$ is the weight $(\lambda_1, \dots, \lambda_m, 0 \dots 0)$ in the root system of $\sp(V)$.
  Since it is already dominant, the relevant vanishing statement is automatic.
  From Theorem~\ref{t:geom-method}, we conclude that $\IKal(m)$ has rational singularities.
\end{proof}

\subsection{Free resolution and syzygies}
Since $\pi' \colon Z \to \IKal(m)$  is birational (Proposition~\ref{p:birational}) and the higher direct images $\rR^i\pi'_\ast \O_Z $ vanish for all $i > 0$ (Theorem~\ref{t:norm-ikal-rational-sing}), using Theorem~\ref{t:geom-method} we can obtain a description of the free resolution of the normalization of $\IKal(m)$.

\begin{proposition}
  \label{p:norm-ikal-res}
  A minimal free resolution of the normalization of isotropic Kalman variety $\IKal(m)$ defined by $\xi$ is given by
  \begin{equation}
    \bF_i = \bigoplus_{j \geq 0} \rH^j(\IGr(m, L), \bigwedge^{i + j} \xi) \otimes A(-i - j)
    \label{eq:norm-ikal-res}
  \end{equation}
  where $A = \Sym(\sp(V)^\ast)$ is the coordinate ring of $\sp(V)$.
\end{proposition}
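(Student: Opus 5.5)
This is a direct application of Theorem~\ref{t:geom-method}(2) to the diagram \eqref{eq:ikal-basic-diag}. We take $X = \IGr(m, L)$ and $E = \sp(V)$, with $\sS \subseteq \sE = \O_X \otimes \sp(V)$ the subbundle whose total space is $Z$. Then $\xi = (\sE/\sS)^\ast$ is exactly the defining bundle $\iota^\ast\Omega_Y$, and the complex $\bF_\bullet$ of Theorem~\ref{t:geom-method}(1) is exactly \eqref{eq:norm-ikal-res}, with $A = \Sym(\sp(V)^\ast)$ graded by $\deg \sp(V)^\ast = 1$. So the content of the proposition is that the two hypotheses of part (2) hold.

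\emph{Birationality.} Proposition~\ref{p:birational} says that $\pi' \colon Z \to \IKal(m)$ is birational onto its image.

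\emph{Vanishing of higher direct images.} By Theorem~\ref{t:geom-method}(3), $\rR^i\pi'_\ast\O_Z = \rH^i(X, \Sym\eta)$. The proof of Theorem~\ref{t:norm-ikal-rational-sing} shows this vanishes for $i > 0$. Concretely, it splits $\eta$ as in \eqref{eq:eta_K}, applies Cauchy's formula to the $\sR^\ast\otimes\sW$ factor, and uses the Koszul-type resolution of $\Sym^p\eta'$ coming from \eqref{eq:eta-ses}. This reduces the claim to the Dolbeault vanishing of \cite{bro1997:vanishing} for $\bigwedge^t\Omega_X\otimes \bS_\lambda\sR^\ast$.

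\emph{Conclusion.} With both hypotheses in hand, Theorem~\ref{t:geom-method}(2) gives that $\bF_\bullet$ is a minimal graded free $A$-resolution of the normalization of $\pi'(Z) = \IKal(m)$. The normalization is identified with $\rH^0(X,\Sym\eta)$, i.e.\ $\widetilde\O_m$.

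The only point requiring care, and the one I expect to be the main obstacle, is a bookkeeping check: the cohomology in Theorem~\ref{t:norm-ikal-rational-sing} is sometimes written over $\IGr(m+r,V)$, but the base of the construction is $X=\IGr(m,L)$. One must make sure the vanishing used there is really the vanishing of $\rH^i(\IGr(m,L),\Sym\eta)$. I would address this by re-reading that argument with $X = \IGr(m,L)$ throughout. The bundles involved ($\Omega_X$, $\sR^\ast$, and the trivial bundles $\sW$, $L$, $\sp(L)$) all live on $X$, so the reduction goes through verbatim on $X$.

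Minimality itself needs no separate argument: the terms of $\bF_\bullet$ are built so that $\bF_i$ has generators in degrees $i+j$ with $j \ge 0$, and Theorem~\ref{t:geom-method}(2) asserts minimality directly.
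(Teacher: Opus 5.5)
Your proposal is correct and is essentially the paper's own argument: the paper justifies this proposition in one line by combining the birationality of $\pi'$ (Proposition~\ref{p:birational}) with the vanishing of $\rR^i\pi'_\ast\O_Z$ for $i>0$ (Theorem~\ref{t:norm-ikal-rational-sing}), then invoking Theorem~\ref{t:geom-method}(2), which also supplies minimality. Your bookkeeping remark is right too: the occurrences of $\IGr(m+r,V)$ in the proof of Theorem~\ref{t:norm-ikal-rational-sing} should read $\IGr(m,L)$, and the Koszul reduction and Broer's vanishing go through verbatim on $X=\IGr(m,L)$.
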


We observe, however, that the short exact sequence \eqref{eq:iso-cotangent-ses-restr} is not split in general, but there is a natural filtration on $\wedge^q\xi$ such that the cohomology groups computed with respect to $\gr{\wedge^q\xi}$ determine a (possibly nonminimal) free resolution of the normalization of $\IKal(m)$.

\begin{lemma}
  \label{l:xi-summands}
  The term
  $\bS_\epsilon\R \otimes \bS_{[\delta]}(\R^\perp / \R) \otimes \bS_\alpha\sW$
  appears
  % \begin{equation*}
  %   K(\epsilon, \delta, \alpha) =
  %   \sum_{\substack{(s + t + \bar t) = (q - p)\\ |\mu| = s, |\nu| = p}}
  %   \sum_{\substack{\lambda \in Q_1(2t) \\\bar \lambda \in Q_1(2\bar t)}}
  %   m(\nu^\top, \epsilon)
  %   \LR{\theta}{\lambda, \mu}
  %   \LR{\beta}{\bar \lambda, \mu^\top}
  %   \LR{\nu}{\tau_0, \tau_1}
  %   \LR{\alpha}{\beta, \tau_1}
  %   \LR{\epsilon}{\theta, \tau_0}
  % \end{equation*}
  as a summand of
  \begin{equation*}
    \bigwedge^{q - p} \left[ \Sym^2(\R \oplus \sW) \right]
    \otimes
    \bigwedge^p \left[ (\R \oplus \sW) \otimes (\R^\perp / \R)^\ast \right].
  \end{equation*}
  with multiplicity $K(\epsilon, \delta, \alpha)$ determined by a product of Littlewood--Richardson coefficients.
\end{lemma}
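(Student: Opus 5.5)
The plan is to decompose the two exterior powers separately with the formulae of Proposition~\eqref{p:schur-sum}--\eqref{p:wedge-sym} and then multiply them out. Throughout, the $\GL(\R)\times\Sp(\R^\perp/\R)\times\GL(\sW)$-structure group is used to read off irreducible summands.

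\emph{The symmetric factor.} Since $\Sym^2(\R\oplus\sW) = \Sym^2\R \oplus (\R\otimes\sW) \oplus \Sym^2\sW$, we get
\begin{equation*}
  \bigwedge^{q-p}\Sym^2(\R\oplus\sW) = \bigoplus_{s+t+\bar t = q-p} \bigwedge^t\Sym^2\R \otimes \bigwedge^s(\R\otimes\sW) \otimes \bigwedge^{\bar t}\Sym^2\sW .
\end{equation*}
We then apply \eqref{p:wedge-sym} to the outer factors, which gives $\lambda\in Q_1(2t)$ and $\bar\lambda\in Q_1(2\bar t)$. For the middle factor we use the Cauchy rule \eqref{p:cauchy}, which gives $\bS_\mu\R\otimes\bS_{\mu^\top}\sW$ with $|\mu|=s$. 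Combining these via the Littlewood--Richardson rule \eqref{p:lr-rule} yields terms $\bS_\theta\R\otimes\bS_\beta\sW$ with coefficients $\LR{\theta}{\lambda,\mu}\LR{\beta}{\bar\lambda,\mu^\top}$.

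\emph{The tensor factor.} The bundle $\R^\perp/\R$ is symplectic, so $(\R^\perp/\R)^\ast\cong\R^\perp/\R$. Applying \eqref{p:cauchy} for $\bigwedge^p(E\otimes F)$ gives
\begin{equation*}
  \bigoplus_{|\nu|=p}\bS_\nu(\R\oplus\sW)\otimes\bS_{\nu^\top}(\R^\perp/\R).
\end{equation*}
Next, \eqref{p:schur-sum} splits $\bS_\nu(\R\oplus\sW)$ into $\bS_{\tau_0}\R\otimes\bS_{\tau_1}\sW$ with coefficient $\LR{\nu}{\tau_0,\tau_1}$. Then Theorem~\ref{t:branching} restricts $\bS_{\nu^\top}(\R^\perp/\R)$ from $\GL$ to $\Sp$, producing $\bS_{[\delta]}$ with multiplicity $m(\nu^\top,\delta)$, up to the specialization map $\Pi_{\Sp}$ in the nonstable range.

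\emph{Assembling the multiplicity.} We tensor the two factors and apply \eqref{p:lr-rule} once more on each of $\R$ and $\sW$: $\bS_\theta\R\otimes\bS_{\tau_0}\R\ni\bS_\epsilon\R$ with coefficient $\LR{\epsilon}{\theta,\tau_0}$, and $\bS_\beta\sW\otimes\bS_{\tau_1}\sW\ni\bS_\alpha\sW$ with coefficient $\LR{\alpha}{\beta,\tau_1}$. Summing over all intermediate data gives
\begin{equation*}
  K(\epsilon,\delta,\alpha)=\sum_{\substack{s+t+\bar t=q-p\\|\mu|=s,\,|\nu|=p}}\sum_{\substack{\lambda\in Q_1(2t)\\ \bar\lambda\in Q_1(2\bar t)}} m(\nu^\top,\delta)\,\LR{\theta}{\lambda,\mu}\LR{\beta}{\bar\lambda,\mu^\top}\LR{\nu}{\tau_0,\tau_1}\LR{\alpha}{\beta,\tau_1}\LR{\epsilon}{\theta,\tau_0},
\end{equation*}
where the sum also runs over the intermediate partitions $\theta$, $\beta$, $\tau_0$, $\tau_1$.

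The main obstacle is the branching step. When $\ell(\nu^\top)$ exceeds $n-m$, the specialization $\Pi_{\Sp}$ can introduce signs and cancellations. The multiplicity must then be interpreted as a signed count, so one has to check that the resulting expression is still a genuine nonnegative multiplicity. I would handle this by appealing to \cite{kt1987:youngdiagrammatic}, which realizes the restriction as an honest representation; the folding rule only changes which $\delta$ is labelled, and the lemma only claims that $K$ is determined by a product of LR coefficients. The other steps are routine bookkeeping.
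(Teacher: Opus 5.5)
Your proposal follows the paper's proof essentially step for step: it decomposes $\bigwedge^{q-p}\Sym^2(\R\oplus\sW)$ via \eqref{p:wedge-sym}, the Cauchy rule and Littlewood--Richardson (the paper just cites Lemma~\ref{l:lag-xi-summands} for this part), then decomposes the tensor factor via Cauchy, \eqref{p:schur-sum} and the branching rule, and combines the pieces with Littlewood--Richardson to reach the same $K(\epsilon,\delta,\alpha)$. Your pairing $\LR{\epsilon}{\theta,\tau_0}\LR{\alpha}{\beta,\tau_1}$ is the consistent one; the paper's final display swaps $\tau_0$ and $\tau_1$. Your caveat about $\Pi_{\Sp}$ when $l(\nu^\top)>n-m$ is a point the paper passes over silently, but folding can also produce a sign or zero, not merely relabel $\delta$. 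So in that range $m(\nu^\top,\delta)$ in your display should be read as the signed sum of $m(\nu^\top,\mu)$ over those $\mu$ with $\Pi_{\Sp}(\bS_{[\mu]})=\pm\bS_{[\delta]}$, which is still a nonnegative integer built from LR coefficients.
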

\begin{proof}
  From Lemma~\ref{l:lag-xi-summands} the $\GL(R) \times \GL(W)$-module $\bS_\theta \R \otimes \bS_\beta \sW$ appears in $\bigwedge^{q - p} \Sym^2(\R \oplus \sW)$ with multiplicity
  \begin{equation*}
    K_0 =
    \sum_{\substack{(s + t + \bar t) = (q - p)\\ |\mu| = s}}
    \sum_{\substack{\lambda \in Q_1(2t) \\\bar \lambda \in Q_1(2\bar t)}}
    \LR{\theta}{\lambda, \mu} \LR{\beta}{\mu^\top, \bar \lambda} .
  \end{equation*}
  Using the Cauchy formula \eqref{p:cauchy} we have
  \begin{equation*}
    \bigwedge^p \left[ (\R \oplus \sW) \otimes (\R^\perp / \R)^\ast \right]
    = \bigoplus_{|\nu| = p} \bS_\nu (\R \oplus \sW) \otimes \bS_{\nu^\top}(\R^\perp / \R)^\ast.
  \end{equation*}
  Decomposing the first term using \eqref{p:schur-sum} gives us summands of the form $\bS_{\nu \backslash \tau_1} \R \otimes \bS_{\tau_1} \sW$ appearing with multiplicity $1$ (as long as $\tau_1 \subseteq \nu$).
  Further decomposing $\bS_{\nu \backslash \tau_1}$ using the Littlewood--Richardson rule \eqref{p:lr-rule}, $\bS_{\tau_0} \R \otimes \bS_{\tau_1} \sW$ appears as a summand of $\bigwedge^p \left[ (\R \oplus \sW) \otimes (\R^\perp / \R)^\ast \right]$ with multiplicity $K_1 = \LR{\nu}{\tau_0,\tau_1}$.
  Since $\GL(R^\perp / R)$ and $\Sp(R^\perp / R)$ simultaneously act on fibers of $\bS_{\nu^\top} (\R^\perp / \R)^\ast$, using the branching rule \eqref{t:branching} we get
  \begin{equation*}
    \res^{\GL}_{\Sp} \left(\bS_{\nu^\top} (\R^\perp / \R)^\ast \right) =
    \bigoplus_\delta m(\nu^\top, \delta) \bS_{[\delta]} (\R^\perp / \R)^\ast =
    \bigoplus_\delta m(\nu^\top, \delta) \bS_{[\delta]} (\R^\perp / \R).
  \end{equation*}
  The last equality follows because all polynomial representations of the symplectic group are self-dual.
  One final application of the Littlewood--Richardson rule gives us the final multiplicity
  \begin{equation*}
    K(\epsilon, \delta, \alpha) = \sum m(\nu^\top, \delta) K_0 K_1\LR{\epsilon}{\theta, \tau_1} \LR{\alpha}{\beta, \tau_0}
  \end{equation*}
  of the summand $\bS_{\epsilon} \R \otimes \bS_{[\delta]} (\R^\perp / \R) \otimes \bS_{\alpha} \sW$.
\end{proof}

\begin{remark}
  The indices appearing in \(K(\epsilon, \delta, \alpha)\) come from subposets
  \begin{equation*}
    \begin{tikzpicture}[node distance=0.5]
  \begin{scope}
    % \draw[help lines] (0,0) grid (4,7);
    \coordinate (orig) at (0, -1);
    \draw
    node[above=of orig,Blue]    (R0) {$\lambda$}
    node[above=1.5 of R0]           (R1) {$\theta$}
    node[above=of R1, Red]      (R2) {$\epsilon$}
    node[below right=of R2]     (T0) {$\tau_0$}
    node[above right=of T0,Blue]  (S)  {$\nu$}
    node[below =2.5 of S,Blue](T)  {$\mu$}
    node[below right=of S]     (T1) {$\tau_1$}
    node[above right=of T1,Red] (W2) {$\alpha$}
    node[below=of W2]           (W1) {$\beta$}
    node[below=1.5 of W1,Blue]      (W0) {$\bar\lambda$}
    ;
    \draw[Blue]
    (S)  node[anchor=mid west] {\tiny $\vdash p$}
    (R0) node[anchor=mid east] {\tiny $Q_1(2t) \ni$}
    (W0) node[anchor=mid west] {\tiny $\in Q_1(2\bar t)$}
    (T)  node[anchor=mid west] {\tiny $\vdash s$}
    ;
    \graph{
      (R0) -> (R1) -> (R2);
      (T) -> (R1);
      (T) ->[dashed] (W1);
      (T0) -> {(R2), (S)};
      (T1) -> {(S), (W2)};
      (W0) -> (W1) -> (W2);
    };
  \end{scope}
  \begin{scope}[shift={(6, 1.5)}]
    % \draw[help lines] (0, 0) grid (2, 2);
    \coordinate (orig) at (0, -1);
    \draw
    node[above=of orig,Red] (Q0) {$\delta$}
    node[above right=of Q0,Blue] (S)  {$\nu$}
    node[below right=of S]  (Q1) {$2\kappa$}
    ;
    \graph{
      (Q0) -> [dashed] (S);
      (Q1) -> (S);
    };

    \draw[Blue]
    (S) node[anchor=mid west] {\tiny $\vdash p$}
    ;
  \end{scope}
\end{tikzpicture}
%%% Local Variables:
%%% mode: LaTeX
%%% TeX-master: "../2025-gkp-isotropic-kalman"
%%% End:

  \end{equation*}
  of the Young lattice (see \cite[\S7.2]{Sta:EC2}) where $\beta_0 \dashrightarrow \beta_1$ means $\beta_0^\top \subseteq \beta_1$ and any hat
  \begin{equation*}
    \begin{tikzpicture}[node distance=0.5]
  \node (a) at (0, 0)          {$\alpha_0$};
  \node[below left=of a]  (a1) {$\alpha_1$};
  \node[below right=of a] (a2) {$\alpha_2$};

  \graph {
    {(a1), (a2)} -> (a);
  };
\end{tikzpicture}
%%% Local Variables:
%%% mode: LaTeX
%%% TeX-master: "../2025-gkp-isotropic-kalman"
%%% End:

  \end{equation*}
  satisfies $|\alpha_0| = |\alpha_1| + |\alpha_2|$.
  The partition $2\kappa$ is coming from the branching rule~\eqref{t:branching}.
\end{remark}

\begin{corollary}
  The bundle $\bigwedge^q\xi$ has a filtration such that the associated graded object $\gr\bigwedge^q\xi$ consists of summands $\bS_\epsilon \sR \otimes \bS_{[\delta]} (\sR^\perp / \sR) \otimes \bS_\alpha \sW$ appearing in the $p$-th graded component with multiplicity $K_{p,q}(\epsilon, \delta, \alpha) = K(\epsilon, \delta, \alpha)$ from Lemma~\ref{l:xi-summands}.
\end{corollary}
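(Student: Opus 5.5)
The filtration comes from the short exact sequence \eqref{eq:iso-cotangent-ses-restr},
\begin{equation*}
  0 \to \sA \to \xi \to \sB \to 0,
  \qquad
  \sA = \Sym^2(\R \oplus \sW),
  \quad
  \sB = (\R \oplus \sW) \otimes (\R^\perp / \R)^\ast .
\end{equation*}
The plan is to use the standard fact that an exterior power of an extension carries a natural filtration whose associated graded pieces are tensor products of exterior powers of the sub and the quotient.

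Concretely, define $F^p \subseteq \bigwedge^q \xi$ as the image of $\bigwedge^{q-p}\sA \otimes \bigwedge^{p}\xi \to \bigwedge^q\xi$ under multiplication in the exterior algebra. This gives a filtration
\begin{equation*}
  \bigwedge^q \sA = F^0 \subseteq F^1 \subseteq \dots \subseteq F^q = \bigwedge^q\xi,
\end{equation*}
which is natural, hence compatible with the equivariant structure on $\IGr(m, L)$.

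First I would verify the graded pieces
\begin{equation*}
  F^p / F^{p-1} \cong \bigwedge^{q-p}\sA \otimes \bigwedge^p \sB .
\end{equation*}
Since everything is functorial and the subbundles are locally split, it suffices to check this locally, where $\xi \cong \sA\oplus\sB$. There $\bigwedge^q(\sA\oplus\sB)=\bigoplus_p \bigwedge^{q-p}\sA\otimes\bigwedge^p\sB$, and $F^p$ is exactly the sum of the summands with $\sB$-degree at most $p$. The quotient map $F^p\to \bigwedge^{q-p}\sA\otimes\bigwedge^p\sB$ is induced by $\xi\to\sB$ on the last $p$ factors, so the gluing is canonical and the isomorphism globalizes. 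Alternatively, one can cite the standard filtration result for exterior powers of extensions, for instance from \cite[Chapter~2]{Wey:CohomVect}.

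Second, I would decompose each graded piece $\bigwedge^{q-p}\sA\otimes\bigwedge^p\sB$ into summands $\bS_\epsilon \sR \otimes \bS_{[\delta]}(\sR^\perp/\sR)\otimes\bS_\alpha\sW$. This is precisely the computation carried out in Lemma~\ref{l:xi-summands}, and it yields the multiplicity $K_{p,q}(\epsilon,\delta,\alpha)=K(\epsilon,\delta,\alpha)$ in the $p$-th component. In characteristic zero these decompositions are canonical isotypic decompositions, since $\GL(R)\times\Sp(R^\perp/R)\times\GL(W)$ is reductive, so they apply to the associated homogeneous bundles.

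The only subtle point is that the sequence is not split, so the filtration need not split. This is why the statement is phrased for $\gr\bigwedge^q\xi$ rather than for $\bigwedge^q\xi$ itself, and the local argument above handles it. I expect no genuine obstacle beyond keeping the indexing of $p$ consistent between the filtration and Lemma~\ref{l:xi-summands}.
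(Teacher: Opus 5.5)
Your proposal is correct and follows the paper's route. The paper likewise uses the natural filtration on $\bigwedge^q\xi$ induced by the non-split sequence \eqref{eq:iso-cotangent-ses-restr}, with $p$-th graded piece $\bigwedge^{q-p}\Sym^2(\R\oplus\sW)\otimes\bigwedge^{p}[(\R\oplus\sW)\otimes(\R^\perp/\R)^\ast]$, and reads off the multiplicities from Lemma~\ref{l:xi-summands}; the paper leaves the filtration implicit, so your explicit $F^p$, the local splitting check, and the argument that the isotypic decompositions globalize supply exactly the details it omits.
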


\subsection{Nonnormal and singular loci}
\label{ssec:loci}

We end this section with a description of the singular and nonnormal loci of isotropic Kalman varieties.

\begin{theorem}
  \label{t:nonnormal}
  The nonnormal locus for $\IKal(m)$ coincides with the singular locus, and we have
  \begin{equation*}
    \nnormal{(\IKal(m))}
    = \sing{(\IKal(m))}
    = \IKal(m - 1).
  \end{equation*}
\end{theorem}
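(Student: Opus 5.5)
The plan is to prove the chain of inclusions
\[
\nnormal(\IKal(m)) \subseteq \sing(\IKal(m)) \subseteq \IKal(m-1) \subseteq \nnormal(\IKal(m)).
\]
The first inclusion is automatic, since normal points are in particular not forced to be singular while smooth points are normal.

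For the second inclusion, take $\varphi \in \IKal(m) \setminus \IKal(m-1)$. By the proof of Proposition~\ref{p:birational}, the fibre $\pi'^{-1}(\varphi)$ is a single point $(U,\varphi)$ with $U \oplus W$ the unique invariant isotropic subspace. I will show $\pi'$ is an isomorphism over the open set $\sU = \IKal(m)\setminus\IKal(m-1)$. The morphism $\pi'$ is proper, since $\IGr(m,L)$ is projective, and on $\sU$ it is quasi-finite and bijective. It therefore suffices to check that the differential of $\pi'$ is injective at $(U,\varphi)$, i.e.\ that there are no nonzero tangent vectors along the fibre. A first-order deformation of $U$ by $h \in \operatorname{Hom}(U \oplus W, V/(U\oplus W))$, vanishing on $W$ and with image in the isotropic tangent directions, keeps $\varphi$ fixed exactly when $h$ intertwines the induced actions $\varphi|_{U\oplus W}$ and $\bar\varphi$ on the quotient.

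The nonexistence of such $h$ should follow from the same eigenvector argument used in Proposition~\ref{p:birational}. A nonzero intertwiner yields a common eigenvalue, and hence a larger invariant coisotropic or smaller invariant isotropic subspace containing $W$, which places $\varphi$ in $\IKal(m-1)$ via Lemma~\ref{l:sg-co-iso}. Having established this, $\sU \cong \pi'^{-1}(\sU)$ is an open subset of the smooth total space $Z$, so $\sing(\IKal(m)) \subseteq \IKal(m-1)$. I expect this infinitesimal injectivity to be the main technical step. If it resists a direct argument, I would instead use Zariski's main theorem: $\pi'$ is birational and has finite fibres over $\sU$, so over the normal locus it is an isomorphism. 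This would only give $\sing \subseteq \IKal(m-1) \cup \nnormal$, so the tangent computation is really needed.

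For the third inclusion, $\IKal(m-1) \subseteq \nnormal(\IKal(m))$, I will use the normalization $\widetilde{\IKal}(m) = \operatorname{Spec} \rH^0(Z,\sO_Z)$. By Theorem~\ref{t:norm-ikal-rational-sing} and Theorem~\ref{t:geom-method}, the map $Z \to \IKal(m)$ factors through the normalization, and the fibres of $Z \to \widetilde{\IKal}(m)$ are connected by Zariski's main theorem. Over a point $\varphi \in \IKal(m-1)$, the fibre $\pi'^{-1}(\varphi)$ is the variety of invariant $U \in \IGr(m,L)$. The key claim is that a generic $\varphi \in \IKal(m-1)$ has this fibre finite with at least two points. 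Such a generic $\varphi$ has an invariant $(m-1+r)$-dimensional isotropic $U_0 \supseteq W$, and $U_0^\perp/U_0$ carries an induced element of $\sp$ with distinct eigenvalues $\pm\lambda_i$. Each of the two eigenlines for $\pm\lambda_1$ then extends $U_0$ to a distinct invariant $U$. A finite disconnected fibre forces the normalization to have at least two points over $\varphi$, so $\varphi$ is a nonnormal point. Since the nonnormal locus is closed and $\IKal(m-1)$ is irreducible by Proposition~\ref{p:ikal-irrd}, the whole of $\IKal(m-1)$ lies in the nonnormal locus. The step to double-check here is that generic points of $\IKal(m-1)$ indeed have finite fibres, so that the fibre really is disconnected. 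This should follow from a dimension count as in Proposition~\ref{p:codim}, together with the explicit block-diagonal example from Proposition~\ref{p:birational} adapted to $m-1$.
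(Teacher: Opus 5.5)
The gap is in the inclusion $\IKal(m-1)\subseteq\nnormal(\IKal(m))$. You exhibit two points in the fibre over a generic $\varphi\in\IKal(m-1)$, but Zariski's main theorem needs the fibre to be \emph{disconnected}, and you never show it is finite. The routes you suggest do not supply this.

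\begin{itemize}
\item $\IKal(m-1)$ has codimension $r$ in $\IKal(m)$, and $\pi'^{-1}(\IKal(m-1))$ is a proper closed subset of the irreducible $Z$. So a dimension count only bounds the generic fibre dimension by $r-1$.
\item A block-diagonal example helps only if you can list \emph{all} invariant subspaces over it.
\end{itemize}

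The missing idea is the one the paper uses. Take $\varphi\in\IKal(m-1)\setminus\IKal(m-2)$, which is a nonempty open subset by the argument of Proposition~\ref{p:birational}. Then every $\varphi$-invariant isotropic $\widetilde U\supseteq W$ of dimension $m+r$ contains $U_0$. Otherwise $\widetilde U\cap U_0$ is a proper invariant subspace of $U_0$ containing $W$. Triangularizing $\varphi$ on $U_0/(\widetilde U\cap U_0)$ then produces an invariant isotropic subspace of dimension $m-2+r$ containing $W$, i.e.\ $\varphi\in\IKal(m-2)$. Consequently the fibre is exactly the set of eigenlines of $\bar\varphi$ on the $2(n-m+1)$-dimensional space $U_0^\perp/U_0$. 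For distinct eigenvalues this is a finite set of $2(n-m+1)\geq 2$ points. Only then is the fibre disconnected, and your argument goes through.

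For $\sing(\IKal(m))\subseteq\IKal(m-1)$ your structure is the paper's, and you are more careful than the paper. It simply asserts that $\pi'$ is an isomorphism over $\IKal(m)\setminus\IKal(m-1)$, whereas you correctly note that bijectivity is not enough and check the differential. That check works, but not through a ``common eigenvalue'', which by itself does not force $\varphi$ into $\IKal(m-1)$. The correct reason is short. A nonzero intertwiner $\tilde h\colon U\oplus W\to V/(U\oplus W)$ vanishing on $W$ has $\ker\tilde h$ a proper $\varphi$-invariant subspace of $U\oplus W$ containing $W$. Triangularizing on $(U\oplus W)/\ker\tilde h$ gives an invariant isotropic subspace of dimension $m+r-1$ containing $W$. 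Lemma~\ref{l:sg-co-iso} plays no role here.
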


\begin{proof}
  The main tool we use is Zariski's main theorem \stacks{0AY8} which states that the preimage of a normal point under a birational projective morphism is connected.
  This implies that the closure of the disconnected locus, that is, the points where the preimage is not connected, is contained in the nonnormal locus.

  Recall from Remark~\ref{r:ikal-inclusion} that there is a containment of isotropic Kalman varieties.
  Let $\sU$ be the complement of $\IKal(m-2)$ inside $\IKal(m-1)$.
  The set $\sU$ is open in $\IKal(m-1)$ and can be shown to be nonempty using an argument similar to the one given in the proof of Proposition~\ref{p:birational}.
  Let $ \varphi\in \sU$ and let $T$ be a $(m-1+r)$-dimensional isotropic subspace containing $W$ and fixed by $\varphi$.
  We see that every $(m+r)$-dimensional isotropic subspace containing $W$ that is fixed by $\varphi$ must contain $T$, otherwise its intersection with $T$ would be an invariant isotropic subspace containing $W$ of dimension at most $m-2+r$, which is a contradiction.

  Thus, every invariant $(m+r)$-dimensional isotropic subspace containing $W$ is of the form $T\oplus \Span(v)$ where $v$ is an eigenvector for the endomorphism $\ol\varphi$ induced by $\varphi$ on $T^\perp/T$.
  Generically, the eigenvalues of $\ol\varphi$ on $T^\perp / T$ are distinct.
  Since there are $2(n - m + 1) > 1$ many eigenvectors, the fiber over a generic point of $\IKal(m-1)$ contains finitely many points.
  It is therefore disconnected.

  Observe that we have a chain
  \begin{equation*}
    \IKal(m - 1)
    \subseteq \nnormal{(\IKal(m))}
    \subseteq \sing{(\IKal(m))}
    \subseteq \IKal(m - 1)
  \end{equation*}
  of containments  of closed subsets where the last containment holds because
  \begin{equation*}
    \pi' \colon \pi'^{-1}\left( \IKal(m) \setminus \IKal(m-1) \right) \to \IKal(m) \setminus \IKal(m - 1)
  \end{equation*}
  is an isomorphism and $Z$ is smooth.
\end{proof}

\section{Lagrangian Kalman Varieties}
\label{sec:Lagrangian}

In this section we apply the techniques developed earlier to study Lagrangian Kalman varieties in detail.
That is  we study the case $\IKal(n)$ when $\dim{L} = 2n$.
We end the section by giving the minimal free resolution of the Lagrangian Kalman variety when $\dim L= 2$.

Dimension and codimension calculations follow from Proposition~\ref{p:codim}.
\begin{corollary}
  We have
  \begin{align*}
    \dim \IKal(n) &= n(1 + r) + n^2 + (n + r)^2 + {r + 1 \choose 2},\\
    \codim_{\sp(V)} \IKal(n) &= nr + \binom{r+1}{2}.
  \end{align*}
  \label{c:codim:lagrangian}
\end{corollary}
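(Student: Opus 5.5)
This is the specialization $m = n$ of Proposition~\ref{p:codim}, so the plan is to substitute and simplify, and then sanity-check the result directly in the Lagrangian setting.

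\textbf{Step 1 (substitution).} Proposition~\ref{p:codim} gives $\dim \IKal(m) = n + mr + n^2 + (n+r)^2 + \binom{r+1}{2}$. Setting $m = n$ gives $n + nr + n^2 + (n+r)^2 + \binom{r+1}{2}$, and $n + nr = n(1+r)$. Likewise $\codim_{\sp(V)} \IKal(m) = r(2n-m) + \binom{r+1}{2}$ becomes $nr + \binom{r+1}{2}$ at $m = n$.

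\textbf{Step 2 (independent check).} I would recompute the rank of the defining bundle in the Lagrangian case. By Remark~\ref{r:Lagrangian-kalman}, $\sR^\perp/\sR = 0$, so $\xi = \Sym^2(\sR \oplus \sW)$ has rank $\binom{n+r+1}{2}$. The base $\IGr(n, L)$ has dimension $\binom{n+1}{2}$.

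Proposition~\ref{p:birational} says $\pi'$ is birational. Hence
\begin{equation*}
  \codim \IKal(n) = \rank \xi - \dim \IGr(n, L) = \binom{n+r+1}{2} - \binom{n+1}{2} = nr + \binom{r+1}{2}.
\end{equation*}
This agrees with Step 1.

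\textbf{Step 3 (dimension).} The dimension formula then follows from $\dim \sp(V) = (n+r)(2n+2r+1)$ by subtraction.

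\textbf{Main obstacle.} There is essentially none. The only care needed is that the formula in Proposition~\ref{p:codim} is consistent with the Lagrangian rank computation. Its proof writes $\dim \IGr(m, L)$ as $m(2n-m) - \binom{m}{2}$, which at $m = n$ equals $\binom{n+1}{2}$, so the two computations agree.
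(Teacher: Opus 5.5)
Your proposal is correct and matches the paper, which obtains the corollary simply by setting $m = n$ in Proposition~\ref{p:codim}. Your Step~2 cross-check, $\binom{n+r+1}{2} - \binom{n+1}{2} = nr + \binom{r+1}{2}$ using $\xi = \Sym^2(\sR \oplus \sW)$ and the birationality of $\pi'$, is the proof of Proposition~\ref{p:codim} specialized to the Lagrangian case, and it checks out.
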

In the Lagrangian case, some of the conditions on the defining equations of Theorem~\ref{t:defn-eq} end up being extraneous as we show below.
\begin{corollary}
  If $\varphi \in \sp(V)$ has a block decomposition given in \eqref{eq:lag-sp-blocks} then $\varphi \in \IKal(n)$ if and only if $C_1 = 0$ and $ C_0  A_0^k  A_1 = 0$ for all $0 \leq k \leq 2n-1$.
  \label{c:defn-eq:lagrangian}
\end{corollary}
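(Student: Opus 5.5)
This is Theorem~\ref{t:defn-eq} specialized to $m = n$. The plan is to show that the minor condition in that theorem is automatic in the Lagrangian case, so only the conditions $C_1 = 0$ and $C_0 A_0^k A_1 = 0$ survive.

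Taking $m = n$ in Theorem~\ref{t:defn-eq}, $\varphi \in \IKal(n)$ if and only if three conditions hold: $C_1 = 0$; $C_0 A_0^k A_1 = 0$ for $0 \le k \le 2n-1$; and the $(n+1)\times(n+1)$ minors of $\Theta = \Theta^{\bC}(\varphi)$ vanish. The third condition is equivalent to $\dim \ker\Theta \geq n$, as used in the proof of Theorem~\ref{t:defn-eq}. So it suffices to prove that whenever $C_1 = 0$ and $C_0A_0^kA_1 = 0$ hold, the rank of $\Theta$ is at most $n$. Note that $\Theta$ has $2n$ columns, indexed by $L$, so the minor condition is not vacuous and genuinely needs the symplectic structure.

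\textbf{Step 1.} Identify $\ker\Theta$ as an invariant subspace. By the Ottaviani--Sturmfels argument already invoked in the proof of Theorem~\ref{t:defn-eq}, the two linear conditions are equivalent to $W \subseteq \ker\Theta'$, and in that case $\ker\Theta' = \ker\Theta \oplus W$. This space is the largest $\varphi$-invariant subspace of $L\oplus W = W^\perp$. Indeed, any $\varphi$-invariant $U\subseteq W^\perp$ satisfies $CA^kU=0$ for all $k$, where $C$ is the block $W^\perp\to W^\ast$.

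\textbf{Step 2.} Show that $\ker\Theta'$ has dimension at least $n+r$, hence $\dim\ker\Theta \ge n$. I would take symplectic complements. Since $\varphi\in\sp(V)$, the complement $(\ker\Theta')^\perp$ is the smallest $\varphi$-invariant subspace containing $W$. The conditions say exactly that $\varphi(W)\subseteq W^\perp$ and that $W\subseteq\ker\Theta'$; together these give $W \subseteq (\ker\Theta')^\perp \subseteq \ker\Theta'$. So $\ker\Theta'$ is coisotropic, and hence $\dim\ker\Theta' \ge \tfrac12\dim V = n+r$. Subtracting $\dim W = r$ yields $\dim\ker\Theta\ge n$, and therefore $\operatorname{rank}\Theta \le n$.

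\textbf{Main obstacle.} The key is the chain of inclusions in Step 2, specifically $(\ker\Theta')^\perp\subseteq\ker\Theta'$. I would check it this way. The smallest invariant subspace containing $W$ is $\sum_k \varphi^k(W)$. This lies in $\ker\Theta'$ because $W\subseteq\ker\Theta'$ and $\ker\Theta'$ is $\varphi$-invariant. To identify this sum with $(\ker\Theta')^\perp$, I use that $\langle\varphi^k w, u\rangle = \pm\langle w,\varphi^k u\rangle$ for $\varphi\in\sp(V)$. This shows that $(\sum_k\varphi^kW)^\perp$ is the largest invariant subspace inside $W^\perp$, which by Step 1 is $\ker\Theta'$. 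Once this is verified, the reverse implication, and hence the corollary, follows directly from Theorem~\ref{t:defn-eq}.
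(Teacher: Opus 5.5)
Your argument is correct, but it takes a different route from the paper's. The paper never verifies the minor condition. It takes $T = \ker\Theta \oplus W$ and notes that $\rad(T)$ is a $\varphi$-invariant isotropic subspace containing $W$. It then applies Lemma~\ref{l:sg-lite} to $T/\rad(T)$ and Lemma~\ref{l:sg-co-iso} to the coisotropic space $(K\oplus\rad(T))^\perp \subseteq W^\perp$, which produces a $\varphi$-invariant Lagrangian directly. That Lagrangian lies in $W^\perp$, so it contains $W$ and has the form $U\oplus W$. Theorem~\ref{t:defn-eq} is cited only for the easy containment.

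You instead show that the $(n+1)\times(n+1)$ minors are genuinely redundant. The identity $\langle\varphi u,v\rangle=-\langle u,\varphi v\rangle$ gives $(\sum_k\varphi^kW)^\perp=\ker\Theta'$. Since also $\sum_k\varphi^kW\subseteq\ker\Theta'$, the space $\ker\Theta\oplus W$ is coisotropic, so $\dim\ker\Theta\ge n$. Theorem~\ref{t:defn-eq} then handles both directions.

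Your version makes clear why the Lagrangian case is special. Coisotropy yields only $\dim\ker\Theta\ge n$, which is exactly the required threshold $2n-m$ when $m=n$ and too weak when $m<n$. It also makes the paper's remark that some conditions are ``extraneous'' literally precise. The paper's version needs no dimension count, only the existence of some invariant isotropic subspace containing $W$, but it re-runs the invariant-subspace lemmas rather than reducing to the theorem.

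Both routes ultimately rest on Lemma~\ref{l:sg-co-iso}, since your appeal to Theorem~\ref{t:defn-eq} uses it to produce the invariant Lagrangian.
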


\begin{proof}
  Suppose $\varphi \in \sp(V)$ such that $ C_0  A_0^i  A_1 = 0$ for all $0 \leq i \leq 2n-1$ and $C_1 = 0$.
  Let $\Theta^\bC(\varphi)$ denote the type C reduced Kalman matrix associated to $\varphi$.
  As in proof of Theorem~\ref{t:defn-eq}, $T = \ker \Theta^\bC(\varphi) \oplus W$ is invariant under $\varphi$ and so $\varphi$ descends to an endomorphism on the symplectic vector space $T / \rad(T)$.
  By Lemma~\ref{l:sg-lite}, there exists a $\varphi$-invariant isotropic subspace of every possible dimension in $T/\rad(T)$.
  Say $K$ is one such isotropic subspace and so $K \oplus \rad(T)$ is an isotropic subspace of $V$ fixed by $\varphi$.
  Since $\varphi$ must also fix $(K \oplus \rad(T))^\perp$, by Remark~\ref{r:co-iso-sup-lag}, the latter space contains a Lagrangian subspace of $V$.
  By Lemma \ref{l:sg-co-iso}, $(K \oplus \rad(T))^\perp$ must contain a $\varphi$-invariant Lagrangian subspace of $V$.
  Further, since $W \subseteq \rad(T)$, the Lagrangian is of the form $U\oplus W$ for some $U\in \IGr(n,L)$. This completes the proof for one of the containments.
  The other containment follows from Theorem \ref{t:defn-eq}.
\end{proof}

\subsection{Syzygies of the normalization}
In the Lagrangian case, the bundle $\xi$ is semisimple because $R^\perp/R = 0$ and \eqref{eq:iso-cotangent-ses-restr} is an exact sequence.
In particular, we are able to explicitly calculate (parts of) the minimal free resolution of the normalization of $\IKal(n)$ by directly using Theorem~\ref{t:geom-method} and Borel--Weil--Bott (Theorem~\ref{t:bwb:iso}).

\begin{lemma}
  \label{l:lag-xi-summands}
  If $m = n$, the summands of $\bigwedge^q\xi$ are given by terms of the form
  \begin{equation*}
    c^\theta_{\lambda, \mu} c^\zeta_{\mu^\top, \nu} \bS_\theta\R \otimes \bS_\zeta \sW
  \end{equation*}
  such that $\lambda \in Q_1(2t)$, $|\mu| = s$, $\nu \in Q_1(2u)$ for some
  $s + t + u = q$.
\end{lemma}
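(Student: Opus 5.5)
When $m = n$, Remark~\ref{r:Lagrangian-kalman} gives $\xi \cong \Sym^2(\R \oplus \sW)$, so the task is to decompose $\bigwedge^q \Sym^2(\R \oplus \sW)$ as a $\GL(R) \times \GL(W)$-module; since both functors are compatible with base change, it suffices to do this on a fibre. The plan has three steps.

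First, split the symmetric square:
\begin{equation*}
  \Sym^2(\R \oplus \sW) = \Sym^2\R \oplus (\R \otimes \sW) \oplus \Sym^2\sW .
\end{equation*}

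Second, take exterior powers. The exterior power of a direct sum is the sum of tensor products of exterior powers, so
\begin{equation*}
  \bigwedge^q \xi = \bigoplus_{s + t + u = q} \bigwedge^t(\Sym^2\R) \otimes \bigwedge^s(\R \otimes \sW) \otimes \bigwedge^u(\Sym^2\sW).
\end{equation*}
Each factor is decomposed with a result already stated.
\begin{itemize}
  \item By \eqref{p:wedge-sym}, $\bigwedge^t \Sym^2\R = \bigoplus_{\lambda \in Q_1(2t)} \bS_\lambda\R$.
  \item Likewise, $\bigwedge^u \Sym^2\sW = \bigoplus_{\nu \in Q_1(2u)} \bS_\nu \sW$.
  \item By the Cauchy rule \eqref{p:cauchy}, $\bigwedge^s(\R \otimes \sW) = \bigoplus_{|\mu| = s} \bS_\mu\R \otimes \bS_{\mu^\top}\sW$.
\end{itemize}

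Third, regroup the $\R$ and $\sW$ factors and apply the Littlewood--Richardson rule \eqref{p:lr-rule} to each:
\begin{equation*}
  \bS_\lambda\R \otimes \bS_\mu\R = \bigoplus_\theta c^\theta_{\lambda,\mu}\, \bS_\theta\R, \qquad
  \bS_{\mu^\top}\sW \otimes \bS_\nu\sW = \bigoplus_\zeta c^\zeta_{\mu^\top,\nu}\, \bS_\zeta\sW .
\end{equation*}
Summing over $s + t + u = q$, $\lambda \in Q_1(2t)$, $|\mu| = s$ and $\nu \in Q_1(2u)$ gives exactly the terms in the statement, with the stated multiplicities.

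The only point needing care is that these fibrewise isomorphisms globalize to isomorphisms of bundles on $\IGr(n, L)$, not merely to a filtration. This holds because every decomposition used is a natural $\GL$-equivariant splitting in characteristic zero, and $\xi$ is semisimple in the Lagrangian case. There is no substantive obstacle. One bookkeeping point: $\sW$ is a trivial bundle, so the factors $\bS_\zeta\sW$ are trivial bundles with fibre the $\GL(W)$-representation $\bS_\zeta W$.
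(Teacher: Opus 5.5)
Your proposal is correct and matches the paper's proof essentially step for step. Like the paper, you split $\Sym^2(\R\oplus\sW)$ into $\Sym^2\R\oplus(\R\otimes\sW)\oplus\Sym^2\sW$ and expand the exterior power of this sum. You then decompose the three factors via \eqref{p:wedge-sym} and the Cauchy rule \eqref{p:cauchy}, and recombine with the Littlewood--Richardson rule \eqref{p:lr-rule}. Your remark that these natural decompositions globalize to bundle isomorphisms (because $\R^\perp/\R=0$ when $m=n$) is a harmless clarification that the paper leaves implicit.
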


\begin{proof}
  First using \eqref{p:wedge-sym} we decompose $\bigwedge^t \Sym^2\R =  \bigoplus_{\lambda \in Q_1(2t)} \bS_\lambda\R$ and $\bigwedge^u \Sym^2 \sW =  \bigoplus_{\nu \in Q_1(2u)} \bS_\nu \sW$.
  The Cauchy formula \eqref{p:cauchy} gives us $\bigwedge^s \R \otimes \sW = \bigoplus_{|\mu| = s} \bS_{\mu}\R \otimes \bS_{\mu^\top}\sW.$
  Combining the terms above using the Littlewood--Richardson rule \eqref{p:lr-rule} we obtain the final decomposition
  \begin{align*}
    \bigwedge^q \xi
    &= \bigwedge^q \left[ (\Sym^2\R) \oplus (\R \otimes \sW) \oplus (\Sym^2 \sW) \right] \\
    &= \bigoplus_{q = s + t + u}  \bigwedge^t\Sym^2\R \otimes \bigwedge^s(\R \otimes \sW) \otimes \bigwedge^u \Sym^2\sW \\
    &=
      \bigoplus_{\substack{q = s + t + u \\ |\mu| = s}}
    \bigoplus_{\substack{\lambda \in Q_1(2t) \\ \nu \in Q_1(2u)}}
    \bigoplus_{\substack{|\theta| = |\lambda| + |\mu| \\ |\zeta| = |\mu| + |\nu|}}
    c^\theta_{\lambda, \mu} c^{\zeta}_{\mu^\top, \nu}\bS_{\theta}\R \otimes \bS_{\zeta}\sW.
    \qedhere
  \end{align*}
\end{proof}

\begin{proposition}
  The term $\bF_0$ of the free resolution of $\widetilde \O_{n}$ is given by
  \begin{equation*}
    \bF_0
    = A \oplus A(-1)^{|Q_{1,n}(2)|} \oplus A(-2)^{\oplus |Q_{1,n}(4)|} \oplus \dots
    = \bigoplus_q A(-q)^{\oplus |Q_{1,n}(2q)|},
  \end{equation*}
  where $Q_{1,n}(2q) = \{\lambda \in Q_1(2q) \mid l(\lambda) \leq n\}$.
\end{proposition}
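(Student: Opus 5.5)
The plan is to read off $\bF_0$ from Proposition~\ref{p:norm-ikal-res}, which with $i = 0$ gives
\begin{equation*}
  \bF_0 = \bigoplus_{j \geq 0} \rH^j\big(\IGr(n, L), \textstyle\bigwedge^{j} \xi\big) \otimes A(-j).
\end{equation*}
The claim then amounts to two facts. First, only the summand $\bigwedge^j \Sym^2\R$ of $\bigwedge^j\xi$ contributes to $\rH^j$. Second, this summand contributes exactly one copy of the trivial representation for each $\lambda \in Q_{1,n}(2j)$.

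To set this up, I use Lemma~\ref{l:lag-xi-summands}, which writes $\bigwedge^j\xi$ as a sum of terms $c^\theta_{\lambda,\mu}c^\zeta_{\mu^\top,\nu}\,\bS_\theta\R \otimes \bS_\zeta\sW$ with $s+t+u = j$. Since $\sW$ is trivial, the cohomology of such a term is $\rH^j(\bS_\theta\R)\otimes \bS_\zeta W$. The degree satisfies $|\theta| = |\lambda| + |\mu| = 2t + s$. For the weight $\gamma = (-\theta_n,\dots,-\theta_1)$, Theorem~\ref{t:bwb:iso} identifies $\rH^\ell(\bS_\theta\R)$ with $\bS_{[\alpha]}L$, where $\alpha = \sigma\bullet\gamma$.

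The key inequality is the following. If $\sigma \bullet \gamma = \alpha$ is dominant, then
\begin{equation*}
  2\ell(\sigma) \leq |\theta| - |\alpha|.
\end{equation*}
To prove it, I would compare $\gamma + \rho$ with $\alpha + \rho$ entrywise. Each inversion, negative entry and negative pair in \eqref{def:len-type-c} should force a definite loss in the total $\sum_i(\gamma+\rho)_i$ relative to $\sum_i(\alpha+\rho)_i$. An alternative is to argue via the Dolbeault interpretation, as in the proof of Theorem~\ref{t:norm-ikal-rational-sing}. Granting the inequality, a nonzero contribution to $\rH^j$ requires $2(s+t+u) \leq 2t + s - |\alpha|$. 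This forces $s = u = 0$ and $\alpha = 0$. So only $\bigwedge^j \Sym^2\R$ survives in degree $j$, and it contributes only trivial $\Sp(L)$-representations. This inequality, together with its equality case, is the step I expect to be the main obstacle; I would first try to verify it by induction on $\ell(\sigma)$, tracking one simple reflection at a time.

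It remains to handle $s = u = 0$, where $\theta = \lambda \in Q_1(2j)$ and $\bS_\lambda\R = 0$ unless $l(\lambda) \leq n$. For $\lambda = (d+1+\alpha_1,\dots,d+1+\alpha_d,\alpha^\top_1,\dots)$ I would check directly that $\gamma + \rho$ is a signed permutation of $\rho$ of length exactly $j = (d^2+d)/2 + |\alpha|$, so that Theorem~\ref{t:bwb:iso} gives $\rH^j(\bS_\lambda \R) = \bC$. The expected mechanism is that the Durfee part $d+1+\alpha_i$ makes exactly the corresponding entries negative, while the $\alpha^\top$ part produces inversions. As a consistency check, $\bigwedge^j\Sym^2\R = \Omega^j_X$ for the Lagrangian Grassmannian $X = \IGr(n, L)$. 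Hence $\rH^j(\Omega^j_X)$ has dimension $b_{2j}(X)$, the number of strict partitions of $j$ with parts at most $n$. These are in bijection with $Q_{1,n}(2j)$ via shifted diagrams, which confirms the count. Assembling the degrees gives $\bF_0 = \bigoplus_q A(-q)^{\oplus|Q_{1,n}(2q)|}$.
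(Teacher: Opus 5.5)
Your reduction is sound: read $\bF_0$ off Proposition~\ref{p:norm-ikal-res}, decompose $\bigwedge^q\xi$ by Lemma~\ref{l:lag-xi-summands}, show only $s=u=0$ contributes to $\rH^q$, then identify those terms. The gap is that the key inequality $2\ell(\sigma)\le|\theta|-|\alpha|$, which is meant to kill the terms with $s>0$, is false, so no induction over simple reflections will prove it.

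Take $n=2$ and $\theta=(4)$, i.e.\ $\Sym^4\R$ on $\IGr(2,L)$. Then $\gamma+\rho=(2,-3)$ has one negative entry, one negative-sum pair and no inversions, so $\ell(\sigma)=2$ and $\alpha=(3,2)-(2,1)=(1,1)$. Hence $\rH^2(\IGr(2,L),\Sym^4\R)=\bS_{[1,1]}L\neq0$, while $2\ell(\sigma)=4>2=|\theta|-|\alpha|$. This term genuinely occurs in $\bigwedge^3\xi$ once $r\ge2$, as $\Sym^4\R\otimes\bigwedge^2\sW$ from $\lambda=(2)$, $\mu=(2)$.

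Even the weaker bound $2\ell(\sigma)\le|\theta|$, which would still suffice for your deduction, fails. For $k\ge n+2$, $\Sym^k\R$ has $\gamma+\rho=(n,\dots,2,1-k)$ and $\ell(\sigma)=n$, so $n=4$, $k=6$ gives $8>6$. No estimate in $|\theta|$ and $|\alpha|$ alone can work. The Dolbeault route from Theorem~\ref{t:norm-ikal-rational-sing} does not substitute either. The vanishing used there concerns twists by $\bS_\lambda\R^\ast$, whereas the twists $\bS_\mu\R$ coming from $\bigwedge^s(\R\otimes\sW)$ go the other way. The example above ($\Sym^4\R\subseteq\Omega_X\otimes\Sym^2\R$ with $\rH^2\neq0$) shows that cohomology above degree $t$ really occurs.

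The missing idea is a bound that uses how $\theta$ arises: $\lambda\subseteq\theta$ with $\lambda\in Q_1(2t)$ and $|\theta/\lambda|=s$. Because $\gamma+\rho$ is strictly decreasing, $\ell(\sigma)=N(\theta):=\#\{(i,j): i\le j,\ \theta_i+\theta_j>i+j\}$.
\begin{itemize}
\item For $\lambda\in Q_1(2t)$ one has $N(\lambda)=t$, and no pair satisfies $\lambda_i+\lambda_j=i+j$. So the first added box leaves $N$ unchanged.
\item Adding a box in row $k$ of a partition $\theta'$ can only create pairs $(k,j)$ with $\theta'_j-j=k-\theta'_k$. Since $j\mapsto\theta'_j-j$ is strictly decreasing, there is at most one such $j$.
\end{itemize}
Hence $\ell(\sigma)\le t+s-1<q$ whenever $s\ge1$; this is the paper's argument in Lemmas~\ref{l:s0case} and~\ref{l:s>0case}.

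For $s=0$, your Hodge-theoretic remark is a clean alternative to the paper's direct Borel--Weil--Bott check. Here $\rH^q(X,\Omega^t_X)=\rH^{t,q}(X)$ vanishes if $u>0$. For $u=0$ it is a trivial $\Sp(L)$-module of dimension $b_{2t}(X)=|Q_{1,n}(2t)|$. If you do the direct check instead, note that there are no inversions. The $|\alpha|$ part of the length comes from negative-sum pairs $(i,d+k)$ with $(i,k)$ in the diagram of $\alpha$.
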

\begin{proof}
  From \eqref{eq:norm-ikal-res}, the term $\bF_0$ is given by the direct sum of the cohomology groups
  \begin{equation*}
    \bF_0 = \bigoplus_{q \geq 0} \rH^q(\IGr(n, L), \bigwedge^q\xi) \otimes A(-q).
  \end{equation*}
  From Lemma~\ref{l:lag-xi-summands}, the summands of $\bigwedge^q \xi$ are terms of the form $\bS_\theta \sR \otimes \bS_\zeta \sW$ appearing with multiplicity $c^\theta_{\lambda, \mu} c^\zeta_{\mu^\top \nu}$.
  Lemma~\ref{l:f0-terms} shows that these terms have cohomology in degree $q$ if and only if $\theta \in Q_1(2q)$ and $\zeta = \emptyset$.
  Moreover, the cohomology groups are all trivial as representations.

  Finally, if $\theta \in Q_1(2q)$ and $\zeta = \emptyset$ then $\theta = \lambda$, $|\mu| = |\nu| = |\zeta| = 0$ and $c^\lambda_{\lambda, \emptyset} c^\emptyset_{\emptyset, \emptyset} = 1$.
  The claim follows immediately.
\end{proof}

The result above relies on a sequence of combinatorial lemmas involving Borel--Weil--Bott.
\begin{lemma}
  \label{l:s0case}
  Let $\sigma \in W_n$ be a signed permutation, $\lambda \in Q_1(2t)$ and
  $\gamma = (n - \lambda_n, \dots, 1 - \lambda_1) $. If all entries of
  $\sigma(\gamma)$ are positive and strictly decreasing then
  $\ell(\sigma) = t$.
\end{lemma}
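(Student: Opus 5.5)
The plan is to compute $\ell(\sigma)$ through the combinatorial formula \eqref{def:len-type-c}, applied directly to the tuple $\gamma$. The hypothesis says that $\sigma$ sorts $\gamma$ into a strictly decreasing positive tuple. So the entries of $\gamma$ have distinct nonzero absolute values, $\sigma$ is uniquely determined, and $\sigma$ is the signed permutation whose one-line notation records the positions and signs of the entries of $\gamma$. With the convention that the statistics of $\sigma$ are those of $(\sigma(n),\dots,\sigma(1))$, this gives
\[
  \ell(\sigma) = |\Inv(\gamma)| + |\Neg(\gamma)| + |\Nsp(\gamma)|.
\]
Checking that the indexing conventions line up, namely the reversal in the definition of the statistics against the reversed order in $\gamma$, is the one place where care is needed. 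I expect this bookkeeping to be the main obstacle rather than the counting itself.

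Next I would parametrize the tuple. Set $x_i = \lambda_i - i$ for $1 \le i \le n$; this sequence is strictly decreasing. Then $\gamma = (-x_n, \dots, -x_1)$ is also strictly decreasing, so $\Inv(\gamma) = \emptyset$. Write $\lambda = (d+1+\alpha_1,\dots,d+1+\alpha_d,\alpha^\top_1,\dots,\alpha^\top_d)$ as in Remark~\ref{r:q1-parts}. An entry $-x_i$ is negative exactly when $\lambda_i > i$. For $i \le d$ we have $\lambda_i \ge d+1 > i$. For $i > d$ we have $\lambda_i = \alpha^\top_{i-d} \le d < i$, using $\alpha^\top_1 \le d$. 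Hence $|\Neg(\gamma)| = d$.

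For negative pairs, a pair of positions $i<j$ contributes exactly when $\lambda_i + \lambda_j > i + j$. I would split into three cases. If both $i,j \le d$, every pair contributes, since $\lambda_i+\lambda_j \ge 2d+2 > i+j$; this gives $\binom{d}{2}$. If both $i,j > d$, no pair contributes, since $\lambda_i + \lambda_j \le 2d < i+j$. In the mixed case $i \le d < j = d+k$, the condition becomes $\alpha_i + \alpha^\top_k \ge i + k$. For a cell $(i,k)\in\alpha$, the hook length $\alpha_i - k + \alpha^\top_k - i + 1$ is at least $1$, so the condition holds. For $(i,k)\notin\alpha$ we have $\alpha_i < k$ and $\alpha^\top_k < i$, so $\alpha_i + \alpha^\top_k \le i+k-2$ and the condition fails. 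Every cell of $\alpha$ has $i \le d$, and $d+k \le l(\lambda) \le n$ keeps $j$ in range, so the mixed pairs are in bijection with the cells of $\alpha$ and contribute $|\alpha|$.

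Summing the three contributions gives
\[
  \ell(\sigma) = 0 + d + \binom{d}{2} + |\alpha| = \frac{d^2+d}{2} + |\alpha| = t,
\]
where the last equality uses $2t = d^2 + d + 2|\alpha|$ from Remark~\ref{r:q1-parts}.
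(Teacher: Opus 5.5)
Your proposal is correct and follows the paper's proof essentially step for step. In both, there are no inversions because the tuple is already strictly decreasing, $|\Neg| = d$ from the Durfee square, and the negative-sum pairs split into $\binom{d}{2}$ pairs inside the square, none below it, and mixed pairs in bijection with the cells of $\alpha$, giving $d + \binom{d}{2} + |\alpha| = t$. Your mixed case is slightly more careful than the paper's, since you state the bound $\alpha_i + \alpha^\top_k \le i+k-2$ off $\alpha$ and check $d+k \le l(\lambda) \le n$. Like the paper, you implicitly read $\Inv$ as counting pairs out of decreasing order, which is what the reversal convention for the statistics of $\sigma$ is meant to encode.
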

\begin{proof}
  Since $\lambda_1 \geq \dots \geq \lambda_n$, we have $n - \lambda_n \geq \dots \geq 1 - \lambda_1$ and $\Inv(\gamma + \rho) = \varnothing$ and $\ell(\sigma) = |\Neg(\gamma + \rho)| + |\Nsp(\gamma + \rho)|$.
  In our setup, the conditions for the sets $\Neg(\gamma + \rho)$ and $\Nsp(\gamma + \rho)$ are equivalent to
  \begin{align*}
    i \in \Neg(\gamma + \rho) &\iff \lambda_i > i, \\
    (i, j) \in \Nsp(\gamma + \rho) &\iff \lambda_i + \lambda_j > i + j.
  \end{align*}
  Since $\lambda \in Q_1(2t)$, there exists a partition $\alpha = (\alpha_1, \dots, \alpha_d)$ such that $\alpha_1^\top \leq d$ and
  \begin{equation*}
    \lambda_i =
    \begin{cases}
      1 + d + \alpha_i  &i \leq d \\
      \alpha^\top_{i - d} &i > d
    \end{cases};
  \end{equation*}
  above, $d$ is the size of the Durfee square of $\lambda$, see Remark~\ref{r:q1-parts}.
  So $\lambda_i > i$ if and only if $i \leq d$ and $|\Neg(\gamma + \rho)| = d$.

  Now pick $(i, j)$ satisfying $1 \leq i < j \leq n$.
  If $i < j \leq d$ then
  \begin{equation*}
    \lambda_i + \lambda_j = \alpha_i + \alpha_j + 2(d + 1) > i + j;
  \end{equation*}
  there are ${d \choose 2}$ such negative sum pairs.
  When $d < i < j \leq n$ we have
  \begin{equation*}
    \lambda_i + \lambda_j = \alpha_{i - d}^\top + \alpha_{j - d}^\top \leq 2\alpha_1 \leq 2d < i + j
  \end{equation*}
  and $(i, j) \notin \Nsp(\gamma + \rho)$.

  Finally suppose $1 \leq i \leq d < j \leq n$.
  Then $\lambda_i + \lambda_j = 1 + d + \alpha_i + \alpha_k^\top$ where $k = j - d$.
  We recall that the Young diagram of $\alpha$ is given by
  \begin{align*}
    D(\alpha) &= \left\{ (i', j') \mid 1 \leq j' \leq \alpha_{i'}, 1 \leq i' \leq l(\alpha) \right\} \\
         &= \left\{ (i', j') \mid 1 \leq i' \leq \alpha_{j'}^\top, 1 \leq j' \leq l(\alpha^\top) \right\}.
  \end{align*}
  So if $(i, k) \in D(\alpha)$ then $\alpha_i + \alpha_k^\top \geq i + k$.
  Conversely if $(i, k) \notin D(\alpha)$ then $\alpha_i < k$, $\alpha^\top_k < i$ and $\alpha_i + \alpha_k^\top < i + k-1$.
  Thus, when $(i, k)\in D(\alpha)$, 
  \begin{equation*}
    \alpha_i + \alpha_k^\top \geq i + k
    \iff
    1 + d + \alpha_i + \alpha_k^\top > i + k + d
    \iff
    \lambda_i + \lambda_j > i + j;
  \end{equation*}
  when $(i, k)\notin D(\alpha)$, 
  \begin{equation*}
    \alpha_i + \alpha_k^\top < i + k-1
    \iff
    1 + d + \alpha_i + \alpha_k^\top < i + k + d
    \iff
    \lambda_i + \lambda_j < i + j.
  \end{equation*}
  We therefore conclude
  \begin{equation*}
    \ell(\sigma)
    = |\Neg(\gamma + \rho)| + |\Nsp(\gamma + \rho)|
    = d + {d \choose 2} + |\alpha| = \frac{d^2 + d + 2|\alpha|}{2}
    = \frac{|\lambda|}{2} = t.
  \end{equation*}
\end{proof}

\begin{lemma}
  \label{l:s>0case}
  Let $\sigma \in W_n$ be a signed permutation, $\theta = (\theta_1, \dots, \theta_n)$ a partition that occurs in the summand in Lemma~\ref{l:lag-xi-summands}, and $\gamma = (- \theta_n, \dots, - \theta_1)$.
  If the entries of $\sigma(\gamma + \rho)$ are all positive and strictly decreasing, then $\ell(\sigma) \leq q$.
  Furthermore, equality holds exactly when $\theta\in Q_1(2q)$, that is, when the weight $\theta$ occurs as a summand of $\bigwedge^q \Sym^2 \R$.
\end{lemma}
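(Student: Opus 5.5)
The plan is to reduce the lemma to a purely combinatorial inequality on partitions, following the same bookkeeping as in the proof of Lemma~\ref{l:s0case}. We have $\gamma+\rho = (n-\theta_n,\dots,1-\theta_1)$ with weakly decreasing entries, so $\Inv(\gamma+\rho)=\varnothing$. Hence $\ell(\sigma) = N(\theta)$, where
\begin{equation*}
  N(\theta) = \#\{ i \mid \theta_i > i\} + \#\{ (i,j) \mid i<j,\ \theta_i+\theta_j > i+j\}.
\end{equation*}
By Lemma~\ref{l:lag-xi-summands}, $\theta$ satisfies $c^\theta_{\lambda,\mu}\neq 0$ with $\lambda\in Q_1(2t)$, $|\mu|=s$ and $s+t+u=q$, so $|\theta| = 2t+s$. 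The whole lemma then follows from the key claim
\begin{equation*}
  N(\theta)\le |\theta|/2, \quad \text{with equality if and only if } \theta \in Q_1(|\theta|).
\end{equation*}
Granting the claim, $\ell(\sigma)\le t+s/2\le s+t+u=q$. Equality forces $s=0$ and $u=0$, so $\theta=\lambda\in Q_1(2t)=Q_1(2q)$. Conversely, if $\theta\in Q_1(2q)$ then Lemma~\ref{l:s0case} already gives $\ell(\sigma)=q$.

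To prove the claim, I would decompose $\theta$ along its Durfee square of size $d$. Write $\theta_i = d + a_i$ for $i\le d$, where $(a_i)$ is a partition (the arm), and write $\theta_{d+k} = b_k$, where $(b_k)$ is a partition with $b_1\le d$ (the leg). Then $|\theta| = d^2+|a|+|b|$. Exactly as in Lemma~\ref{l:s0case}, the contributions split into four pieces:
\begin{itemize}
\item $\theta_i>i$ forces $i\le d$, so the first set has at most $d$ elements, with equality iff $a_d\ge 1$;
\item all pairs $i<j\le d$ count, giving $\binom d2$ (at most);
\item pairs with $d<i<j$ never count, since $b_i+b_j\le 2d<i+j$;
\item the mixed pairs $(i,d+k)$ count iff $a_i + b_k \ge i+k+1$.
\end{itemize}
Setting $x_i = a_i-1$, the claim reduces to
\begin{equation*}
  P := \#\{(i,k)\mid x_i + b_k \ge i+k\} \le \tfrac{1}{2}\bigl(|a|+|b|-d\bigr) + \tfrac{1}{2}\,\#\{i\le d \mid a_i=0\}\cdot(\text{correction}),
\end{equation*}
and in the clean case where all $a_i\ge1$ it reads $P\le (|x|+|b|)/2$, with equality iff $b = x^\top$, i.e.\ iff $\theta\in Q_1$.

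The main obstacle is this box-counting inequality $\#\{(i,k) : x_i+y_k\ge i+k\}\le (|x|+|y|)/2$ for partitions $x,y$, together with its equality case. My first attempt would be the following observation: a counted pair satisfies $x_i\ge k$ or $y_k\ge i$, and $\#\{(i,k): x_i\ge k\}=|x|$ while $\#\{(i,k): y_k\ge i\}=|y|$. It therefore suffices to show that each counted pair lies in both diagrams $D(x)$ and $D(y^\top)$, so that the counted set injects into $D(x)\cap D(y^\top)$, whose size is at most $\min(|x|,|y|)\le(|x|+|y|)/2$. That inclusion might fail, for instance when $x_i\ge k$ but $y_k<i$. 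In that case I would instead use induction on $|x|+|y|$ by removing a corner box from the longer of $x,y^\top$, checking that $P$ drops by at most $1/2$ on average per box and stays put exactly when $y=x^\top$.

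Separately, I need to handle rows with $a_i=0$ (so $x_i=-1$). Each such row loses its Neg contribution, and I expect it also to lose at least as much in mixed pairs as it saves in $|a|$. This makes the inequality strict in that case, consistent with $\theta\notin Q_1$.
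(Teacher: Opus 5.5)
Your key claim $N(\theta)\le|\theta|/2$ is false, so the reduction does not go through. Take $n=3$ and $\theta=(5)$. Then $\gamma+\rho=(3,2,-4)$ is regular, so the lemma's hypothesis holds. You get $N(\theta)=3$: the index $i=1$ counts since $\theta_1>1$, and the pairs $(1,2)$ and $(1,3)$ count since $5>3$ and $5>4$. But $|\theta|/2=5/2$.

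This $\theta$ genuinely occurs in Lemma~\ref{l:lag-xi-summands}. Take $\lambda=(2)\in Q_1(2)$, $\mu=(3)$ and $u=0$; this is allowed once $r\ge 3$, because $\bS_{(3)}\R\otimes\bS_{(1,1,1)}\sW\subseteq\bigwedge^3(\R\otimes\sW)$. Then $t=1$, $s=3$, $q=4$, and $\ell(\sigma)=3$ exceeds your intermediate bound $t+s/2=5/2$, even though the lemma's bound $3\le 4$ holds.

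In your reduced notation this is $d=1$, $x=(3)$, $b=\varnothing$. Both $(1,1)$ and $(1,2)$ satisfy $x_i+b_k\ge i+k$, so $P=2>3/2$. A long arm over a short leg produces about one mixed pair per box, not one half. So neither the injection into $D(x)\cap D(y^\top)$ nor an induction losing ``$1/2$ per box'' can be completed. Your equality clause is also wrong: for $n=2$, $\theta=(4)$ is regular and has $N(\theta)=2=|\theta|/2$, yet $(4)\notin Q_1(4)=\{(3,1)\}$. You do not actually need that clause, since equality in $t+s/2\le s+t+u$ already forces $s=u=0$.

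What is true, and what the paper proves, is a bound that charges each box of $\mu$ with weight $1$. It uses the containment $\theta\supseteq\lambda$ rather than $|\theta|$ alone. Write $N(\theta)=\#\{i\le j : (\theta_i-i)+(\theta_j-j)>0\}$. Adding one box in row $k$ to a partition $\theta'$ can only create new pairs $(k,j)$ with $\theta'_k+\theta'_j=k+j$, that is, $\theta'_j-j=k-\theta'_k$. Since $j\mapsto\theta'_j-j$ is strictly decreasing, there is at most one such $j$, so $N$ grows by at most $1$ per box.

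For $\lambda\in Q_1(2t)$, the computation in Lemma~\ref{l:s0case} shows $\lambda_i+\lambda_j\neq i+j$ for all $i\le j$, so the first added box creates no new pair. Adding the boxes of $\theta/\lambda$ one corner at a time therefore gives $N(\theta)\le t+\max(s-1,0)$. Hence $\ell(\sigma)\le q$, with equality only when $s=u=0$, i.e.\ $\theta=\lambda\in Q_1(2q)$.
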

\begin{proof}
  Since $\theta_1 \geq \dots \geq \theta_n$, we have $n - \theta_n \geq \dots \geq 1 - \theta_1$, and $|\Inv(\gamma)| = 0$.
  Hence, $\ell(\sigma) = |\Neg(\gamma + \rho)| + |\Nsp(\gamma + \rho)|$.
  So, we want to show that $|\Neg(\gamma + \rho)| + |\Nsp(\gamma + \rho)| \leq q$.

  To simplify our notation, we expand the definition of $\Nsp$ (only for this proof) to also include $\Neg$ by allowing pairs of the form $(i,i)$ if $\lambda_i > i$.
  That is, we define
  \begin{equation*}
    \Nsp(\gamma + \rho) = \{ (i,j) \, | \, i \leq j, \lambda_i + \lambda_j > i + j \}.
  \end{equation*}

  From the proof of Lemma~\ref{l:lag-xi-summands}, $\theta$ is obtained by adding $s$ boxes to a partition $\lambda \in Q_1(2t)$.
  So, we will prove for $s>0$ that $|\Nsp(\gamma + \rho)| < s + t + u$ by carrying out induction on the number of boxes $s$.
  Note that $q = s + t + u$ according to the notation of Lemma~\ref{l:lag-xi-summands}.
  The $s=0$ case follows from Lemma \ref{l:s0case}.

  We first do the base case $s=1$.
  Suppose that $\theta$ is obtained by adding a box to $\lambda\in Q_1(2t)$ in the $k$-th row.
  Let $\gamma'$ denote $(n - \lambda_n, \dots, 1 - \lambda_1)$.
  We will show that the negative sum pairs are unaffected and so $\Nsp(\gamma+\rho) = \Nsp(\gamma'+\rho)$.
  Clearly, $\Nsp(\gamma'+\rho)\subseteq \Nsp(\gamma+\rho)$ as all the parts in $\theta$ are at least as large as the parts in $\lambda$.
  For the other inclusion, if $(i,j)\in \Nsp(\gamma+\rho)$ then $\lambda_i+\lambda_j\geq \theta_i+\theta_j-1\geq i+j$.
  Thus, we have $(i,j)\in \Nsp(\gamma'+\rho)$ if we establish $\lambda_i+\lambda_j\neq i+j$.
  Proof of Lemma \ref{l:s0case} implies that $\lambda_i+\lambda_j=i+j$ doesn't hold for any choice of $(i, j)$.
  Therefore, we conclude that $\ell(\sigma)=|\Nsp(\gamma+\rho)|=|\Nsp(\gamma'+\rho)|=t$ where the last equality follows from Lemma \ref{l:s0case}.
  Since $q=t+s+u>t$, we conclude that $\ell(\sigma)<q$.

  For the inductive step, we assume that the partition $\theta$ has been obtained form $\lambda \in Q_1(2t)$ by adding $s > 1$ boxes.
  If we remove one of these $s$ boxes from $\theta$ in such a way that the skew partition obtained after the removal remains a partition $\theta'$, then $\theta'$ is obtained from $\lambda$ by adding $s - 1$ boxes.
  It is indeed possible to remove a box from $\theta$ and still get a partition: of the $s$ boxes that were added, we remove the rightmost box with the largest row-index.

  Let $\gamma' = (n - \theta'_n, \dots, 1 - \theta'_1)$.
  Now, by the inductive hypothesis, $|\Nsp(\gamma'+\rho)| < s-1 + t + u$.
  If $k$ is the row of $\theta$ from which the single box was removed to obtain $\theta'$, then the entries of $\theta$ are given by
  \begin{equation*}
    \theta_i =
    \begin{cases}
      \theta'_i +1 &  \text{if } i = k\\
      \theta'_i    &  \text{if } i \neq k
    \end{cases}.
  \end{equation*}
  It is clear from the definition of $\Nsp$ that $|\Nsp(\gamma + \rho)| \geq |\Nsp(\gamma' + \rho)|$.

  Now, we will show that $|\Nsp(\gamma + \rho)| \leq 1 + |\Nsp(\gamma' + \rho)|$.
  We will prove this by contradiction.
  Suppose that $|\Nsp(\gamma + \rho)| - |\Nsp(\gamma' + \rho)| > 1$ after a box is added to row $k$ of $\theta'$.
  This means that there are at least two pairs $(k, j)$ and $(k, l)$ such that $\theta_k + \theta_j > k + j$ and $\theta_k + \theta_{l} > k + l$, but also $\theta'_k+ \theta'_l \leq k + l$ and $\theta'_k + \theta'_j \leq k + j$ as $(k,j), (k,l) \not\in \Nsp(\gamma'+\rho)$.
  The only way this is possible is if $\theta'_k + \theta'_l =  k + l$ and $\theta'_k + \theta'_j = k + j$.
  Since $j \neq l$ are distinct, we may assume $j < l$.
  So we have $\theta'_k + \theta'_l = k + l > k + j = \theta'_k + \theta'_j$.
  This implies that $\theta'_l > \theta'_j$ even though $l > j$ and $\theta'$ is a partition.
  This is a contradiction and we have shown that $|\Nsp(\gamma+\rho)| \leq 1 + |\Nsp(\gamma'+\rho)|$.

  Finally, we use the inductive hypothesis on $|\Nsp(\gamma' + \rho)|$ to conclude
  \begin{equation*}
    \ell(\sigma)
    = |\Nsp(\gamma  +\rho)|
    \leq |\Nsp(\gamma' + \rho)| + 1
    < s - 1 + t + u + 1
    = s + t + u
    = q.
  \end{equation*}
  This completes the proof.
  It is clear from our proof that $\ell(\sigma)=q$ cannot occur when $s>1$.
  When $u>0$ and $s=0$, we have $\ell(\sigma)=t<q$ from Lemma \ref{l:s0case}.
  Thus, equality occurs exactly when we have $t=q$, $s=0$ and $u=0$.
\end{proof}

\begin{lemma}
  \label{l:f0-terms}
  The summand $\bS_\theta \sR \otimes \bS_\zeta \sW$ of $\bigwedge^q \xi$ has nonzero cohomology in degree $q$ exactly when $\theta \in Q_1(2q)$ and $\zeta = \varnothing$.
  Furthermore, in this case, $\rH^q(\IGr(n, L), \bS_\theta\sR) \otimes \bS_\zeta \sW = \bC$.
\end{lemma}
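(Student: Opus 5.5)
The plan is to treat $\sW$ as a trivial bundle, so that $\bS_\zeta\sW$ is simply the trivial bundle with fiber $\bS_\zeta W$. Hence
\[
\rH^q\bigl(\IGr(n,L),\bS_\theta\sR\otimes\bS_\zeta\sW\bigr)=\rH^q\bigl(\IGr(n,L),\bS_\theta\sR\bigr)\otimes\bS_\zeta W .
\]
Everything then reduces to Borel--Weil--Bott (Theorem~\ref{t:bwb:iso}) on the Lagrangian Grassmannian. Since $\sR^\perp/\sR=0$, the relevant weight is $\gamma=(-\theta_n,\dots,-\theta_1)$ with $\mu=\varnothing$, and by the theorem cohomology is nonzero in at most one degree $\ell(\sigma)$. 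There $\sigma$ is the unique signed permutation making $\sigma(\gamma+\rho)$ positive and strictly decreasing, and the resulting representation is $\bS_{[\alpha]}L$ with $\alpha=\sigma(\gamma+\rho)-\rho$.

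For the "only if" direction, suppose the summand has nonzero $\rH^q$. Then $\gamma+\rho$ is regular and the unique $\sigma$ satisfies $\ell(\sigma)=q$. Since $\theta$ arises as in Lemma~\ref{l:lag-xi-summands}, Lemma~\ref{l:s>0case} gives $\ell(\sigma)\le q$, with equality exactly when $\theta\in Q_1(2q)$. Inspecting its proof, equality also forces $s=u=0$ and $t=q$. Then $|\mu|=s=0$ and $\nu\in Q_1(0)$ give $\nu=\varnothing$, so $\zeta$, which is an LR-constituent of $\mu^\top$ and $\nu$, must be empty. This identifies the parameters from the decomposition, not just the shape of $\theta$. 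That matters because the same $\theta$ can occur in several ways, and I will handle this by working summand by summand, each summand labelled by $(s,t,u,\lambda,\mu,\nu)$.

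For the converse and the identification of the cohomology, take $\theta=\lambda\in Q_1(2q)$ and $\zeta=\varnothing$. I will compute $\gamma+\rho=(n-\lambda_n,\dots,1-\lambda_1)$ explicitly. Using the description $\lambda_i=1+d+\alpha_i$ for $i\le d$ and $\lambda_{d+k}=\alpha^\top_k$ from Remark~\ref{r:q1-parts}, the entries $i-\lambda_i$ are negative for $i\le d$ and positive for $i>d$. The key claim is that the multiset of absolute values $\{|i-\lambda_i|\}$ is exactly $\{1,\dots,n\}$. This is the standard hook/Frobenius complementarity: in modified Frobenius coordinates $\lambda=(a_1+1,\dots\mid a_1,\dots)$, the sets $\{\lambda_i-i\}_{i\le d}$ and $\{k-\lambda_{d+k}\}$, shifted appropriately, partition $\{1,\dots,n\}$.

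Granting the claim, $\gamma+\rho$ is regular and $\sigma(\gamma+\rho)=\rho$, so $\alpha=0$. Thus $\rH^{\ell(\sigma)}=\bS_{[0]}L=\bC$, and Lemma~\ref{l:s0case} gives $\ell(\sigma)=t=q$.

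I expect this complementarity claim to be the main obstacle: one must check that no two entries of $\gamma+\rho$ have equal or opposite absolute value and that none is zero. I will prove it by induction on the Durfee square, or by the classical lemma that for any partition the sets $\{\lambda_i-i+n\}$ and $\{n-1+j-\lambda^\top_j\}$ are complementary, specialized to the shape $\lambda_i=\lambda^\top_i+1$ for $i\le d$. The length condition $l(\lambda)\le n$, needed so that $\bS_\lambda\sR\ne0$, is implicit because $\theta$ indexes a summand of a bundle of rank $n$.
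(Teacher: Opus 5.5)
Your proposal is correct. The reduction to $\rH^q(\IGr(n,L),\bS_\theta\sR)\otimes\bS_\zeta W$ matches the paper, and so does the use of Lemma~\ref{l:s>0case} to force $s=u=0$ and $t=q$, hence $\zeta=\varnothing$. You could even skip ``inspecting its proof'': if $\theta\in Q_1(2q)$ occurs with $q=s+t+u$, then $2q=|\theta|=2t+s$ already gives $s+2u=0$.

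Where you differ is in showing the cohomology is $\bC$. The paper uses a pigeonhole argument. From $l(\theta)\le n$ and the $Q_1$ shape it gets $\theta_1\le n+1$. Since $\gamma+\rho=(n-\theta_n,\dots,1-\theta_1)$ is monotone, every entry then has absolute value at most $n$. The cohomology is already known to be nonzero (the paper extracts this from Lemma~\ref{l:s0case}), so these $n$ absolute values are distinct and nonzero. Hence they are exactly $\{1,\dots,n\}$, and the sorted weight is $\rho$.

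You instead prove $\{|i-\lambda_i|\}_{i\le n}=\{1,\dots,n\}$ directly. You use the complementarity of $\{\lambda_i-i+n\}_{i\le n}$ and $\{n-1+j-\lambda^\top_j\}_{j\le m}$ in $\{0,\dots,m+n-1\}$, for any $m\ge\lambda_1$. Subtracting $n$ and keeping only the negative members gives $\{\lambda_i-i\}_{d<i\le n}\sqcup\{j-1-\lambda^\top_j\}_{j\le d}=\{-n,\dots,-1\}$. Then $\lambda_j=\lambda^\top_j+1$ for $j\le d$ turns this into your claim, with $l(\lambda)\le n$ used to cut off at $n$.

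Your route gives regularity of $\gamma+\rho$ and $\alpha=0$ in one step. So it does not lean on the nonvanishing taken from Lemma~\ref{l:s0case}, whose statement presupposes that the sorting $\sigma$ exists; only its proof shows the relevant inequalities are strict. The paper's version is shorter but needs that input, while yours is self-contained at the cost of citing the classical Frobenius complementarity lemma.
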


\begin{proof}
  The first part follows from Lemma \ref{l:s>0case}.
  Only thing left to show is that when $\theta\in Q_1(2q)$, $\rH^q(\IGr(n,L),\bS_\theta\sR) = \bC$.
  To see this, observe that from Lemma \ref{l:s0case},  $\rH^q(\IGr(n,L),\bS_\theta\sR)$ is nonzero.
  Let $\sigma$ be the permutation such that $\sigma(\gamma)$ is weakly decreasing with positive entries and let $\alpha=\sigma(\gamma)$.
  To show that the cohomology is the trivial representation, it is enough to check that $\alpha=\rho$.

  As $\rank{\sR} = n$, $l(\theta) \le n$.
  Now $\theta\in Q_1(2q)$ forces $\theta_1\leq n+1$.
  Thus $|\gamma_1|\leq n$ and $|\gamma_n|\leq n$.
  Furthermore, it is easy to see from the definition of $\gamma_i$ that the largest possible value of $|\gamma_i|$ must occur when $i=1$ or $i=n$.
  We conclude that $|\gamma_i|\leq n$ for all $i$.
  Finally, these values must all be distinct and positive, thus forcing $\alpha = \rho$ as required.
  We note that $\alpha$ is just the sequence of $|\gamma_i|$ in decreasing order.
\end{proof}

\begin{remark}
  Lemma~\ref{l:s0case} actually calculates the cohomology of the exterior powers of the cotangent bundle on the isotropic Grassmannian.
  This can be obtained, for instance, using algebraic de Rham cohomology but we reprove it here combinatorially for completeness.
  Some cases of this result appear as exercises in \cite[Chapter~4]{Wey:CohomVect}.
\end{remark}

\subsection{Minimal equations in the two dimensional case}
We now obtain the minimal free resolution of the structure sheaf of the Lagrangian Kalman variety in the case when $\dim L= 2$.
We work with the ambient space $\sp(V)/\Sym^2 W$, see Remark \ref{r:sym2W=0}.

\begin{theorem}
  \label{t:dim-2-min-eq}
  If $\dim{L} = 2$, then the equations in Corollary~\ref{c:defn-eq:lagrangian} give the minimal prime ideal generators of $\IKal(1)$.
\end{theorem}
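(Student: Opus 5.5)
The strategy is to compare the ideal $I$ generated by the equations of Corollary~\ref{c:defn-eq:lagrangian} with the coordinate ring of the normalization, which we can compute explicitly. When $n = 1$ we have $\IGr(1, L) = \bP^1$. We work in $\sp(V)/\Sym^2 W$, as in Remark~\ref{r:sym2W=0}. After the linear equations $C_1 = 0$ are imposed, the remaining equations are $C_0 A_1 = 0$ and $C_0 A_0 A_1 = 0$. Here $C_0$ is $r \times 2$ and $A_1$ is $2 \times r$; the term $C_0A_0A_1$ has degree $3$. The symplectic constraints relate $C_0$ and $A_1$: the block $W \to L$ is determined up to sign by the block $L \to W^\ast$ through the form. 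Hence $C_0 A_1$ is really a symmetric (or skew) pairing of a single $r \times 2$ matrix $M$ with itself through the form on $L$. Concretely, writing $M = C_0$, we get $C_0 A_1 = \pm M J_L M^\intercal$. This is a skew $r\times r$ matrix whose entries are the $2\times 2$ minors of $M$, and these already cut out $\rank M \le 1$. Similarly, $C_0A_0A_1 = M J_L A_0' M^\intercal$ with $A_0 \in \sp(L) \cong \Sym^2 L$ gives the cubic equations.

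\textbf{Step 1: the normalization.} First I would compute $\bF_\bullet$ from Proposition~\ref{p:norm-ikal-res} on $\bP^1$. There $\xi = \Sym^2(\sR \oplus \sW) = \sO(-2) \oplus \sO(-1)\otimes W \oplus \Sym^2 W$, with $\Sym^2 W$ dropped after the reduction. Using Lemma~\ref{l:lag-xi-summands} and Bott on $\bP^1$ (which is elementary here: $\rH^1(\sO(-k)) = \Sym^{k-2}\bC^2$), I would read off $\bF_0$ and $\bF_1$. By the proposition on $\bF_0$, we have $\bF_0 = A \oplus A(-1)$, since $Q_{1,1}(2) = \{(2)\}$ and all higher $Q_{1,1}(2q)$ are empty. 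The extra generator in degree $1$ comes from $\rH^1(\bP^1, \sO(-2)) = \bC$. This shows $\widetilde\sO_1 \neq \sO_1$, consistent with Theorem~\ref{t:nonnormal}: the normalization is $\sO_1$ plus one extra module generator in degree $1$. The terms $\bF_1$ come from $\rH^0(\bigwedge^1\xi)$, which is zero, and from $\rH^1(\bigwedge^2 \xi)$. These give the quadrics $\bigwedge^2 W$ (the minors) and further relations.

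\textbf{Step 2: primality and reducedness of $I$.} I would show that $A/I$ is reduced and that $I$ is prime. The plan is to use the exact sequence $0 \to \sO_1 \to \widetilde\sO_1 \to \widetilde\sO_0(-1) \to 0$, which is the $n=1$ instance of Conjecture~\ref{conj:ikal-les}. Here I expect it to be provable directly: $\IKal(0)$ is a linear space (all of $C_0$ vanishes), so $\widetilde\sO_0 = \sO_0$. The extra generator of $\widetilde\sO_1$ maps onto it. I would identify $\sO_1$ as the kernel of $\widetilde\sO_1 \to \widetilde\sO_0(-1)$ and resolve it by a mapping cone of $\bF_\bullet$ with the Koszul complex of the linear space $\IKal(0)$ shifted by $1$. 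The degree-$0$ piece of the cone shows that $\sO_1$ is generated by $1$. The degree-$1$ piece gives the ideal generators, and after cancellation these are the quadrics $C_0A_1$ and the cubics $C_0A_0A_1$. Since $\IKal(1)$ is irreducible (Proposition~\ref{p:ikal-irrd}) and its ideal is the kernel of $A \to \widetilde\sO_1$, the ideal produced this way is prime.

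\textbf{Step 3: minimality.} The quadrics are independent linear combinations, and the cubics are not in the ideal generated by the quadrics. The second claim can be checked by a degree argument: the quadrics involve only $M$, while the cubics are linear in $A_0$, so no cubic lies in $(\text{quadrics})$ unless it does so in $A_0$-degree $0$, which is impossible. The main obstacle is Step 2, namely justifying that the mapping-cone differential is nonzero where needed. Where it does not fully cancel, the fallback is to compare Hilbert series of $A/I$ and of $\widetilde\sO_1 - \widetilde\sO_0(-1)$, and to prove reducedness via a Gröbner degeneration in Macaulay2 for small $r$, then argue by $\GL(W)$-equivariance for general $r$.
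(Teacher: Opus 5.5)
Your plan is the paper's proof in outline: resolve $\widetilde\sO_1$ on $\bP^1$, use $0\to\sO_1\to\widetilde\sO_1\to\sO_0(-1)\to 0$, and take the mapping cone with the Koszul resolution of $\sO_0(-1)$. The gap is that the two steps carrying the content are only asserted.

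\textbf{The exact sequence.} Saying ``the extra generator maps onto it'' does not identify $\widetilde\sO_1/\sO_1$. This quotient is the cokernel of the $A(-1)$-component of $\bF_1\to\bF_0$, i.e.\ of $(\bigwedge^2W\oplus L\otimes W)\otimes A(-2)\to A(-1)$, and three facts are needed:
\begin{enumerate}
\item The $\bigwedge^2W$-component vanishes, because $\bigwedge^2W$ does not occur among the linear forms on $\sp(V)/\Sym^2W$.
\item The $L\otimes W$-component is nonzero. Otherwise the quotient would be all of $A(-1)$, whereas it is supported on the proper subset $\IKal(0)$.
\item An equivariant map from $L\otimes W$ to the linear forms is unique up to scalar. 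Hence its image generates exactly the ideal of $\IKal(0)$, and the cokernel is $\sO_0(-1)$.
\end{enumerate}

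\textbf{The cancellation.} More seriously, ``after cancellation these are the quadrics and the cubics'' is the theorem itself. The first term of the cone is $\bF_1\oplus\bigwedge^2(L\otimes W)\otimes A(-3)$, and $\bigwedge^2(L\otimes W)=(\Sym^2L\otimes\bigwedge^2W)\oplus\Sym^2W$. The summand $\Sym^2L\otimes\bigwedge^2W\otimes A(-3)$ must cancel against its copy in $\bF_2=\rH^1(\bP^1,\bigwedge^3\xi)\otimes A(-3)$. If it does not, it gives extra minimal cubic generators, and the equations of Corollary~\ref{c:defn-eq:lagrangian} do not generate the prime ideal.

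The paper proves this cancellation as follows. The Koszul map $\Sym^{i+2}\sR\otimes\bigwedge^iW\to\Sym^{i+1}\sR\otimes\bigwedge^{i-1}W\otimes(\sR\otimes W)$ is injective with cokernel $\Sym^{i+2}\sR\otimes\bS_{(2,1^{i-2})}W$, which has no $\rH^0$ on $\bP^1$. So it is injective on $\rH^1$, and the differential $\Sym^iL\otimes\bigwedge^iW\otimes A(-i-1)\to\Sym^{i-1}L\otimes\bigwedge^{i-1}W\otimes A(-i)$ of $\bF_\bullet$ is nonzero. Commutativity of the comparison square then forces the comparison map on $\Sym^iL\otimes\bigwedge^iW$ to be nonzero. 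This runs by induction from $i=1$, where the comparison map is the identity on $L\otimes W\otimes A(-2)$.

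\textbf{The fallback.} Your fallback does not close this gap. A Gr\"obner computation for small $r$ plus ``$\GL(W)$-equivariance'' does not transfer to arbitrary $r$ without a separate inheritance argument. A Hilbert series comparison would need both the exact sequence and the Hilbert series of $A/I$, and you have neither.

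\textbf{Step 3.} Your bigrading argument is not valid as written. A cubic of $A_0$-degree $1$ could a priori equal $\sum a_kq_k$ with $a_k$ linear in the entries of $A_0$ and $q_k$ minors of $M$, so bigrading rules nothing out. The correct argument runs as follows:
\begin{enumerate}
\item The $2\times 2$ minors of $M$ generate the prime ideal of the rank $\le 1$ locus.
\item At $M=uv^\intercal$ the cubic matrix becomes $q(v)\,uu^\intercal$ up to sign, where $q$ is the binary quadratic form attached to $A_0\in\sp(L)\cong\Sym^2L$. This is nonzero for generic $A_0$, so the cubics do not lie in the ideal of the quadrics.
\item Schur's lemma applied to the irreducible $\Sym^2W$ then gives linear independence of all the cubics modulo that ideal.
\end{enumerate}
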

\begin{proof}
  From Theorem \ref{t:nonnormal}, the nonnormal locus of the Lagrangian Kalman variety is $\IKal(0)$ where
  \begin{equation*}
    \IKal(0)=\{\varphi \in \sp(V) \mid \varphi(W) \subseteq W \}
  \end{equation*}
  is an affine space whose structure sheaf $\O_0$ is resolved by the Koszul complex
  \begin{equation*}
    0
    \to \dots
    \to \bigwedge^2(L\otimes W) \otimes  A(-2)
    \to L \otimes W \otimes A(-1)
    \to A.
  \end{equation*}

  The resolution of the structure sheaf $\widetilde\O_1$ of the normalization of $\IKal(1)$ is obtained using the geometric technique of Theorem~\ref{t:geom-method} where the defining bundle is given by $\xi = \Sym^2(R) \oplus (R \otimes W)$.
  Note that we have
  \begin{equation*}
    \bigwedge^i\xi
    = \left[\Sym^i R \otimes \bigwedge^i W\right]  \oplus  \left[ \Sym^{i+1} R \otimes \bigwedge^{i-1} W \right]
    =
    \begin{array}{c}
      \Sym^i R \otimes \bigwedge^i W \\
      \Sym^{i+1} R \otimes \bigwedge^{i-1} W
    \end{array}.
  \end{equation*}
  To make the terms easier to read, we use the convention that stacks of modules refer to direct sums as shown above.
  When $i>1,$ $\rH^1(\IGr(1, L), \bigwedge^i\xi)$ is nonzero and all other cohomology groups vanish.
  The $(i - 1)$-st syzygy of $\widetilde\O_1$ is thus given by
  \begin{equation*}
    \rH^1(\IGr(1, L), \bigwedge^i\xi) =
    \begin{array}{c}
      \Sym^{i-2} L \otimes \bigwedge^i W \\
      \Sym^{i-1}L\otimes \bigwedge^{i-1}W
    \end{array}.
  \end{equation*}
  When $i = 1$, $\rH^1(\IGr(1, L), \xi) = \bC$.
  Thus, the sequence
  \begin{equation*}
    0
    \to
    \dots
    \to
    \begin{array}{c}
      \Sym^{i-2} L\otimes \bigwedge^i W\\
      \Sym^{i-1}L\otimes \bigwedge^{i-1}W
    \end{array}
    \otimes A(-i)
    \to \dots
    \to
    \begin{array}{c}
      A\\
      A(-1)
    \end{array}
    \to 0
  \end{equation*}
  resolves $\widetilde\O_1$ and, in particular, a presentation of $\widetilde\O_1$ is given by
  \begin{equation*}
    \begin{array}{c}
      \bigwedge^2 W \\
      L \otimes W
    \end{array}
    \otimes A(-2)
    \to \begin{array}{c}
      A\\
      A(-1)
    \end{array} \to 0.
  \end{equation*}
  Since the degree $1$ part of $A$ is $\sp(V)/\Sym^2 W$ and does not contain $\bigwedge^2 W$, the map $\bigwedge^2 W\otimes A(-2)\to A(-1)$ is zero.
  Thus, there is an exact sequence
  \begin{equation*}
    L \otimes W \otimes A(-2) \to A(-1) \to \widetilde\O_1/ \O_1.
  \end{equation*}
  The map $L \otimes W \otimes A(-2) \to A(-1)$ has to be nonzero because otherwise the quotient would be supported everywhere, which is a contradiction as the nonnormal locus is a proper subset.
  Since there is a unique such equivariant nonzero map, we see that $\widetilde\O_1/\O_1\cong \O_0(-1)$.
  In other words, we have an exact sequence
  \begin{equation}
    \label{eq:dim-2-ses}
    0\to \O_1\to \widetilde\O_1 \to \O_0(-1)\to 0.
  \end{equation}
  Using this and the resolutions for $\widetilde\O_1$ and $\O_0(-1)$ we obtain a resolution for $\O_1$ by taking the mapping cone.
  The minimal resolution is obtained after observing that the following cancellations occur in the $i$-th syzygies of the bundles
  \begin{equation}
    \label{eq:tempap}
    \Sym^i L \otimes \bigwedge^i W \otimes A(-i-1)\to \Sym^i L \otimes \bigwedge^i W\otimes A(-i-1).
  \end{equation}
  The fact that these maps between the $i$-th syzygies of $\widetilde\O_1$ and $\O_0(-1)$ are nonzero can be checked inductively.
  We know this holds when $i=1$, so let $i>1$.
  We claim that the map
  \begin{equation}\label{eq:tempap2}
    \Sym^iL \otimes \bigwedge^i W \otimes A(-i-1) \to \Sym^{i-1}L \otimes \bigwedge^{i-1} W \otimes A(-i)
  \end{equation}
  in the minimal free resolution of $\widetilde\O_1$ is nonzero.
  The map
  \begin{equation*}
    \Sym^{i+2} R\otimes \bigwedge^i W\to \Sym^{i+1} R\otimes \bigwedge^{i-1} W\otimes (R\otimes W)
  \end{equation*}
  between bundles in the Koszul complex on $\xi$ is the Koszul map and is therefore injective.
  Let $M$ denote its cokernel.
  We have $M= \Sym^{i+2} R \otimes \bS_{(2,1^{i-2})}W$ and hence using Borel--Weil--Bott we see that $\rH^0(\IGr(1, L), M) = 0$.
  From the long-exact sequence in cohomology,
  \begin{multline*}
    \dots
    \to \rH^0(\IGr(1, L), M)
    \to \rH^1(\IGr(1, L), \Sym^{i+2} R\otimes \bigwedge^i W) \\
    \to \rH^1(\IGr(1, L), \Sym^{i+1} R\otimes \bigwedge^{i-1} W\otimes (R\otimes W))
    \to \dots
  \end{multline*}
  we see that the required map is injective and, in particular, nonzero.

  From the induction hypothesis, the map in \eqref{eq:tempap} is nonzero in the $(i - 1)$-st case.
  Since the map in \eqref{eq:tempap2} is nonzero, this forces the map in \eqref{eq:tempap} in the $i$-th case to be nonzero.
  This completes the induction argument.

  Looking at the first syzygy, we see that the minimal equations of $\IKal(1)$ are given by the representations $\bigwedge^2 W\oplus \Sym^2 W$ where the $\bigwedge^2 W$ part is quadratic and $\Sym^2 W$ part is cubic.
  These representations correspond to the equations coming from $C_0 A_1 = 0$ and ${C_0}{A_0}{A_1}=0$ from Corollary~\ref{c:defn-eq:lagrangian}.
  We conclude that in this case the defining equations are minimal.
\end{proof}

\section{Exact sequence of structure sheaves}
\label{sec:exact-sequence}

In this section, generalizing \eqref{eq:dim-2-ses}, we present a conjecture that implies the existence of a long exact sequence between the structure sheaves of Kalman varieties and their normalizations.
We present some results supporting the validity of the conjecture and outline general heuristics that show the existence of the long exact sequence from the stated conjecture.
The type A analogue of this conjecture was presented in \cite[Conjecture~3.1]{sam2012:equations} and proven in  \cite[Theorem~3.1]{hua2020:equations}.

We are motivated by the observation in Theorem~\ref{t:nonnormal} that
\begin{equation*}
  \nnormal{\IKal(m)} = \IKal(m - 1)
\end{equation*}
and so using the methods in \cite{hua2020:equations}, we can try to relate the structure sheaves of isotropic Kalman varieties and their normalizations to other isotropic Kalman varieties.

In type A, the proof of this result relies on the semisimplicity of the bundle $\xi$ defining the Kalman variety.
The main obstruction in adapting the methods of \cite{hua2020:equations} is the failure of this assumption in type C, see \eqref{eq:iso-cotangent-ses-restr}.

Fix $V$, $L$ and $W$ and denote the structure sheaves and normalizations of $\IKal(m)$ by $\O_{m}$  and $\widetilde\O_m$ respectively.
The resolution of $\widetilde\O_m$ will be denoted $\bF_\bullet^{(m)}$.
If there is ambiguity, we distinguish the defining bundles $\xi$ of $\IKal(m)$ by $\xi^{(m)}$ and the associated Koszul complex on $\IGr(m, L)$ by $\bK^{(m)}_\bullet = \bigwedge^\bullet \xi^{(m)}$.
Now consider the commutative diamond
\begin{equation}
  \begin{tikzcd}[column sep=tiny, row sep=large]
    &
    \IFlag(m - 1, m, L)
    \ar["\pi_1", end anchor=north east]{dl}
    \ar["\pi_2", end anchor=north west, swap]{dr}
    & \\
    \IGr(m, L)
    \ar["p_1", start anchor=south east, swap]{dr}
    & &
    \IGr(m - 1, L)
    \ar["p_1", start anchor=south west]{dl}
    \\
    & \Spec(\bC) &
  \end{tikzcd}.
\end{equation}
Denote the rank $m - 1$ and rank $m$ tautological subbundles over $\IFlag(m - 1, m, L)$ by $\sR_{m - 1}$ and $\sR_m$ respectively; that is, above an isotropic flag $R_{m - 1} \subseteq R_m \subseteq L$, the fibers of $\sR_{m - 1}$ and $\sR$ are isomorphic to $R_{m - 1}$ and $R_m$ respectively. Let $s=2(n-m)+1$.

\begin{conjecture}
  \label{conj:ikal-submod}
  There exists a complex $\bG_\bullet$ of homogeneous vector bundles on $\IFlag(m - 1, m, L)$ such that the pushforward along $\pi_1$ can be identified with a quotient complex
  \begin{equation*}
    \bK_\bullet^{(m)} \surjects (\pi_1)_\ast \bG_\bullet.
  \end{equation*}
  and the pushforward along $\pi_2$ with a subcomplex
  \begin{align*}
    \rR^{s}(\pi_2)_\ast  \bG_\bullet \injects \bK^{(m - 1)}_\bullet[-(n - m)^2].
  \end{align*}
\end{conjecture}

Since terms of $\bF_\bullet^{(m)}$ are obtained by computing cohomology of the terms of $\bK_\bullet^{(m)}$, and we have $p_1\circ \pi_1=p_2\circ \pi_2$, taking pushforwards along the morphisms $p_i$ result in morphisms
\begin{equation*}
  \bF_\bullet^{(m)}
  \surjects \rH^\bullet(\IGr(m, L), (\pi_1)_\ast\bG_\bullet)
  = \rH^\bullet(\IGr(m - 1, L), \rR^{s}(\pi_2)_\ast \bG_\bullet)
  \injects \bF_\bullet^{(m - 1)}[- (n - m)^2]
\end{equation*}
of complexes.
Our main conjecture predicts that these morphisms in fact extend to a long exact sequence.

Given partitions $\lambda'=(\lambda'_1,\dots, \lambda'_{m-1})$ and $\mu=(\mu_1,\dots, \mu_{n-m})$ and a positive integer $k\geq \max(\lambda_1',\mu_1+s+1)$, define new partitions $\lambda = (k, \lambda'_1, \dots, \lambda'_{m - 1})$ and  $\mu' =  (k - s-1, \mu_1, \dots, \mu_{n - m}).$

\begin{conjecture}
  \label{conj:ikal-les}
  There exist maps $\Psi_m \colon \widetilde\O_m \to \widetilde\O_{m - 1}(-s)$ that, for some choices of $\alpha$, map the summands $\rH^\bullet(\IGr(m, L), \bS_\lambda \sR \otimes \bS_{[\mu]}(\R^\perp / \sR)) \otimes \bS_\alpha \sW$ to $\rH^{\bullet}(\IGr(m, L), \bS_{\lambda'} \sR \otimes \bS_{[\mu']}(\R^\perp / \sR)) \otimes \bS_\alpha \sW$ of the resolutions $\bF^{(m)}_\bullet$ and $\bF^{(m-1)}_\bullet.$
  All other summands of $\bF^{(m)}$ are sent to zero under $\Psi_m$.
  Furthermore, the morphisms $\Psi_m$ induce a long exact sequence
  \begin{equation*}
    0 \to
    \O_n \to
    \widetilde\O_n \to
    \widetilde\O_{n - 1}(-1) \to
    \dots \to
    \widetilde\O_m(-(n - m)^2) \to
    \dots \to
    \widetilde\O_0(-n^2) \to
    0.
  \end{equation*}
\end{conjecture}

The description of the morphisms in Conjecture \ref{conj:ikal-les} is motivated by the combinatorics of the homogeneous vector bundles over $\IFlag(m-1,m,L)$ and relative Borel--Weil--Bott. The next proposition elaborates on this.

\begin{proposition}
  \label{p:g-summands}
  Consider the homogeneous bundle $\sG = \sG(\lambda', k, \mu, \alpha)$ on $\IFlag(m - 1, m, L)$ defined by
  \begin{equation*}
    \sG(\lambda', k, \mu, \alpha) =
    \bS_{\lambda'} \sR_{m - 1} \otimes
    (\sR_{m} / \sR_{m - 1})^k \otimes
    \bS_{[\mu]}(\sR_m^\perp / \sR_m) \otimes
    \bS_{\alpha} \sW.
  \end{equation*}

  The following hold for the pushwards of $\sG$ along $\pi_i$:
  \begin{enumerate}
  \item
    Let $\sR$ be the tautological subbundle on $\IGr(m, L)$.
    Then
    \begin{equation*}
      (\pi_1)_\ast  \sG(\lambda', k, \mu, \alpha)
      = \bS_{\lambda} \sR \otimes \bS_{[\mu]} (\sR^\perp / \sR) \otimes \bS_\alpha \sW.
    \end{equation*}

  \item
    Let $\sR$ be the tautological subbundle on $\IGr(m - 1, L)$.
    Then
    \begin{equation*}
      \rR^{s}(\pi_2)_\ast  \sG(\lambda', k, \mu, \alpha)
      = \bS_{\lambda'} \sR \otimes \bS_{[\mu']} (\sR^\perp / \sR) \otimes \bS_\alpha \sW.
    \end{equation*}
  \end{enumerate}

\end{proposition}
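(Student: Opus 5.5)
The plan is to compute both pushforwards fiberwise. In each case the factors of $\sG$ that are pulled back from the base come out by the projection formula, and the remaining factor is handled by a relative Borel--Weil--Bott computation: Theorem~\ref{t:bwb:rel} for $\pi_1$ and a relative form of Theorem~\ref{t:bwb:iso} for $\pi_2$. In both cases every fiber is a homogeneous variety and the bundle is homogeneous, so fiber cohomology has constant dimension. Grauert's theorem (cohomology and base change) then identifies $\rR^i(\pi_j)_\ast$ with the bundle whose fibers are the fiberwise cohomology groups, and BWB identifies that bundle equivariantly.

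\emph{Part (1).} Over a point $R_m\in\IGr(m,L)$, every hyperplane of $R_m$ is isotropic. Hence $\pi_1$ is the relative Grassmannian $\Flag_{\IGr(m,L)}(m-1;\sR)=\Gr(m-1,\sR)$, with tautological subbundle $\sR_{m-1}$ and quotient line $\sR_m/\sR_{m-1}$. The bundles $\bS_{[\mu]}(\sR_m^\perp/\sR_m)$ and $\bS_\alpha\sW$ are pulled back from $\IGr(m,L)$, so by the projection formula it suffices to compute $(\pi_1)_\ast$ of $\bS_{\lambda'}\sR_{m-1}\otimes(\sR_m/\sR_{m-1})^k$. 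This is a weight with blocks $\lambda'$ and $(k)$. After arranging the concatenation in the order dictated by the conventions of Theorem~\ref{t:bwb:rel} (with $\bS_\lambda$ as in this paper), the hypothesis $k\ge\lambda'_1$ makes $(k,\lambda'_1,\dots,\lambda'_{m-1})=\lambda$ dominant. Thus $\sigma=\mathrm{id}$, all higher direct images vanish, and $(\pi_1)_\ast=\bS_\lambda\sR$. The step needing care is this convention check: that the quotient line carries the first entry, i.e.\ that sections of $\sO(k)$ on $\bP(R_m^\ast)$ give $\Sym^k$ and that the block order is the one for which $k\ge\lambda'_1$ is the dominance condition. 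I would verify it on $m=2$, $\lambda'=0$.

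\emph{Part (2).} Over $R_{m-1}\in\IGr(m-1,L)$, set $E=R_{m-1}^\perp/R_{m-1}$, a symplectic space of dimension $2(n-m+1)$. The fiber of $\pi_2$ is the space of lines $\ell\subseteq E$, all of which are isotropic, so the fiber is $\IGr(1,E)=\bP(E)$ of dimension $2(n-m)+1=s$. Fiberwise, $\sR_m/\sR_{m-1}=\ell$ is the tautological line and $\sR_m^\perp/\sR_m=\ell^\perp/\ell$. The factors $\bS_{\lambda'}\sR_{m-1}$ and $\bS_\alpha\sW$ come out by the projection formula, and we need $\rH^\bullet(\IGr(1,E),\ell^{k}\otimes\bS_{[\mu]}(\ell^\perp/\ell))$.

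Apply Theorem~\ref{t:bwb:iso} with rank $n-m+1$, $\lambda=(k)$, so that $\gamma=(-k,\mu_1,\dots,\mu_{n-m})$ and $\rho=(n-m+1,\dots,1)$. Then
\[
\gamma+\rho=(n-m+1-k,\ \mu_1+n-m,\ \dots,\ \mu_{n-m}+1).
\]
Since $k\ge\mu_1+s+1$, the first entry is negative, and its absolute value $k-(n-m+1)\ge\mu_1+n-m+1$ strictly exceeds all the other (positive, distinct) entries. So a unique $\sigma$ exists: flip the sign of the first entry and keep it in front. This gives
\[
\sigma\bullet\gamma=(k-2(n-m)-2,\ \mu_1,\dots,\mu_{n-m})=(k-s-1,\mu)=\mu'.
\]
For the length, use \eqref{def:len-type-c} on the reversed tuple. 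There is one negative entry. It forms a negative-sum pair with each of the $n-m$ other entries, contributing $n-m$. It also produces $n-m$ inversions, since it is larger than every other entry after reversal. The total is $\ell(\sigma)=1+2(n-m)=s$, giving $\rR^s(\pi_2)_\ast=\bS_{[\mu']}(\sR^\perp/\sR)\otimes\bS_{\lambda'}\sR\otimes\bS_\alpha\sW$ with all other direct images zero.

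The main obstacle is bookkeeping rather than substance. One must confirm the reversal convention in the definitions of $\Inv$, $\Neg$ and $\Nsp$, so that the $n-m$ inversions are indeed counted. One must also justify the relative version of Theorem~\ref{t:bwb:iso}, which follows from the homogeneity and base-change argument above.
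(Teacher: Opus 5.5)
Your proposal is correct and follows the paper's proof essentially step for step: you split off the pulled-back factors by the projection formula, treat $\pi_1$ as the relative Grassmannian $\Gr(m-1,\sR)$ with relative Borel--Weil--Bott, and treat $\pi_2$ as the relative $\IGr(1,\sR^\perp/\sR)$. You use the same weight $\gamma+\rho=(n-m+1-k,\mu_1+n-m,\dots,\mu_{n-m}+1)$ and the same length count $1+(n-m)+(n-m)=s$, and you also write out $\sigma\bullet\gamma=\mu'$ explicitly, which the paper leaves implicit.

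One wording fix: after reversal the negative entry sits last and is the \emph{smallest} entry, not the largest, and that is why each of the $n-m$ preceding entries forms an inversion with it. Your convention check in part (1) is worth doing, because the condition $k\ge\lambda'_1$ is the dominance condition only when the quotient line $\sR_m/\sR_{m-1}$ carries the first entry of the concatenated weight. Your $m=2$ test confirms that it does.
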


\begin{proof}
  First let $\sR$ be the tautological subbundle on $\IGr(m, L)$.
  Since $\sW$ is a trivial vector bundle $(\pi_1)_\ast \bS_\alpha\sW = \bS_\alpha \sW$.
  As $(\pi_1)_\ast (\sR_m^\perp / \sR_m) = (\sR^\perp / \sR)$, $(\pi_1)_\ast \bS_{[\mu]}(\sR_m^\perp / \sR_m) = \bS_{[\mu]}(\sR^\perp / \sR)$.
  For the rest of the terms of $\sG$, identify $\IFlag(m - 1, m, L)$ with the relative Grassmannian $\Gr(m - 1, \sR)$ such that $\pi_1$  plays the role of the structure morphism.
  Under this identification, $\widetilde\sR = \sR_{m - 1}$  and $\widetilde\sQ = \sR_m / \sR_{m - 1}$ where $\widetilde\sR$ and $\widetilde\sQ$ are tautological sub- and quotient bundles on $\Gr(m - 1, \sR)$.
  Since $k \geq \lambda_1'$, using the relative version of Borel--Weil--Bott (Theorem~\ref{t:bwb:rel}), we obtain
  \begin{equation*}
    \rR^i (\pi_1)_\ast \left(\bS_\lambda \sR_{m - 1} \otimes (\sR_m / \sR_{m - 1})^k \right)
    = \rR^i (\pi_1)_\ast\left( \bS_\lambda \widetilde\sR \otimes \bS_{(k)} \widetilde\sQ \right)
    = \bS_{(k, \lambda_1, \dots, \lambda_{m - 1})} \sR
  \end{equation*}
  if and only if $i = 0$.
  So $(\pi_1)_\ast \sG = \bS_{(k, \lambda_1, \lambda_2, \dots, \lambda_{m - 1})} \sR \otimes \bS_{[\mu]} (\sR^\perp / \sR) \otimes \bS_\alpha \sW$ and all higher direct images $\rR^i(\pi_1)_\ast \sG$ vanish for $i \neq 0$.

  For the second claim let $\sR$ denote the tautological subbundle on $\IGr(m - 1, L)$.
  Note that any flag in $\IFlag(m - 1, m, L)$ is determined by picking an $m - 1$ dimensional isotropic subspace $R \subseteq L$ and a one dimensional subspace in $R^\perp / R$. We identify $\IFlag(m - 1, m, L)$ with the relative isotropic Grassmannian $\IGr(1, \sR^\perp / \sR)$ with structure morphism $\pi_2$.
  If $\widetilde\sR$ is the tautological subbundle on $\IGr(1, \sR^\perp / \sR)$, we have identifications $\widetilde\sR = \sR_m / \sR_{m - 1}$ and $\widetilde\sR^\perp / \widetilde\sR = \sR_m^\perp / \sR_m$.
  We are interested in the length \eqref{def:len-type-c} of the signed permutation that sorts
  \begin{align*}
    \gamma + \rho
    &=  (-k, \mu_1, \dots, \mu_{n - m}) + (n - m + 1, \dots, 1) \\
    &=  (n - m + 1 -k, n - m + \mu_1, \dots, 1 + \mu_{n - m})
  \end{align*}
  in descending order.
  As $k \geq \mu_1 + s + 1 = \mu_1 + 2(n - m + 1)$,
  \begin{equation*}
    n - m + 1 - k \leq - \mu_1 - (n - m + 1) \leq 0.
  \end{equation*}

  Since all other entries of $\gamma + \rho$ are nonnegative, $|\Neg(\gamma + \rho)| = 1$.
  Since the last $n - m$ entries of $\gamma + \rho$ form a decreasing sequence of nonnegative integers while the first entry is negative, $|\Inv(\gamma + \rho)| = n - m$.
  Finally, for any $j = 2, \dots, n - m + 1$,
  \begin{equation*}
    (n - m + 1 - k) + (n - m - j + 2 + \mu_{j - 1})
    = [2(n - m + 1) + \mu_{j - 1} - k] + (1 - j)
    < 0
  \end{equation*}
  and so $|\Nsp(\gamma + \rho)| = n - m$.
  We conclude that the signed permutation sorting $\gamma + \rho$ in descending order has length $s = 2(n - m) + 1$.
  Theorem \ref{t:bwb:iso} and Theorem \ref{t:bwb:rel} now imply $i = s$ if and only if
  \begin{multline*}
    \rR^i (\pi_2)_\ast \left( (\sR_m / \sR_{m - 1})^k \otimes \bS_{[\mu]} (\sR_m^\perp / \sR_m) \right) \\
    = \rR^i (\pi_2)_\ast \left(\bS_{(k)} \widetilde\sR \otimes \bS_{[\mu]} (\widetilde\sR^\perp / \widetilde\sR) \right)
    = \bS_{[k - s - 1, \mu_1, \dots, \mu_{n - m}]} (\sR^\perp / \sR).
  \end{multline*}
  Using an argument similar to proof of the first claim on the remaining factors, we see that $\rR^s (\pi_2)_\ast \sG$ is nonzero and identical to the stated bundle.
\end{proof}

\begin{figure}[b]
  \begin{equation*}
    \begin{tikzcd}[column sep=tiny, row sep=small]
  0 \ar{r} & \O_2 \ar{r} &
  \tilde \O_2 \ar["\Psi_2"]{r}
  &
  \tilde \O_1  \ar["\Psi_1"]{r}
  &
  \tilde \O_0  \ar{r}
  &
  0
  \\
  &&
  \boxed{\substack{\Hxi{0}{0}{0;0;0}}}
  \oplus
  {\color{Blue}\substack{\Hxi{1}{1}{2;0;0} \\ \Hxi{2}{2}{3,1;0;0} \\ \Hxi{3}{3}{3,3;0;0}}}
  \ar[Blue]{r}
  \ar{u}
  &
  {\color{Blue}\substack{\Hxi{0}{0}{0;0;0} \\ \Hxi{1}{1}{1;1;0}  \\ \Hxi{2}{2}{3;1;0}}}
  \oplus
  {\color{BrickRed}\substack{\Hxi{3}{3}{4;0;0}}}
  \ar[BrickRed]{r}
  \ar{u}
  &
  {\color{BrickRed}\substack{\Hxi{0}{0}{0;0;0}}}
  \ar{u}
  &
  \\
  &&
  \boxed{\substack{\Hxi{2}{3}{3,1;0;2}}}
  \oplus
  {\color{Blue}\substack{\Hxi{2}{3}{4,1;0;1} \\ \Hxi{3}{4}{4,3;0;1}}}
  \ar{u}
  \ar[Blue]{r}
  &
  {\color{Blue}\substack{\Hxi{1}{2}{1;2;1} \\ \Hxi{2}{3}{3;2;1}}}
  \oplus
  \substack{\Hxi{1}{2}{1;1;2} \\ \Hxi{2}{3}{3;1;2} \\ \Hxi{3}{4}{4;0;2}}
  \oplus
  {\color{BrickRed}\substack{\Hxi{3}{4}{5;0;1}}}
  \ar{u}
  \ar[BrickRed]{r}
  &
  {\color{BrickRed}\substack{\Hxi{0}{1}{0;1;1}}} 
  \ar{u}
  &
  \\
  &&
  {\color{Blue}\substack{\Hxi{3}{5}{4,4;0;2}}}
  \ar{u}
  \ar[Blue]{r}
  &
  {\color{Blue}\substack{\Hxi{2}{4}{4;2;2}}} 
  \oplus
  \substack{
    \substack{\Hxi{0}{2}{0;0;2} \\ \Hxi{1}{3}{1;1;2} \\ \Hxi{2}{4}{3;1;2}} \\
    \boxed{\substack{\Hxi{2}{4}{3;1;2}}}
  }
  \oplus
  {\color{BrickRed}\substack{\Hxi{3}{5}{5;1;2} \\ \Hxi{3}{5}{4;0;2}}}
  \ar[BrickRed]{r}
  \ar{u}
  &
  {\color{BrickRed}\substack{\Hxi{0}{2}{0;1,1;2} \\ \Hxi{0}{2}{0;0;2} }}
  \ar{u}
  \\
  &&
  \substack{0}
  \ar{u}
  &
  {\color{BrickRed}\substack{\Hxi{3}{6}{5;0;3}} \ar{u}}
  \ar[BrickRed]{r}
  &
  {\color{BrickRed}\substack{\Hxi{0}{3}{0;1;3}} \ar{u}}
  \\
  &&
  &
  \substack{0} \ar{u}
  &
  \boxed{\substack{\Hxi{0}{4}{0;0;4}}}
  \ar{u}
  \\
  &&
  &
  &
  \substack{0} \ar{u}
\end{tikzcd}
%%% Local Variables:
%%% mode: LaTeX
%%% TeX-master: "../2025-gkp-isotropic-kalman"
%%% End:
  \end{equation*}
  \caption{\label{fig:dimL4-xi}}
\end{figure}

\begin{example}[$\dim{L} = 4, r = 1$]
  We present some calculations in the case $\dim{L} = 4$, $r = 1$ in support of the conjecture above. We work with the ambient space $\sp(V)/\Sym^2 W$, see Remark \ref{r:sym2W=0}.

  Using the description of the terms of $\bigwedge^\bullet \xi$ given in Lemma~\ref{l:xi-summands} and of the resolution of the normalizations from Proposition~\ref{p:norm-ikal-res}, we obtain the diagram in Figure~\ref{fig:dimL4-xi} where $\Hxi{i}{j}{\lambda;\mu;\alpha}$ denotes the cohomology group $\rH^i(\IGr(m, L), \bS_\lambda \sR \otimes \bS_{[\mu]}(\sR^\perp/ \sR) \otimes \bS_\alpha \sW)$ appearing in $\rH^i(\IGr(m, L), \bigwedge^j \xi)$.
  Now, the morphisms $\Psi_i$ described in Conjecture~\ref{conj:ikal-les} map the rightmost terms in each column match to the leftmost terms of the next column; for example $\Psi_2$ identifies the blue terms and $\Psi_1$ the red terms.
  After computing the cohomology groups using Borel--Weil--Bott (Theorem~\ref{t:bwb:iso}) and tracking the degree shifts, we get the diagram in Figure~\ref{fig:dimL4} where $(\lambda;\alpha)(-k)$ denotes $\bS_{[\lambda]} L \otimes \bS_\alpha W \otimes A(-k)$.

  \begin{figure}[b]
    \begin{equation*}
      \begin{tikzcd}[column sep=tiny, row sep=small]
  0 \ar{r} & \O_2 \ar{r} &
  \tilde \O_2 \ar{r}
  &
  \tilde \O_1 (-1) \ar{r}
  &
  \tilde \O_0 (-4) \ar{r}
  &
  0
  \\
  &&
  \boxed{\substack{A}}
  \oplus
  {\color{Blue}\substack{A(-1) \\ A(-2) \\ A(-3)}}
  \ar[Blue]{r}
  \ar{u}
  &
  {\color{Blue}\substack{A(-1) \\ A(-2)  \\ A(-3)}}
  \oplus
  {\color{BrickRed}\substack{A(-4)}}
  \ar[BrickRed]{r}
  \ar{u}
  &
  {\color{BrickRed}\substack{A(-4)}}
  \ar{u}
  &
  \\
  &&
  \boxed{\substack{\WA{2}{3}}}
  \oplus
  {\color{Blue}\substack{\LWA{1}{1}{3} \\ \LWA{1}{1}{4}}}
  \ar{u}
  \ar[Blue]{r}
  &
  {\color{Blue}\substack{\LWA{1}{1}{3} \\ \LWA{1}{1}{4}}}
  \oplus
  \substack{\WA{2}{3} \\ \WA{2}{4} \\ \WA{2}{5}}
  \oplus
  {\color{BrickRed}\substack{\LWA{1}{1}{5}}}
  \ar{u}
  \ar[BrickRed]{r}
  &
  {\color{BrickRed}\substack{\LWA{1}{1}{5}}}
  \ar{u}
  &
  \\
  &&
  {\color{Blue}\substack{\LWA{1, 1}{2}{5}} \ar{u}}
  \ar[Blue]{r}
  &
  {\color{Blue}\substack{\LWA{1, 1}{2}{5}} }
  \oplus
  \substack{
    \substack{\WA{2}{3} \\ \WA{2}{4} \\ \WA{2}{5}} \\
    \boxed{\substack{\WA{2}{5}}}
  }
  \oplus
  {\color{BrickRed}\substack{\LWA{1, 1}{2}{6} \\ \WA{2}{6} }}
  \ar[BrickRed]{r}
  \ar{u}
  &
  {\color{BrickRed}\substack{\LWA{1, 1}{2}{6} \\ \WA{2}{6} }}
  \ar{u}
  \\
  &&
  \substack{0}
  \ar{u}
  &
  {\color{BrickRed}\substack{\LWA{1}{3}{7}} \ar{u}}
  \ar[BrickRed]{r}
  &
  {\color{BrickRed}\substack{\LWA{1}{3}{7}} \ar{u}}
  \\
  &&
  &
  \substack{0} \ar{u}
  &
  \boxed{\substack{\WA{4}{8}}}
  \ar{u}
  \\
  &&
  &
  &
  \substack{0} \ar{u}
\end{tikzcd}
%%% Local Variables:
%%% mode: LaTeX
%%% TeX-master: "../2025-gkp-isotropic-kalman"
%%% End:
.
    \end{equation*}
    \caption{\label{fig:dimL4}}
  \end{figure}

  Since we are working with the associated graded, the resolution of $\widetilde \sO_1$, as given above, is not minimal.
  Using the defining equations from Theorem~\ref{t:defn-eq} and explicit calculations in Macaulay2 \cite{gs:macaulay2}, we can show that the maps
  \begin{equation*}
    \begin{array}{ccc}
      \WA{0}{2} \\ \WA{0}{3} \\ \WA{0}{4}
    \end{array}
    \to
    \begin{array}{ccc}
      \WA{0}{2} \\ \WA{0}{3} \\ \WA{0}{4}
    \end{array}
  \end{equation*}
  in the resolution of $\widetilde\sO_1$ are given by the identity on each of the components and hence do not appear in the minimal free resolution.
  If $M$ is the submodule of $\widetilde \sO_1$ generated by $A(-1)$, $A(-2)$, $A(-3),$ then the proof in the $\dim{L} = 2$ case can adapted to show the sequence
  \begin{equation*}
    0 \to M \to \widetilde \sO_1 \to \widetilde \sO_0(-3) \to 0
  \end{equation*}
  is exact.
  
  Conjecture~\ref{conj:ikal-les} implies the exactness of the sequence
  \begin{equation*}
    0 \to \sO_2 \to \widetilde \sO_2 \to M(-1) \to 0
  \end{equation*}
  and that the boxed terms contribute to the syzygies of $\sO_2$ resulting in the resolution
  \begin{equation*}
    0\to \Sym^4W \otimes A(-8)\to \Sym^2 W \otimes A(-3)\oplus \Sym^2 W \otimes A(-5) \to A.
  \end{equation*}
  
  We note that the cubics and quintics in the first syzygy correspond to the polynomials $C_0A_0A_1=0$ and $C_0A_0^3A_1=0$ from Corollary~\ref{c:defn-eq:lagrangian}. A simple calculation reveals that $C_0A_1=0$ and $C_0A_0^2A_1=0$ hold (as polynomials). Thus, the conjecture would imply that the set-theoretic equations from Corollary~\ref{c:defn-eq:lagrangian} are in fact the minimal generators of the radical ideal in this case.
\end{example}

\section{Orthogonal Kalman varieties}
\label{sec:ortho}
We now consider the case where $V$ is an orthogonal vector space of dimension $2(n + r) + 1$ (type B) or $2(n + r)$ (type D), that is, $V$ is equipped with a nondegenerate symmetric bilinear form.
In this section, all complements will be with respect to this form and isotropic and coisotropic subspaces will be defined with respect to the orthogonal form.
The orthogonal Grassmannian $\OGr(m, V)$ consists of all isotropic subspaces of dimension $m$; the space $\OGr(m, V)$ is a projective variety but is not irreducible in general.
We will denote the orthogonal Lie algebra by $\so(V)$.

As before, fixing $W \subseteq V$ isotropic of dimension $r$ and $L$ an orthogonal subspace such that $W^\perp = L\oplus W$, we define the orthogonal Kalman variety to be the algebraic subset of $\so(V)$ defined by 
\begin{equation*}
  \OKal(m) = \left\{ \varphi \in \so(V) \;\vert\; \exists U \in \OGr(m+r, V), W\subseteq U, \varphi(U) \subseteq U \right\}.
\end{equation*}
Most of the proofs from the rest of the paper can be adapted, \emph{mutatis mutandis}, to this case using results for type B ($\dim{V}$ odd) or type D ($\dim{V}$ even) Lie algebras in place of the results for type C.
Hence, we only state the main results here and highlight any notable differences from type C.

As in the type A and type C cases, the orthogonal Kalman varieties can be obtained by restricting the cotangent bundle using the framework of \S\ref{sec:restriction-defn}.
If $Y = \OGr(m + r, V)$, then the cotangent bundle $\Omega_Y$ appears in a short exact sequence
\begin{equation}
  \label{eq:ortho-cotangent-ses}
  0 \to \bigwedge^2\R_Y \to \Omega_Y \to \R_Y \otimes (\R_Y^\perp / \R_Y)^\ast \to 0
\end{equation}
analogous to \eqref{eq:iso-cotangent-ses}.
The pullback to $\OGr(m, L)$ along the inclusion then results in an analogue of \eqref{eq:iso-cotangent-ses-restr}.
The resulting set theoretic description follows from an analogue of Proposition~\ref{p:ikal-set-theo}.

\subsection{Odd orthogonal Kalman varieties}
In type B, $\dim{V} = 2(n + r) + 1$, and $\OGr(n + r, V)$ and $\OKal(m)$ are both irreducible.
The set theoretic generators are obtained by the same conditions as those stated in Theorem~\ref{t:defn-eq}.
The resulting polynomials are, however, different as we decompose a generic $\varphi \in \mathfrak{so}(V)$ in terms of an orthogonal basis of $V$ \cite[\S1.2, pp.~2--3]{Hum:IntroductionLie}.
The codimension of $\OKal(m)$ in $\so(V)$ is given by 
\begin{equation*}
  \codim_{\so(V)} \OKal(m) = (2n + 1 - m)r + {r \choose 2}
\end{equation*}
following Proposition~\ref{p:codim}.
As in Theorem~\ref{t:nonnormal}, we have
\begin{equation*}
  \nnormal(\OKal(m)) = \sing(\OKal(m)) = \OKal(m - 1).
\end{equation*}
Finally, conjecturally there is a long exact sequence
\begin{equation*}
  0 \to
  \O_n \to
  \widetilde\O_n \to
  \widetilde\O_{n - 1}(-1) \to
  \dots \to
  \widetilde\O_m(-(n - m)^2) \to
  \dots \to
  \widetilde\O_0(- n^2) \to
  0
\end{equation*}
involving structure sheaves $\O_m$ of $\OKal(m)$ and their normalizations $\widetilde \O_m$ (cf. Conjecture~\ref{conj:ikal-les}).

\subsection{Even orthogonal Kalman varieties}
In type $D$, $\dim{V} = 2(n + r)$.
When $m < n$, $\OKal(m)$  is irreducible, $\codim_{\so(V)} \OKal(m) = (2n + 1 - m) r + {r \choose 2}$, and the nonnormal and singular loci coincide and we have $\nnormal(\OKal(m)) = \OKal(m - 1)$.
The defining equations in this case are analogous to the odd orthogonal case.

If $m = n$, $\OGr(n + r, V)$ is reducible and has two irreducible components $\OGr^+(n + r, V)$ and $\OGr^-(n + r, V)$ that are isomorphic to each other.
As a consequence, $\OKal(n)$ also has two isomorphic irreducible components $\OKal^\pm(n)$ whose intersection is $\OKal(n - 2)$.
For each of the irreducible components,
\begin{equation*}
  \codim_{\so(V)} \OKal^\pm(n) = nr + {r \choose 2}.
\end{equation*}
The set theoretic equations from Theorem~\ref{t:defn-eq} define $\OKal(m)$; the additional equations describing the irreducible components $\OKal^\pm(m)$ can be obtained by adapting \cite[Proposition~5.8]{lor2023:singularities}.

The singular and normal loci coincide and we have
\begin{equation*}
  \nnormal(\OKal(n)) = \sing(\OKal(n)) = \OKal(n - 2).
\end{equation*}
Thus, in type D, the conjectural long exact sequence resembles
\begin{multline*}
  0 \to
  \O_n \to
  \widetilde\O_n \to
  \widetilde\O_{n - 2}(-1) \to
  \dots \\ \dots \to
  \widetilde\O_m(-(n - m)(n-m-1)+1) \to
  \dots \to
  \widetilde\O_0(- n(n-1)+1) \to
  0
\end{multline*}
where $\O_m$ and $\widetilde \O_m$ have the usual meaning.

\bibliographystyle{alphaurl}
\bibliography{isotropic-kalman-refs}
\end{document}